\documentclass[10pt]{amsart}
\usepackage{commath}
\usepackage{stmaryrd}
\usepackage{xcolor}
\usepackage[hidelinks]{hyperref}
\newcommand{\N}{\mathbb{N}}

\newcommand{\C}{\mathbb{C}}
\newcommand{\R}{\mathbb{R}}

\newcommand{\Ha}{\mathbb{H}}
\newcommand{\Ho}{\mathbb{H}^3}
\newcommand{\BB}{\mathbb{B}}

\newcommand{\F}{\mathcal{F}}
\newcommand{\tr}{\mathrm{tr}}

\newcommand{\End}{\mathrm{End}}

\newcommand{\re}{\mathrm{Re}}
\newcommand{\T}{\mathcal{T}}
\newcommand{\M}{\mathcal{M}}
\renewcommand{\S}{\mathcal{S}}
\renewcommand{\sf}{\mathrm{sf}}
\newcommand{\sk}{\mathrm{sk}}
\newcommand{\SL}{\mathrm{SL}}
\renewcommand{\sl}{\mathfrak{sl}}
\newcommand{\B}{\mathcal{B}}
\renewcommand{\O}{\mathcal{O}}
\newcommand{\id}{\mathrm{id}}

\newcommand{\Teich}{\mathcal{T}}
\newcommand{\SLC}{\mathrm{SL}_2(\mathbb{C})}

\newcommand{\slc}{\mathfrak{sl}_2(\mathbb{C})}
\newcommand{\slo}{\mathfrak{sl}_2}
\newcommand{\Syst}{\mathcal{S}}
\newcommand{\Ne}{\mathcal{N}}
\newcommand{\Rep}{\mathcal{R}}
\newcommand{\RH}{\mathrm{RH}}
\newcommand{\dett}{\mathbf{det}}

\newcommand{\wt}{\widetilde}
\newcommand{\wh}{\widehat}

\newcommand{\QD}{\operatorname{QD}}
\newcommand{\QDD}{\mathcal{QD}}
\newcommand{\Belt}{\operatorname{Belt}}
\newcommand{\NAH}{\mathrm{NAH}}
\renewcommand{\S}{\mathcal{S}}
\renewcommand{\sf}{\mathrm{sf}}
\renewcommand{\sl}{\mathfrak{sl}}
\renewcommand{\O}{\mathcal{O}}
\newcommand{\can}{\mathrm{can}}
\newcommand{\Ad}{\mathrm{Ad}}
\newcommand{\ur}{\mathrm{reg}}
\newcommand{\Mod}{\mathrm{Mod}}
\newcommand{\df}{\mathrm{df}}
\newcommand{\Orb}{\mathrm{Orb}}
\newcommand{\Real}{\mathrm{Re}}

\newtheorem{theorem}{Theorem}[section]

\newtheorem{lemma}{Lemma}[section]
\newtheorem{claim}{Claim}[section]
\newtheorem{definition}{Definition}[section]

\newtheorem{proposition}{Proposition}[section]
\newtheorem{remark}{Remark}[section]

\begin{document}
\title{Holomorphic curves in compact quotients $\SLC/\Lambda$}
\author{Yiran Lin and Vladimir Markovi\'c}
\address{ \newline  YMSC  \newline Tsinghua University  \newline China }

\today

\subjclass[2020]{Primary 34M03}

\begin{abstract} We prove that every 
discrete faithful representation $\rho:\pi_1(\Sigma)\to \SLC$ is the monodromy of a holomorphic connection on the trivial rank-2 vector bundle over a Riemann surface. As an application, we answer the question posed by Ghys \cite{ghys} and  Huckleberry-Winkelmann \cite{h-w}  (known as the Margulis' problem) by proving that every compact quotient 
$\SL_2(\C)/\Lambda$ contains a holomorphic curve of genus $g\geq 2$. The main tools we use are the 
Non-Abelian Hodge correspondence, the WKB analysis, and the Morgan-Shalen compactification.
\end{abstract}

\maketitle

\section{Introduction} Let $\Sigma$ be a closed topological surface of genus at least two. By $\pi_1(\Sigma)$ we denote the corresponding surface group. Let 
$$
\Rep=\{ \rho\in \text{Hom}\big(\pi_1(\Sigma), \SLC \big)  \}  \sslash \SLC
$$
be  the character variety consisting of  representations of  $\pi_1(\Sigma)$ into $\SLC$ (if there is no confusion, we do not distinguish between a representation $\rho$ and its equivalence class in $\Rep$).
The subset  $\Rep^s\subset \Rep$ is defined as
$$
\Rep^s=\{ \rho\in \Rep: \,\, \text{$\rho$ is irreducible}\}.
$$
Let $\Teich=\Teich(\Sigma)$ be the Teichm\"uller space.  We let $\Omega^1(X)$ denote the space of Abelian differentials on $X\in \Teich$, and 
$\slo(\Omega^1(X))$  the space of traceless $2\times 2$ matrices of holomorphic 1-forms on $X$.  
The space of $\slo$-systems over $\Sigma$  is defined as
$$
\Syst=\{(X,A): X\in \Teich, \,\, A\in  \slo(\Omega^1(X)) \} \sslash \SLC
$$
where we identify pairs $(X,A)$ and $(X,B)$ if $A$ and $B$ are  conjugated by and element of $\SLC$.

Given $(X,A)\in \Syst$, we consider the trivial holomorphic bundle $X\times\C^2$ equipped with the holomorphic connection $\nabla=d+A$. Then $\nabla$ is a flat  connection inducing the monodromy representation 
$$
\rho_{(X,A)}:\pi_1(\Sigma)\to \SLC,
$$ 
which defines the monodromy map 
$$
\RH:\Syst\to\Rep
$$
by $\RH(X,A)=\rho_{(X,A)}$.  We define $\Syst^s \subset \Syst$ as
$$
\Syst^s=\{(X,A)\in \Syst:  \,\, \rho_{(X,A)}\,\, \text{is irreducible}  \}.
$$ 
Then we have the restricted map
$$
\RH:\Syst^s\to\Rep^s.
$$

\begin{remark}
Calsamiglia-Deroin-Heu-Loray  \cite{c-d-h-l}  proved that $\RH$ is a local diffeomorphism when $\Sigma$ is of genus two. This was later extended by Biswas-Dumitrescu \cite{b-d}, and Markovi\'c-To\v si\'c \cite{m-t}, where it was shown that $\RH$ is generically a local diffeomorphism for every $\Sigma$.
\end{remark}

\subsection{Discrete faithful representations}
It is a difficult problem to decide whether an irreducible representation is a holonomy of a $\slo$-system. Our principal result is that every discrete faithful representation is. 

\begin{theorem}\label{thm-main} Let $\rho:\pi_1(\Sigma)\rightarrow \SL_2(\C)$ be a discrete faithful representation. Then $\rho\in \RH(\Syst^s)$. Moreover,  the set $\RH^{-1}(\rho)$ projects onto a dense subset of  the moduli space $\M(\Sigma)$ of Riemann surfaces homeomorphic to $\Sigma$.
\end{theorem}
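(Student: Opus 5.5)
The plan is to realise $\rho$ as the monodromy of $d+A$ by working through the Non-Abelian Hodge correspondence and degenerating along rays of Higgs fields, as the abstract suggests. A holomorphic connection $d+A$ on the trivial bundle over $X$ corresponds, via its $\rho$-equivariant harmonic map $h_\rho:\wt X\to\Ho$, to a Higgs field $\Phi$ with $\det\Phi=-q$, where $q$ is the Hopf differential. The condition that $\rho$ is the monodromy of an $\slo$-system is that the associated flat bundle, with its $(0,1)$-part, be holomorphically trivial over $X$. Equivalently, the holomorphic structure $\bar\partial_\rho$ induced on $E_\rho=\wt X\times_\rho\C^2$ by the complex structure of $X$ must be the trivial bundle $\O\oplus\O$. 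Then a flat frame in which $\nabla=d+A$ exists, with $A$ holomorphic because $\nabla^{0,1}=\bar\partial$. So the goal is: for a fixed discrete faithful $\rho$, find $X$ (and show the set of such $X$ is dense in $\M(\Sigma)$) such that $(E_\rho,\bar\partial_X)$ is trivial.

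The key steps, in order, are as follows.

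\textbf{Step 1.} Reduce triviality to a cohomological or degree condition. $E_\rho$ has degree $0$, and a rank-2 degree-0 bundle with trivial determinant is trivial if and only if it is semistable with $h^0=2$. By Riemann--Roch, $h^0(E)=h^1(E)$ fails generically. So triviality is a codimension-type condition, and I would encode it as the vanishing of a "Tyurin-type" obstruction section over Teichm\"uller space. Concretely, I would use the map $X\mapsto (E_\rho,\bar\partial_X)\in$ moduli of bundles.

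\textbf{Step 2.} Handle the degenerations. Since $\rho$ is discrete faithful, the family of structures $(E_\rho,\bar\partial_X)$ over $\Teich$ is controlled by the harmonic maps to $\Ho$. I would move $X$ along a sequence $X_n$ going to infinity in $\Teich$ but projecting to a prescribed point of $\M(\Sigma)$ after applying mapping classes $\gamma_n$. Equivalently, fix $X$ and replace $\rho$ by $\rho\circ\gamma_n$, which diverges in the character variety. By the Morgan--Shalen compactification, after rescaling, $\rho\circ\gamma_n$ converges to an action on an $\R$-tree dual to a measured lamination. The corresponding Higgs fields $\Phi_n$ have $\|q_n\|\to\infty$ with $q_n/\|q_n\|\to q_\infty$ whose vertical foliation realises that tree.

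\textbf{Step 3.} Apply WKB analysis. For large Higgs fields, the flat connection $\nabla_n=D_{h_n}+\Phi_n+\Phi_n^*$ is approximated, away from zeros of $q_n$, by parallel transport that is diagonal in the eigenline frame of $\Phi_n$ and has exponentially growing or decaying periods $\exp(\pm\int\sqrt{q_n})$. From this I would compute the holomorphic structure $\nabla_n^{0,1}$ asymptotically. I expect it to split as $L_n\oplus L_n^{-1}$, with $L_n$ a degree-zero line bundle determined by the imaginary periods of $\sqrt{q_n}$ on the spectral cover, plus small corrections. Since these line bundles move around the Jacobian (or Prym) as $n$ varies, a degree/intersection argument produces nearby $X$ with $L=\O$ exactly. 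Stable triviality then follows by a transversality/open mapping argument in the Jacobian directions.

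\textbf{Step 4.} Obtain density and irreducibility. Density in $\M(\Sigma)$ comes from arbitrariness of the target point and of $\gamma_n$. Irreducibility is automatic since $\rho$ is discrete faithful.

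The main obstacle is Step 3: making the WKB approximation rigorous enough to control the holomorphic type of the bundle, not just the monodromy. I would first try to get uniform estimates on $h_n$ relative to the limiting configuration on the spectral curve. I would then show that the map to the Prym variety is a covering to leading order, so that its degree is nonzero.
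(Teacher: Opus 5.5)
\textbf{There is a genuine gap: Step 3 is where the theorem is proved, and it does not go through as written.} Your reformulation (find $X$ with $(E_\rho,\nabla^{0,1})$ holomorphically trivial) is correct. Steps 2 and 4 are essentially the paper's Section 2. You should add one point there: minimality of $\Mod(\Sigma)$ on projective measured foliations, together with the boundary homeomorphism of Daskalopoulos--Dostoglou--Wentworth, lets you steer $q_n/\|q_n\|$ into any open sector. In particular you can steer it into the generic set $\B''$ (simple zeros, $\dett$ unramified), which everything later needs.

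Step 3 fails in two ways.
\begin{itemize}
\item \emph{The heuristic is wrong.} The eigenline splitting of $\Phi_n$ exists only locally and is swapped by monodromy around the zeros of $q_n$, so there is no global $L_n\oplus L_n^{-1}$ on $X$. The natural asymptotic model is a push-forward $\pi_*\mathcal{L}$ of a line bundle on the spectral curve. For generic $q$, $\pi_*\mathcal{L}\cong\O\oplus\O$ happens only at the finitely many Prym points that correspond, via the spectral correspondence, to $\dett^{-1}(q)$, not at ``$L=\O$''.
\item \emph{No approximation can certify triviality.} Exact triviality is a positive-codimension condition, cut out by the jump $h^0\ge 2$. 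The trivial bundle is a limit of non-trivial ones (non-split extensions of $\O$ by $\O$ degenerating to the split one), so being $o(1)$-close to $\O\oplus\O$ in any sense proves nothing, and a degree or intersection count has nothing to grip. You would need $C^1$ control in $X$ of the holomorphic type of $(E_{\rho\circ\gamma_n},\nabla^{0,1}_X)$, uniformly as $n\to\infty$. What WKB analysis readily gives is only the exponential growth of monodromy traces ($\log|\chi|=t\,w+O(1)$). ``Transversality/open mapping in the Jacobian directions'' does not name a mechanism that closes this.
\end{itemize}

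\textbf{The missing idea is to replace this high-codimension closed problem by an open-and-closed one.} The paper parametrises candidates by pairs $(X,A)\in\S$, so triviality is built in. It then proves that $F=\NAH\circ\RH$ satisfies $F(\S_U)\supset\pi^{-1}(W(t))$ for $t$ large. Since the $\Mod(\Sigma)$-orbit of $\rho$ meets $\NAH^{-1}(\pi^{-1}(W(t)))$, this gives the theorem.

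Only coarse WKB information enters the base. Trace growth rates along a WKB curve (constructed via Birkhoff's ergodic theorem) give $4(\pi\circ F)(X,tA)=t^2\dett(A)(1+o(1))$, and a degree argument then covers $W(t)$.

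The fibres are covered by lifting paths through the local diffeomorphism $F$. The a priori estimate keeping $X$ within $O(t^{-1/2})$ of $X_0$ uses three ingredients absent from your plan:
\begin{itemize}
\item the bounded $g_{L^2}$-diameter of Hitchin fibres (Mazzeo--Swoboda--Weiss--Witt);
\item the Goldman-form identity $(\RH^*\xi)(v,w)=-\langle\mu,d(\dett)(\dot A_2)\rangle$, which converts an upper bound on the fibre derivative of $F$ into a lower bound on its horizontal derivative;
\item that upper bound itself, obtained from the Yau--Schwarz lemma for a $J$-K\"ahler metric of negative holomorphic sectional curvature modelled on the semiflat metric.
\end{itemize}
Your intuition that small changes of $X$ sweep out the Prym torus is exactly what this machinery makes rigorous.
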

\begin{remark} Let  $P:\Syst^s\to \M(\Sigma)$ be the map sending $(X,A)$ to the (unmarked) Riemann surface $X$. The second statement in Theorem \ref{thm-main} says that $P\big( \RH^{-1}(\rho) \big) $ is a dense subset of $\M(\Sigma)$.
\end{remark}  
\begin{remark} Until recently it was not known whether $\RH(\Syst^s)$ contained any discrete faithful representations. It was shown by Biswas-Dumitrescu-Heller-Heller in \cite{b-d-h-h} that $\RH(\Syst^s)$ contains at least one Fuchsian representation.
\end{remark}

\subsection{The Margulis' problem}  Let $\Lambda<\SLC$ denote a co-compact lattice, and consider the compact quotient $\SLC/\Lambda$. If $\Lambda$ is torsion free this quotient is diffeomorphic to a double covering of the  unitary frame bundle over the hyperbolic 3-manifold $\Ho/\Lambda$. Furthermore, $\SLC/\Lambda$ is a complex 3-fold which is not K\"ahler (and therefore not an algebraic complex 3-fold). It is natural to ask whether it contains complex subvarieties.

Huckleberry and Margulis  \cite{h-m} proved that compact 3-folds  $\SLC/\Lambda$  contain no complex hypersurfaces. On the other hand, they contain plenty  elliptic curves arising as quotients of 1-parameter subgroups in 
$\SLC$.  Ghys \cite{ghys}, and Huckleberry-Winkelmann \cite{h-w}, raised the question whether such quotients $\SLC/\Lambda$  contain higher genus curves (this has been termed  the Margulis' problem \cite{c-d-h-l}). 
\begin{remark} Recently,  Biswas-Dumitrescu-Heller-Heller \cite{b-d-h-h-1}  constructed an example of a  compact quotient $\SLC/\Lambda$  containing a holomorphic curve of genus two (in this example the induced monodromy is not faithful).
\end{remark}
Ghys \cite{ghys} reformulated the Margulis' problem into the  question as to whether there exists $X\in \Teich$, and an irreducible holomorphic connection on  the trivial holomorphic vector bundle $X\times\C^2$,  whose monodromy representation sends $\pi_1(\Sigma)$ into a given co-compact lattice  $\Lambda<\SLC$. Combining this with Theorem \ref{thm-main} and the Surface Subgroup Theorem from \cite{k-m}, we show that every compact quotient $\SLC/\Lambda$ contains many holomorphic curves of large genus. Moreover, the monodromy representation $\pi_1(\Sigma)\to \Lambda$ induced by these holomorphic curves is injective (in fact, it can be chosen to be quasifuchsian).

\begin{theorem}\label{thm-main-1}  Let $\Lambda\subset\SL_2(\C)$ be a co-compact lattice. Then if the genus of  
$\Sigma$ is sufficiently large  there exists $X\in \Teich$ admitting a non-constant holomorphic map $f:X\to \SL_2(\C)/\Lambda$. The collection of such Riemann surfaces $X$ forms a dense subset of the moduli space $\M(\Sigma)$.
\end{theorem}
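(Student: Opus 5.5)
We derive Theorem \ref{thm-main-1} from Theorem \ref{thm-main}, the Surface Subgroup Theorem of \cite{k-m}, and Ghys' reformulation. The plan has three steps.

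\emph{Step 1: a faithful surface subgroup in $\Lambda$.} We may pass to a torsion free finite index subgroup $\Lambda'<\Lambda$ (Selberg's lemma). The covering $\SLC/\Lambda'\to\SLC/\Lambda$ is holomorphic, so a non-constant holomorphic map into $\SLC/\Lambda'$ composes to one into $\SLC/\Lambda$. Then $M=\Ho/\Lambda'$ is (the quotient by $\pm 1$ of) a closed hyperbolic 3-manifold. The Surface Subgroup Theorem \cite{k-m} gives quasifuchsian subgroups of $\Lambda'$ isomorphic to $\pi_1(\Sigma)$. The issue is the genus. The theorem produces surfaces of infinitely many genera, but we need \emph{every} sufficiently large genus. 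To get this, take a finite cover of the surface. A genus $h$ surface covers a genus $g$ surface exactly when $g-1=d(h-1)$, which does not give all large $g$. The better route is that the Kahn--Markovi\'c construction (gluing good pants) gives surfaces of every sufficiently large even Euler characteristic: one can add or remove pairs of pants in the construction. Taking the construction's flexibility as given, we obtain a discrete faithful $\rho:\pi_1(\Sigma)\to\Lambda'\subset\SLC$ for each genus $g\ge g_0(\Lambda)$. This is the step whose precise genus claim depends on the input from \cite{k-m}, and I expect it to be the main point to check carefully.

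\emph{Step 2: realize $\rho$ by an $\slo$-system.} By Theorem \ref{thm-main}, there is $(X,A)\in\Syst^s$ with $\RH(X,A)=\rho$, and the set of such $X$ projects to a dense subset of $\M(\Sigma)$.

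\emph{Step 3: construct the curve (Ghys' reformulation).} Let $\wt X\to X$ be the universal cover. Solve $dF=-FA$, or the appropriate sign convention, for a holomorphic $F:\wt X\to\SLC$ whose columns are flat sections of $d+A$. Then $F(\gamma\cdot z)=F(z)\rho(\gamma)^{\pm1}$ for $\gamma\in\pi_1(\Sigma)$, after fixing conventions. Hence $F$ descends to a holomorphic map $f:X\to\SLC/\Lambda'$, since $\rho(\pi_1(\Sigma))\subset\Lambda'$. The map $f$ is non-constant: if it were constant, $F$ would be constant (a lift of a constant map is locally constant), so $A=0$ and $\rho$ would be trivial. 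This contradicts irreducibility, indeed faithfulness.

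Composing with the covering gives $f:X\to\SLC/\Lambda$, and density in $\M(\Sigma)$ is inherited directly from Step 2. The only genuinely nontrivial input beyond Theorem \ref{thm-main} is the genus bookkeeping in Step 1.
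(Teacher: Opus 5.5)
Your proposal is correct and follows the paper's proof essentially step for step: you pass to a torsion-free finite-index subgroup, take a quasifuchsian $\sigma:\pi_1(\Sigma)\to\Lambda$ from Kahn--Markovi\'c, realize it by Theorem \ref{thm-main}, and descend the equivariant fundamental solution $F$ (Lemma \ref{lemma-mt}) to $X\to\SLC/\Gamma\to\SLC/\Lambda$, with density inherited from Theorem \ref{thm-main}. The differences are minor: the paper gets surface subgroups of every sufficiently large genus by citing \cite{k-m} and \cite{k-m-1} rather than your heuristic of adding or removing pants, and your explicit non-constancy check is a small addition that the paper leaves implicit.
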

\begin{remark} The holomorphic maps $f:X\to \SL_2(\C)/\Lambda$ we construct induce a monomorphism $f_*:\pi_1(X)\to \Lambda$. 
\end{remark}

\begin{proof} After replacing $\Lambda$ by one its finite index subgroups if necessary, we may assume that $\Lambda$ has no torsion.  By the works of Kahn-Markovi\'c \cite{k-m} and \cite{k-m-1}, there is $g_0\ge 2$ such that  
if $\Sigma$ is of genus at least $g_0$ then there exists a quasifuchsian representation $\sigma:\pi_1(\Sigma)\to \Lambda$. Next, we construct a holomorphic map $f:X\to \SL_2(\C)/\Lambda$ such that the induced monomorphism between the fundamental groups $f_*:\pi_1(\Sigma)\to \Lambda$ agrees with 
$\sigma$.

Let $(X,A)\in \Syst^s$ be such that $\rho_{(X,A)}=\sigma$.  By Theorem \ref{thm-main} there are infinitely many such pairs $(X,A)$. Moreover, the collection of the corresponding Riemann surfaces $X$ is dense in $\M_g$. Let $\wt{X}$ denote the universal cover of $X$. The following lemma well known lemma was proved in the introduction to \cite{c-d-h-l} (see also Lemma 6 in \cite{m-t}).
\begin{lemma}\label{lemma-mt}
There exists a holomorphic map $F:\tilde{X}\to\SLC$ such that $dF+AF=0$, and
\begin{equation}\label{eq-mt}
F(\gamma x)=F(x)\sigma(\gamma)^{-1},\quad\quad \forall \gamma\in \pi_1(\Sigma).
\end{equation}
\end{lemma}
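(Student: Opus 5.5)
The plan is to obtain $F$ as a fundamental solution of the linear system $dF=-AF$ on the simply connected surface $\wt{X}$. Then I will read off the equivariance from the definition of monodromy, and use the freedom in choosing a representative of the conjugacy class to make the monodromy equal to $\sigma$ exactly.

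\textbf{Step 1: existence.} Pull $A$ back to $\wt{X}$; it is a traceless matrix of holomorphic $1$-forms. Since $\wt{X}$ has complex dimension one, $dA=0$ and $A\wedge A=0$ because both are $(2,0)$-forms. So the connection $d+A$ is flat. Fix a basepoint $x_0\in\wt{X}$. The equation $dF=-AF$ is then a holomorphic linear ODE along paths. Because the connection is flat and $\wt{X}$ is simply connected, the solution with $F(x_0)=I$ extends uniquely to a holomorphic map $F:\wt{X}\to GL_2(\C)$. Abel/Liouville gives $d(\det F)=-\tr(A)\det F=0$, so $\det F\equiv 1$ and $F$ takes values in $\SLC$.

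\textbf{Step 2: equivariance.} For $\gamma\in\pi_1(\Sigma)$ acting by deck transformations, $A$ is invariant under $\gamma$ because it is pulled back from $X$. Hence $x\mapsto F(\gamma x)$ solves the same equation. Any two solutions differ by right multiplication by a constant matrix: if $G$ also solves it, then $d(F^{-1}G)=-F^{-1}dF\,F^{-1}G+F^{-1}dG=F^{-1}AG-F^{-1}AG=0$. So $F(\gamma x)=F(x)M(\gamma)$ for a constant $M(\gamma)\in\SLC$. Evaluating $F(\gamma\delta x)$ in the two possible orders gives $M(\gamma\delta)=M(\delta)M(\gamma)$. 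Thus $\gamma\mapsto M(\gamma)^{-1}$ is a homomorphism, and it is precisely the monodromy representation of $\nabla=d+A$ with respect to the frame $F$ at $x_0$.

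\textbf{Step 3: normalization, the only delicate point.} The hypothesis $\rho_{(X,A)}=\sigma$ holds only in $\Rep$, that is, up to conjugation. Because $\sigma$ is irreducible, the GIT quotient $\sslash$ identifies points only within closed orbits. So $\gamma\mapsto M(\gamma)^{-1}$ is conjugate to $\sigma$: there is $C\in\SLC$ with $M(\gamma)^{-1}=C\sigma(\gamma)C^{-1}$. I will replace $F$ by $FC$, which still solves $dF+AF=0$ since right multiplication by a constant preserves the equation. Then
$$FC(\gamma x)=F(x)M(\gamma)C=F(x)C\sigma(\gamma)^{-1},$$
which is \eqref{eq-mt}. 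Here I must check that the convention for $\rho_{(X,A)}$ matches $M(\gamma)^{-1}$ and not $M(\gamma)$ or its transpose. If it does not, I will conjugate by the appropriate inverse, which only changes $C$.

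The step needing the most care is this last one: tracking the order conventions, and justifying that equality in the character variety yields an honest conjugacy. The latter uses the irreducibility of $\sigma$, which holds because $\sigma$ is quasifuchsian.
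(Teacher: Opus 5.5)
Your proposal is correct and is the standard argument: the paper gives no proof of its own but cites the introduction of \cite{c-d-h-l} and Lemma 6 of \cite{m-t}. There, as in your Steps 1--3, $F$ is the fundamental solution of $dF=-AF$ on $\wt{X}$, equivariance follows from uniqueness up to a constant right factor, and a constant right multiplication makes the monodromy equal to $\sigma$. The last step should be phrased more precisely: since $\sigma$ is irreducible it is a stable point, so its fibre in $\Rep$ is the single orbit $\SLC\cdot\sigma$ (equivalently, any representation with the same character as $\sigma$ is irreducible and hence conjugate to it); and because the possible conventions $M(\gamma)$, $M(\gamma)^{-1}$, $M(\gamma)^{T}$ all give the same $\SL_2$-character, no further convention bookkeeping is needed.
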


Now, let $\sigma \big( \pi_1(\Sigma) \big)=\Gamma<\Lambda$ be the corresponding quasifuchsian group. The equivariance condition (\ref{eq-mt}) means that $F$ descends to  the  quotient map  $X\to \SLC/\Gamma$. Composing this map with the  covering $\SLC/\Gamma \to \SLC/\Lambda$ yields the required holomorphic map $f:X\to \SLC/\Lambda$. 
\end{proof}

\subsection{The Higgs bundle parametrisation of $\Rep$}  In this paper we exploit the identification of the character variety $\Rep$ with the moduli space of Higgs bundles. This enables us to study the asymptotic behaviour of the Riemann-Hilbert map $\RH$ using the structures of the moduli space of Higgs bundles.

In the remainder  of this paper, we fix  a non-hyperelliptic $X_0\in \Teich$ (unless the genus of $\Sigma$ is  two  in which case we fix any $X_0\in \Teich$).
We let  $\M$  denote the moduli space of semistable $\SLC$-Higgs bundles $(E,\theta)$ on $X_0$. Here $E$ is a rank-2 holomorphic vector bundle with trivial determinant, and $\theta$  a traceless holomorphic section of the endomorphism bundle of $E$ twisted by the canonical bundle over $X_0$ (the section $\theta$ is called a Higgs field). 

The moduli space $\M$ is an irreducible, quasi projective variety. Its smooth locus is the open subset $\M^s$ parametrising stable Higgs bundles. By the non abelian Hodge theory there is a homeomorphism
\[\NAH:\Rep \to \M.\]
We define the map
\begin{equation}\label{eq-F}
F:\Syst\rightarrow\M
\end{equation}
by letting $F=\NAH\circ \RH$. Our efforts will be focused on studying the asymptotic properties of the map $F$.
The main step towards proving Theorem \ref{thm-main} is Theorem \ref{thm-3} which says that the image of $F$ contains a generic  Higgs bundle $(E,\theta)\in \M^s$ of sufficiently large determinant.

\subsection{The Hitchin fibration} We let $\QD(X)$ denote the vector space of holomorphic quadratic differentials on $X\in \Teich$. In the special case $X=X_0$, we introduce the notation
$$
\B \stackrel{\text{def}}{=} \QD(X_0).
$$
The Hitchin's fibration \begin{equation}\label{eq-pi}
\pi:\M\rightarrow\B
\end{equation}
takes a Higgs bundle $(E,\theta)$ to the determinant of $\theta$.
The following classical result was proved by Hitchin \cite{hitchin}.
\begin{theorem}\label{thm-hitchin}
The map $\pi$ is proper, surjective,  and is an abelian fibration over the open set
\[\B'=\left\{\phi\in\B : \, \phi\text{ has only simple zeros}\right\}.\]
Denote by $\M'=\pi^{-1}(\B')$. Then $\M'\subset\M^s$.  
\end{theorem}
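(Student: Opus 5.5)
This is Hitchin's classical theorem, so the plan is to reconstruct his argument in three parts: properness, surjectivity, and the abelian fibration over $\B'$ via spectral curves. The inclusion $\M'\subset\M^s$ will come out of the spectral analysis along the way.

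\textbf{Properness.} I would use the energy functional $\|\theta\|^2_{L^2}=2i\int_{X_0}\tr(\theta\wedge\theta^{*_h})$, computed with respect to the harmonic metric $h$ supplied by non-abelian Hodge theory. Uhlenbeck compactness together with the Hitchin equations $F_h+[\theta,\theta^{*_h}]=0$ show that this functional is proper on $\M$. The step I expect to be the main obstacle is the \emph{a priori} estimate bounding $\|\theta\|_{L^2}$ in terms of $\det\theta$. I would get it from the Bochner-type inequality $\Delta\log|\theta|^2\ge c\,|\theta|^2-C$ away from the zeros of $\det\theta$, and then apply the maximum principle to conclude $\sup|\theta|^2\le C(1+\|\det\theta\|)$. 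An alternative route to properness is to use the $\C^*$-action $\theta\mapsto t\theta$, which acts with weight two on $\B$: then $\pi$ is $\C^*$-equivariant with positive weights, and properness follows from the compactness of the nilpotent cone $\pi^{-1}(0)$. I would take whichever of the two routes is cleaner.

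\textbf{Surjectivity.} Since $\pi$ is proper, its image is closed. It is also conic, because $\pi(E,t\theta)=t^2\pi(E,\theta)$. Consequently it suffices to produce preimages of a dense open set of differentials. I would do this by taking $\phi\in\B'$ and constructing a Higgs bundle explicitly, either from the spectral construction below or, more directly, from the Hitchin section $E=K^{1/2}\oplus K^{-1/2}$ with $\theta=\begin{pmatrix}0&\phi\\ 1&0\end{pmatrix}$ (after adjusting signs), which satisfies $\det\theta=-\phi$.

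\textbf{Abelian fibration over $\B'$.} For $\phi\in\B'$, I would consider the spectral curve $S_\phi=\{\lambda\in K: \lambda^2+\phi=0\}$. It is a smooth double cover of $X_0$, branched at the $4g-4$ simple zeros of $\phi$, so it has genus $4g-3$. By the BNR correspondence, pushing forward line bundles $L$ on $S_\phi$ gives Higgs pairs $(p_*L,\,p_*\lambda)$. Imposing $\det p_*L=\O$ cuts this down to a torsor over the Prym variety $\ker(\mathrm{Nm}:\mathrm{Jac}(S_\phi)\to\mathrm{Jac}(X_0))$, which is an abelian variety of dimension $3g-3=\dim\B$. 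This identifies $\pi^{-1}(\phi)$ with a Prym torsor, and checking local triviality in $\phi$ gives the abelian fibration structure on $\M'$.

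\textbf{Stability on $\M'$.} Finally, because $S_\phi$ is irreducible (as $\phi$ has simple zeros, it is not a square), no line subbundle of $E$ can be $\theta$-invariant. Any such subbundle would produce a section of $S_\phi\to X_0$, which cannot exist. Hence every Higgs bundle in $\M'$ is stable, which gives $\M'\subset\M^s$.
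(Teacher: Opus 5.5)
Your plan is correct and is the standard proof of this theorem (essentially Hitchin's): properness of the $L^2$-energy plus a maximum-principle bound on $|\theta|_h$ in terms of $\det\theta$, the Hitchin section for surjectivity, spectral curves and Prym varieties over $\B'$, and stability from the irreducibility of $S_\phi$; the paper itself gives no argument and simply cites \cite{hitchin}. Use your first properness route and state its inequality more precisely. The Bochner inequality holds wherever $\theta\neq 0$, not only off the zeros of $\det\theta$, because for $\mathfrak{sl}_2$ one has $|[\theta,\theta^{*_h}]|^2\ge 2(|\theta|^2-2|\det\theta|)^2$ pointwise, and the maximum principle needs it at the maximum point, which may be a zero of $\det\theta$. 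The $\C^*$-shortcut is not self-contained. An equivariant map with compact zero fibre need not be proper (e.g.\ the inclusion $V\setminus\{0\}\hookrightarrow V$ for a vector space $V$ with the scaling action) unless you already know that $\lim_{t\to 0}(E,t\theta)$ exists in $\M$. That existence is normally deduced from properness of the energy.
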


\subsection{The determinant map} Let  $X\in \Teich$. Each $A\in \sl_2(\Omega^1(X))$ is of the form
 $$
 A=\begin{bmatrix}
            \alpha & \beta\\
            \gamma & -\alpha
        \end{bmatrix}\ 
$$        
for some  $\alpha,\beta,\gamma\in \Omega^1(X)$. The map
$$
\dett:\sl_2(\Omega^1(X)) \sslash \SL_2(\C)\rightarrow \QD(X)
$$
is given by $\dett(A)=-\alpha^2-\beta\gamma$.
\begin{definition} Let $X\in \Teich$. We say that $W\subset \QD(X)$ is a dilation invariant subset if 
$$
\phi \in W\quad \implies \quad t\phi \in W, \,\, \text{for every $t>0$}.
$$
\end{definition}
The following lemma follows from the first principles.  It is proved in Section \ref{section-appendix}.  
\begin{lemma}\label{lemma-elem} There is a Zariski open and dense, and dilation invariant, subset 
$\B^{\ur} \subset \B$ such that the  map
\begin{equation}\label{eq-det-0}
\dett:\sl_2(\Omega^1(X_0)) \sslash \SL_2(\C)\rightarrow \B
\end{equation}
is a finite unramified covering map over $\B^{\ur}$. 
\end{lemma}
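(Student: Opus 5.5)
We view the domain as the GIT quotient $V\sslash G$, where $V=\sl_2(\Omega^1(X_0))\cong \sl_2\otimes \C^g$, $g$ is the genus, and $G=\SLC$ acts by conjugation. Since $V\cong \C^{3g}$ has dimension $3g$ and $\dim \B=3g-3$, both $V\sslash G$ (dimension $3g-3$, generic stabiliser $\pm 1$ for $g\ge 2$) and $\B$ are affine varieties of the same dimension. Both are cones, and the map $\dett$ is homogeneous of degree $2$ with respect to the scalings $A\mapsto tA$ and $\phi\mapsto t^2\phi$. The plan is to prove three things: $\dett$ is dominant, hence generically finite; $\dett$ is finite (proper) everywhere, not merely generically; and finally we remove the branch locus to get $\B^{\ur}$.

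\textbf{Step 1 (finiteness).} Since the map is homogeneous between cones, a standard graded-algebra argument applies: it is finite as soon as the fibre over $0$ in the quotient is just the cone point. So it suffices to show that $\dett(A)=0$ forces $A$ to be in the nullcone, i.e.\ all $G$-invariants of $A$ vanish. The invariants are generated by traces $\tr(A_{i_1}\cdots A_{i_k})$, where $A=\sum A_i\omega_i$ and $\omega_1,\dots,\omega_g$ is a basis of $\Omega^1(X_0)$. Suppose $\det A=0$ as a quadratic differential. Then the matrix $A(x)$ is nilpotent at each point $x\in X_0$. Its kernel defines a line subbundle $L\subset X_0\times \C^2$ (after saturating) with $A(L)\subset L\otimes K$, and nilpotency gives $A(L)=0$ generically.

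We want to conclude that $A$ is simultaneously strictly upper triangular for a constant flag. This holds if the kernel line is constant. Here we use that $X_0$ is non-hyperelliptic. Write the pointwise kernel as $[\beta:-\alpha]$ (or $[-\alpha:\gamma]$). The relation $\alpha^2+\beta\gamma=0$ in $\mathrm{Sym}^2\Omega^1$ is a quadric relation among $\alpha,\beta,\gamma$. By Max Noether, the map $\mathrm{Sym}^2\Omega^1\to \QD$ has kernel spanned by the quadrics containing the canonical curve. We then argue that the three forms span at most a pencil inside which $\alpha^2+\beta\gamma$ factors, forcing $\alpha,\beta,\gamma$ to be proportional to a rank-one nilpotent pattern with constant kernel. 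Concretely, if $\alpha,\beta,\gamma$ span a space of dimension $\le 2$, the quadric restricted to that space is degenerate, and one checks directly that $A$ is conjugate to a constant nilpotent times a form. The main obstacle is ruling out the case where the span has dimension $3$ and the rank-3 quadric $\alpha^2+\beta\gamma$ contains the canonical curve. The canonical curve of a non-hyperelliptic curve does not lie in a rank-3 quadric cone with a vertex of codimension $3$ unless it maps $2{:}1$ to a conic, i.e.\ unless the curve is hyperelliptic. A rank-3 quadric vanishing on the canonical image means the curve maps to a conic via $(\alpha:\beta:\gamma)$, and the pullback of $\mathcal O(1)$ from the conic, which is half of the canonical system restricted to this net, gives a $g^1_d$ whose square is a subsystem of $K$. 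Dimension counting, via Clifford's theorem, then forces hyperellipticity. In genus $2$ we check the claim directly instead. With the kernel constant, all trace invariants vanish, and the nullcone statement follows.

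\textbf{Step 2 (dominance).} A finite map between irreducible affine varieties of equal dimension is surjective, so dominance is automatic once we verify $\dim V\sslash G=3g-3$, using that a generic tuple $(A_1,\dots,A_g)$ with $g\ge 2$ is irreducible with stabiliser $\pm1$.

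\textbf{Step 3 (unramified locus).} Over $\C$ a finite dominant morphism is generically étale. Let $R\subset \B$ be the union of three closed sets: the branch locus, the image of the singular locus of $V\sslash G$ (the reducible points), and the locus where the fibre cardinality drops. Each is a proper Zariski closed subset. Each is invariant under $\C^*$ by homogeneity, in particular under dilations $t>0$. We set $\B^{\ur}=\B\setminus R$. Over $\B^{\ur}$ the map is finite, étale and of constant degree, so it is a finite unramified covering in the analytic topology.
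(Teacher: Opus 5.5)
Your proposal has a genuine gap, in Step 1. You claim that a non-hyperelliptic canonical curve lies on no rank-$3$ quadric, and this is false once $g\ge 4$. By your own reduction, such a quadric corresponds to a pencil $L$ (the pullback of $\O(1)$ from the conic) with $K_{X_0}-2L$ effective. Clifford's theorem only excludes $\deg L=2$. Pencils with $3\le \deg L\le g-1$ and $K_{X_0}-2L\ge 0$ exist on many non-hyperelliptic curves, for instance:
a genus-$4$ curve with a vanishing theta-null ($L=g^1_3$, $2L=K_{X_0}$), or
any trigonal curve of genus $5$ (there $h^0(K_{X_0}-2L)\ge 1$ by Riemann--Roch).

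Explicitly, let $s,t$ span $H^0(L)$ and $0\ne w\in H^0(K_{X_0}-2L)$. Then $\alpha=stw$, $\beta=s^2w$, $\gamma=-t^2w$ are linearly independent holomorphic $1$-forms with $\alpha^2+\beta\gamma=0$. So $A=\begin{bmatrix}\alpha&\beta\\ \gamma&-\alpha\end{bmatrix}$ has $\dett(A)=0$, but its pointwise kernel $[s:-t]$ is not constant, and its coefficient matrices span $\sl_2(\C)$. That tuple is irreducible, its orbit is closed, and some $\tr(A_iA_j)\neq 0$. Hence the fibre $\dett^{-1}(0)$ contains the infinitely many distinct points $[cA]$, $c\in\C$. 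For such $X_0$ the map $\dett$ is therefore not a finite morphism. The graded Nakayama argument has nothing to apply to, and Steps 2 and 3 lose their foundation, since they rely on surjectivity deduced from finiteness and on images of closed bad loci being closed.

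The lemma is still true, but for an arbitrary non-hyperelliptic $X_0$ it cannot come from global finiteness. You only need finiteness over a dense open set, and dominance has to be proved directly. This is the paper's route:
\begin{enumerate}
\item By Max Noether there are $\alpha,\beta,\gamma$ with $\alpha\Omega^1(X_0)+\beta\Omega^1(X_0)+\gamma\Omega^1(X_0)=\QD(X_0)$.
\item At that point $d(\dett)=-\tr(A\,\cdot\,)$ is surjective, so the image of $\dett$ contains an open set and $\dett$ is dominant.
\item Chevalley's generic finiteness plus Zariski's Main Theorem (with generic smoothness in characteristic zero) then give a Zariski open dense set over which $\dett$ is finite unramified.
\item Finally, saturate that set under $\phi\mapsto c\phi$, $c\in\C^*$.
\end{enumerate}

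Your nullcone argument does work in two cases:
for $g\le 3$ under the standing hypothesis, because there $\mathrm{Sym}^2\Omega^1(X_0)\to\QD(X_0)$ is injective, so $\dett(A)=0$ forces the span of the coefficient matrices to be isotropic for the trace form, hence contained in a nilpotent line;
more generally, when $X_0$ has injective Petri map, because a pencil with $K_{X_0}-2L\ge 0$ gives the nonzero Petri kernel element $s\otimes tw-t\otimes sw$.
Both are stronger than the hypothesis in the lemma.
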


\begin{definition}\label{definition-''}  Set
\[\B''=\B^{\ur}\cap\B'=\left\{\phi\in\B^{\ur}:\,\phi\text{ has only simple zeros}\right\}.\]
\end{definition}
\begin{lemma}\label{lemma-B''}
The set $\B''$ is a dilation invariant,  Zariski open and dense, subset of $\B$.
\end{lemma}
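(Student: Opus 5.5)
Since $\B''=\B^{\ur}\cap\B'$, the plan is to check the two required properties for each factor separately and then observe that both properties pass to finite intersections.

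\textbf{Dilation invariance.} $\B^{\ur}$ is dilation invariant by Lemma \ref{lemma-elem}. For $\B'$, note that for $t>0$ the quadratic differential $t\phi$ has exactly the same zeros, with the same multiplicities, as $\phi$. Hence $\phi\in\B'$ implies $t\phi\in\B'$. An intersection of two dilation invariant sets is dilation invariant, so $\B''$ is.

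\textbf{Zariski openness.} $\B^{\ur}$ is Zariski open by Lemma \ref{lemma-elem}. For $\B'$, I would identify $\B=\QD(X_0)$ with a finite-dimensional complex vector space, of dimension $3g-3$ by Riemann--Roch. The complement $\B\setminus\B'$ is the set of $\phi$ having a zero of order at least two. This is the projection to $\B$ of the incidence variety
\[
\{(\phi,p)\in\B\times X_0:\ \phi \text{ vanishes to order } \ge 2 \text{ at } p\}.
\]
In local coordinates the incidence variety is cut out by the vanishing of the $0$-jet and $1$-jet of $\phi$ at $p$. These conditions are linear in $\phi$ and holomorphic in $p$, so the incidence variety is closed analytic in $\B\times X_0$. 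Because $X_0$ is compact, the projection is proper. Since $\B$ is affine and $X_0$ is projective, the set is in fact algebraic, and its image under the proper projection is Zariski closed. (Equivalently, $\B\setminus\B'$ is the zero locus of a discriminant.) Therefore $\B'$ is Zariski open.

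\textbf{Nonemptiness.} This is the only step that is not formal. Because $\B$ is irreducible (a vector space), a nonempty Zariski open subset is automatically dense, and the intersection of two nonempty Zariski open sets is again nonempty and open. So it suffices to show $\B'\neq\emptyset$. Hitchin's Theorem \ref{thm-hitchin} already presupposes that $\B'$ is a nonempty open set, and I would simply cite that. A direct argument would use that $K^2$ is very ample when $g\ge2$, since $\deg K^2=4g-4\ge 2g+1$ whenever $g\ge 2$. Then $|K^2|$ embeds $X_0$ into $\mathbb P(\B^*)$, and by Bertini a generic hyperplane section is transverse to the image curve. A transverse hyperplane section corresponds exactly to a quadratic differential with only simple zeros.

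Combining these steps, $\B''$ is a nonempty Zariski open subset of the irreducible space $\B$, hence Zariski open and dense, and it is dilation invariant.
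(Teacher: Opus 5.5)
Your argument is correct and is essentially the paper's: the paper simply observes that $\B^{\ur}$ and $\B'$ are each dilation invariant, Zariski open and dense, and then intersects them, while you also justify the Zariski openness and nonemptiness of $\B'$, which the paper takes for granted. One slip is in your optional direct argument: the inequality $4g-4\ge 2g+1$ fails for $g=2$, where $|K^2|$ is the hyperelliptic double cover onto a conic rather than an embedding. In that case use only base-point-freeness ($4g-4\ge 2g$) together with Bertini for base-point-free linear systems, or directly take $q(\omega_1,\omega_2)$ with $q$ a binary quadratic form whose two roots are distinct and are not branch values of the hyperelliptic map.
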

\begin{proof} By construction both $\B'$ and $\B^{\ur}$ are dilation invariant,  Zariski open and dense, subsets of $\B$. Thus, $\B''$ is as well.
\end{proof}

\subsection{Higgs fields with large determinant} 

\begin{definition} For a dilation invariant subset $W\subset\B$ we define
$$
W(t)=\left\{\phi\in W:\, \|\phi\|>t\right\},
$$
where  $\|\phi\|=\int_{X_0} |\phi|$.
\end{definition}
For a subset $U\subset \Teich$, we let 
$$
\Syst_U=\left\{(X,A)\in \Syst : \, X\in U \right\}. 
$$
The following result underpins the proof of Theorem \ref{thm-main}
\begin{theorem}\label{thm-3} Suppose $W\subset \B''$ be a closed dilation invariant sector. Let  $U$ be an open neighbourhood of $X_0$ in $\T$. Then there exists $t>0$ such that $F(\Syst_U)$ contains $\pi^{-1}(W(t))$.
\end{theorem}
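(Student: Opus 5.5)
We will prove Theorem \ref{thm-3} by a degree argument applied to $F$ over a large sphere in each Hitchin fibre direction. The plan has three steps: describe $F$ asymptotically, build a comparison map that is clearly surjective, and deform one into the other.

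\textbf{Asymptotic model.} For $(X,A)$ with $X$ near $X_0$ and $\dett(A)=\phi$ large, we write $A=R\,A_1$ with $R\to\infty$ and $\|\dett A_1\|=1$. The monodromy of $d+RA_1$ is then governed by WKB analysis along the spectral curve $\{\lambda^2=\dett A_1\}$. Away from the zeros of $\dett A_1$, the flat sections grow like $\exp(\pm R\int\sqrt{\dett A_1})$. This describes the Morgan--Shalen limit of $\RH(X,RA_1)$: the limiting length function, or $\R$-tree, is the one dual to the vertical foliation of the quadratic differential $\dett A_1$ on $X$.

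On the Higgs side, Mochizuki's asymptotic decoupling (or the limiting configuration results of Mazzeo--Swoboda--Weiss--Witt) identifies the Morgan--Shalen limit of $\NAH^{-1}(E,t\theta)$, for $\det\theta\in\B'$. It is the tree dual to the vertical foliation of $\det\theta$ on $X_0$, again with length $\propto t$. Comparing the two, we expect
\[
\pi\big(F(X,A)\big)\approx c\,\dett(A)^{\sharp}
\]
to leading order. Here $\sharp$ denotes transport of the measured foliation from $X$ back to $X_0$ via the Hubbard--Masur map, and $c$ is a normalising constant. The second-order (phase) information in WKB should likewise recover the point in the abelian fibre, which is a Prym variety coordinate given by the periods of $\sqrt{\phi}$.

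\textbf{Comparison map.} We fix $\phi_0\in W$ and consider the map
\[
G:\ \{(X,A): X\in U,\ \dett A\in \text{a neighbourhood of } \R_{>0}W\}\to \M .
\]
Lemma \ref{lemma-elem} makes $\dett$ a finite covering over $\B''$. Together with the Hubbard--Masur theorem, this shows that the pairs (vertical foliation class, $X$) sweep out a neighbourhood of the whole measured foliation data of $W$ as $X$ ranges over $U$. The key point is dimension counting. The space $\Syst_U$ has complex dimension $(3g-3)+3g-3$ up to the covering, which equals $\dim\M=6g-6$. The $3g-3$ parameters of $X\in U$ move the real foliation data, and the $3g-3$ parameters of $A$ move $\dett A$. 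We then argue that $F$ restricted to a large compact slab
\[
K_t=\{(X,A): X\in \overline{U'},\ \dett A\in W,\ t\le\|\dett A\|\le 2t\}
\]
is homotopic, through maps avoiding a fixed target point $p\in\pi^{-1}(W(t'))$ on the boundary, to an explicit map of nonzero degree. That explicit map is the asymptotic model built from the WKB periods, which is a local homeomorphism onto its image, namely a torus bundle over the sector. Nonzero degree then gives surjectivity onto $\pi^{-1}(W(t))$ after shrinking the sector slightly. We may replace $W$ by a slightly larger closed sector inside $\B''$, which is allowed since $\B''$ is open.

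\textbf{Main obstacle.} The crux is the uniform error estimate. We need $F(X,A)$ to be $o(1)$-close, in the appropriate rescaled coordinates on $\M$ (base plus abelian fibre coordinates), to the WKB model, uniformly on the boundary of $K_t$ as $t\to\infty$. This is where closedness of $W\subset\B''$ matters: the zeros of the differentials stay simple and uniformly separated, and $\dett$ stays unramified, so the Stokes data near turning points are uniformly controlled. We will first try to prove this by exact WKB with Airy-type local models near the simple zeros. The abelian fibre coordinates are the most delicate part, since they are subleading in $R$. For them we plan to use the fact that both the Hitchin-side and connection-side holonomies along lifts of cycles on the spectral cover have explicit leading phases, and to check that these agree up to $o(1)$.
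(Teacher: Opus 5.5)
There is a genuine gap, and it sits exactly at the step you flag as the main obstacle. Your degree argument needs $F$ to be uniformly close on $\partial K_t$ to an explicit model \emph{in the abelian fibre coordinates}. None of the inputs you invoke supply this. Lemmas \ref{lemma: WKB-0} and \ref{lemma: WKB-1}, and the Morgan--Shalen description built on them, control only $t^{-1}\log|\chi_\rho(\gamma)|$. That quantity pins down $\pi\circ F$ to leading order but is blind to the point of the Prym fibre. Matching the subleading phases of $\RH(X,A)$ and $\NAH^{-1}(E,\theta)$ uniformly in $X$, along enough cycles to fix all $6g-6$ real torus coordinates, is announced rather than carried out, and it is the entire content of the proof. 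Your parameter count is also misassigned. For fixed $X$ the $A$-directions already move the base, since $4(\pi\circ F)(X,A)\approx L(\dett A)$ by Proposition \ref{proposition: WKB}. It is the $X$-directions, compensated by $A$, that sweep the torus fibre.

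Even granting $o(1)$ accuracy, the boundary step fails as set up. Fibres have bounded $g_{L^2}$-diameter (Lemma \ref{lemma: diam bound}). But by Lemma \ref{lemma: horizontal lower bound}, any path in $\dett^{-1}(V)$ that displaces $X$ by Teichm\"uller distance $\delta$ has $F$-image of $g_{L^2}$-length at least of order $\sqrt t\,\delta$. So the part of $F(\partial K_t)$ lying over $\pi(p)$ is essentially the image of $\partial U'$ under a map expanding by a factor of order $\sqrt t$ into a torus of bounded size. There is no reason for this set to stay a definite distance from $p$, so an $o(1)$-accurate model cannot keep the homotopy off $p$. To repair this you would have to localise $X$ at scale $t^{-1/2}$, lift to the universal cover of the fibre, and know $\sqrt t\,d_XF$ asymptotically. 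Note also that $F$ is a $J$-holomorphic local diffeomorphism on $\dett^{-1}(V)$, so all its local degrees are $+1$; a degree count is therefore no easier than properness.

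The paper obtains properness without any fibre asymptotics. Leading-order WKB gives one point of $F(\Syst_U)$ in each fibre over $W(t)$ (Theorem \ref{thm-3 weak-1}). The paper then lifts a path of length at most $D$ inside the fibre through the local diffeomorphism $F$ (Lemma \ref{lemma-elem-1}). The key estimate is $\sqrt{\|\dett A\|}\,\|[\mu]\|_*\le C\|dF\|_{g_{L^2}}$. It comes from the Goldman-form identities (Propositions \ref{proposition: symplectic} and \ref{proposition-symplectic}) together with the Yau--Schwarz fibre bound (Proposition \ref{proposition: bound on fiber}). This forces $d_{\Teich}(X(s),X_0)\le C/\sqrt t$ along the lift, and Proposition \ref{proposition: WKB} then keeps the lift in a compact part of $\dett^{-1}(V)$ (Lemma \ref{lemma-dosta}). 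Only an inequality on $dF$ is needed, never a formula for the Prym coordinate.
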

\begin{remark} When we say that $W\subset \B''$ is a closed subset we mean that it is a subset of $\B''$ which is closed as a subset of $\B\setminus \{0\}$.
\end{remark}

\subsection{The strategy}

Theorem \ref{thm-main} follows from Theorem \ref{thm-3}. Namely, given any discrete faithful representation 
$\sigma:\pi_1(\Sigma)\to \SLC$, one can precompose $\sigma$ by a suitable element of the mapping class group of $\Sigma$ so that the resulting representation falls into our favourite region  $\pi^{-1}(W(t))$. To do this we use  the results by Daskalopoulos-Dostoglou-Wentworth  in \cite{d-d-w} which relate the Morgan-Shalen compactification of  $\Rep$ with the  compactification of $\M$.

We briefly explain the strategy for proving Theorem \ref{thm-3}. We need to show that the image $F(\S^s)$ contains a large part of the moduli space $\M$. Good properties of the Hitchin's fibration  $\pi$ allow us to do this in steps. In the first step we show that $\pi\circ F$ contains a large part of $\B$, while in the second step we show that the corresponding fibres of $\pi$ are contained in $F(\S^s)$. 
\vskip .1cm
\noindent
\textbf{Step 1: Asymptotics of $\pi\circ F$.} A  key building block in our analysis is the result that the map 
$(\pi\circ F):\Syst^s \to \B$ is well approximated by the (much simpler) determinant  map. Namely, we show that for $A \in \sl_2(\Omega^1(X))$, with  $\dett(A)\neq0$, the equality 
$$
(\pi\circ F)(X,tA))=\frac{t^2}{4}\dett(A) (1+o(1)), \quad t \to \infty
$$ 
holds.
We prove this  using WKB analysis to estimate the monodromy around loops. The Hitchin WKB and Riemann-Hilbert WKB problems were studied by Katzarkov-Noll-Pandit-Simpson \cite{k-n-p-s}, Gaiotto-Moore-Neitzke \cite{g-m-n},  Mochizuki  \cite{mochizuki},  and others. Using this we  deduce the following weak version of Theorem \ref{thm-3}.

\begin{theorem}\label{thm-3 weak} Suppose $W\subset \B''$ is a closed  dilation invariant sector. Then there exists $t>0$ such that $(\pi\circ F)(\Syst_{X_{0}})$ contains $W(t)$. 
\end{theorem}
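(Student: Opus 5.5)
Theorem \ref{thm-3 weak} will follow from the WKB asymptotic formula stated above combined with a degree argument on the sphere of directions. Define $G:\sl_2(\Omega^1(X_0))\sslash\SLC\to\B$ by $G(A)=(\pi\circ F)(X_0,A)$; the map $G$ is continuous. The asymptotic statement, made uniform over compact sets of directions, reads
$$
G(tA)=\frac{t^2}{4}\dett(A)\bigl(1+o(1)\bigr),\qquad t\to\infty,
$$
uniformly for $A$ ranging over a compact set $K$ on which $\dett(A)\neq 0$ and $\dett(A)\in\B''$. So the first step is to upgrade the pointwise WKB estimate into one that is uniform on such $K$. The WKB analysis estimates the monodromy of $d+tA$ along loops in terms of periods of $\sqrt{\dett(A)}$ on the spectral curve, and the constants should depend continuously on $A$ as long as the zeros of $\dett(A)$ stay simple and separated. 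The same Higgs-side estimate (Katzarkov--Noll--Pandit--Simpson, Mochizuki) identifies $\det\theta$ of the Higgs bundle $\NAH(\rho)$ with the leading exponential growth rates of $\rho$, and the factor $1/4$ comes from matching these normalisations.

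Next comes the topological step. Fix the closed dilation invariant sector $W\subset\B''$ and pass to a slightly larger closed sector $W'\subset\B''$ whose interior contains $W\setminus\{0\}$ (this is possible since $\B''$ is open). By Lemma \ref{lemma-elem}, $\dett$ is a finite unramified covering over $\B^{\ur}\supset W'$. Over the contractible-in-angle region, or at least locally over small balls, we can choose a continuous local inverse $s$ of $\dett$ that is homogeneous of degree one in the appropriate sense: $s(\lambda^2\phi)=\lambda s(\phi)$ for $\lambda>0$, since $\dett$ is quadratic. Take $\phi_0\in W(t)$ and a small closed ball $D$ around $\hat\phi_0=\phi_0/\|\phi_0\|$ with $D\subset W'$, and normalise. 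Consider the map
$$
H_r(\psi)=\frac{4}{r^2}G\bigl(r\,s(\psi)\bigr),\qquad \psi\in D .
$$
By the uniform asymptotics, $H_r\to\id$ uniformly on $D$ as $r\to\infty$. A standard degree/Brouwer argument then shows that for $r$ large, $H_r(D)$ contains the concentric ball of half the radius. In particular $\hat\phi_0\cdot r^2/4$ lies in the image of $G$ restricted to $\{rs(\psi)\}$, so $\phi_0$ lies in the image once $\|\phi_0\|=r^2/4$ is large. Covering the compact set $W\cap\{\|\phi\|=1\}$ by finitely many such balls and taking the maximum of the resulting thresholds gives a uniform $t$.

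Finally, we must check that the solutions lie in $\Syst$ with irreducible monodromy if needed. For the weak theorem only membership in $\Syst_{X_0}$ is required, and this is automatic. The main obstacle is the uniform WKB estimate of the first step: one must control the monodromy of $d+tA$ uniformly in $A$ near the direction set. This requires choosing WKB (Stokes-adapted) paths that avoid the zeros of $\dett(A)$ uniformly, and showing that the resulting leading-order growth exponents determine $\det\theta$ of the harmonic-metric Higgs bundle up to a $o(t^2)$ error. My first attempt would be to derive the Higgs-side quantity from the spectral-curve periods via the KNPS limiting building, and then compare with the connection-side periods computed along spectral-network paths.
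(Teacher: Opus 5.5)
The topological half of your proposal is fine and matches the paper's argument for Theorem \ref{thm-3 weak-1}. That argument takes a local inverse branch of $\dett$ over a small ball around a norm-one point of $W$, rescales to get maps converging uniformly to the identity, applies the Brouwer-type Claim \ref{claim-aux}, and finishes by compactness of the unit sphere in $W$.

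The gap is the uniform asymptotic $(\pi\circ F)(X_0,tA)=\frac{t^2}{4}\dett(A)(1+o(1))$, which is Proposition \ref{proposition: WKB}. You correctly flag it as the main obstacle, but you do not prove it, and the route you sketch does not close.

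\begin{itemize}
\item On the Higgs side, the quantity to be determined, $\det\theta$ for $F(X_0,tA)$, is the unknown. You cannot choose Stokes-adapted paths for its spectral curve in advance.
\item KNPS limiting buildings describe a fixed Higgs field rescaled along a ray. Here the Higgs field itself varies with $t$.
\item More importantly, saying that the leading growth rates of $\rho$ ``identify'' $\det\theta$ is exactly what needs proof. WKB gives the exact growth $w_\phi(\gamma)$ only along curves transverse to the vertical foliation of one particular $\phi$, and only an upper bound along other curves. You therefore need a statement that these width functionals separate distinct quadratic differentials in a way compatible with this asymmetry.
\end{itemize}

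This is the missing idea, and the paper supplies it as Lemma \ref{lemma: finding curve}. If $\phi\neq\psi$ and $\|\phi\|\ge\|\psi\|$, there is a closed curve $\gamma$, WKB for $\phi$, with $w_\psi(\gamma)<w_\phi(\gamma)$. The curve is built as follows.
\begin{itemize}
\item Choose a small $\theta$ for which the horizontal foliation of $e^{i\theta}\phi$ is uniquely ergodic (Kerckhoff--Masur--Smillie).
\item Apply the Birkhoff ergodic theorem along a horizontal trajectory to $f_\theta=|\Real\sqrt{\psi/e^{i\theta}\phi}|$. Its $|\phi|$-average lies strictly below the $\phi$-width rate because $\psi\neq\phi$.
\item Close up a long such trajectory with a short, nearly horizontal arc.
\end{itemize}

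Uniformity is then obtained not by tracking constants continuously in $A$, but by contradiction (Lemma \ref{lemma-contra}).
\begin{itemize}
\item Write $(\pi\circ F)(X_n,t_nA_n)=s_n^2\phi_n$ with $\|\phi_n\|=1$, and pass to a subsequence with $\phi_n\to\phi_0$.
\item Apply Lemma \ref{lemma: WKB-0} and Lemma \ref{lemma: WKB-1}. These are stated for convergent sequences, and give $\limsup\le w$ for every closed curve and equality for WKB-curves.
\item Choose $\gamma$ WKB for $\phi_0$ or for $\dett(A_0)$, according to whether $2s_n/t_n$ is asymptotically $\ge 1$ or $\le 1$. The asymmetric comparison then forces $\phi_0=\dett(A_0)$.
\item A WKB-curve for $\dett(A_0)$ then gives $2s_n/t_n\to 1$.
\end{itemize}

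Without this separation lemma, the asymptotic formula your degree argument rests on remains unestablished. A possible substitute would be a Hubbard--Masur-type injectivity argument for the limiting length functions, carried out uniformly along sequences.
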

\vskip .1cm
\noindent
\textbf{Step 2: Surjectivity onto the fibres.} By Step 1 we already know that the image of $F$ contains at least one point on each fibre $\pi^{-1}(\phi)$ for every $\phi\in W(t)$. We use the continuity method to show that the image contains the whole fibre. This is motivated by the fact that $F$ is generically a local diffeomorphism. The key point here is that the fibres $\pi^{-1}(\phi)$  have a uniformly bounded diameters when $\phi\in W(1)$. This readily follows from the work of  Mazzeo-Swoboda-Weiss-Witt \cite{m-s-w-w} where they prove that the Hitchin metric $g_{L^{2}}$, and the semiflat metric $g_\sf$, are asymptotic to each other along rays $\M_{t\phi}$.

We connect two points on the same fibre by a path $\alpha$ of bounded length, and try to lift $\alpha$ to a curve $\beta$ in $\S$ so that $F\circ\beta=\alpha$. As always, in order to apply the continuity method the main difficulty is an a priori estimate. Here we need to show that $\beta$ is contained in a compact subset of $\S$. We prove this by estimating the derivative of $F$ from below, especially in the horizontal direction in $\Syst$ (varying the complex structure in $\Teich$).

A useful observation here is that $F^*\xi$ looks like ${\dett}^*\eta_{\can}$, where $\xi$ and $\eta_{\can}$ are, respectively, the Goldman symplectic form on the character variety, and the canonical symplectic form on the cotangent bundle $\QDD\cong T^*\Teich$ 
(here $\QDD$ is the bundle of holomorphic quadratic differentials over $\Teich$). Therefore a lower bound of $dF$ in the horizontal direction is the same as an upper bound of $dF$ in the fibre direction. So it remains to give an upper bound of $dF$ in the fibre direction. We achieve this using Yau's Schwarz lemma.

\subsection{Organisation of the paper}
In Section 2 we give the proof of Theorem \ref{thm-main} using Theorem \ref{thm-3} and the results 
from \cite{d-d-w} relating the Morgan-Shalen compactification of  $\Rep$ with the  compactification of $\M$. In Section 3 we state and prove some preliminary results mostly concerning the determinant map $\dett:\Syst\to \QDD$.

In Section 4 we recall  the results of Mochizuki \cite{b-d-h-h}, \cite{mochizuki}, showing that the map 
$(\pi\circ \F)$ is well approximated by the  determinant  map (we also sketch the proofs). In Section 5, using standard arguments from Teichm\"uller dynamics, we construct suitable $WKB$-curves required for the application of Mochizuki's results.   Using this, we prove Theorem \ref{thm-3 weak} in Section 6 and Section 7. 

The pullbacks of the Goldman symplectic form from $\Rep$ to $\Syst$, and $\M$, respectively, are computed in Section 8  (these results are  well known). Then in Section 9 we state 
Proposition \ref{proposition: bound on fiber} and Lemma \ref{lemma: diam bound}. The proposition provides an upper bound on the norm $\|dF\|_{g_L^{2}}$ in the fibre direction, where $g_{L^{2}}$ is the Hitchin metric on $\M'$.
The lemma provides the uniform upper bound on the diameter of the fibre $\pi^{-1}(\phi)$ when $\phi\in \B''$.
Combining these with the computations involving the Goldman symplectic form enables us to complete the proof Theorem \ref{thm-3} in Section 10.

Finally, Section 11 and Section 12 are devoted to proving Proposition \ref{proposition: bound on fiber} and Lemma \ref{lemma: diam bound}. The proof of Lemma \ref{lemma: diam bound} relies on the fact that the metric $g_{L^{2}}$  is asymptotic to the semiflat metric $g_\sf$ along  $\pi^{-1}(t\phi)$. On the other hand, Proposition \ref{proposition: bound on fiber} is proved using the Schwartz lemma with respect to a suitably constructed metric of negative holomorphic sectional curvature which we construct in a neighbourhood of $\pi^{-1}(t\phi)$.

\subsection*{Acknowledgement} The authors would like to thank the following people for discussions:  Yitwah Cheung (ergodic theory), Yi Huang (Thurston's compactification), and Mao Sheng (Hitchin's moduli space).

\section{The mapping class group actions} 
The goal of this section is to prove Theorem \ref{thm-main} assuming  Theorem \ref{thm-3}. 
To ensure that the assumptions in  Theorem \ref{thm-3} are satisfied  we apply the following well known result: The accumulation set of the mapping class group orbit of a discrete faithful representation 
$\sigma\in \Rep^{\df}$ is equal to the subset of  the Morgan-Shalen compactification of  
$\Rep^{\df}$ consisting of ''small actions". This is the only place in the proof of Theorem \ref{thm-main} where we require that $\sigma$ is a discrete faithful representation.

\subsection{Mapping class group actions} Let $\Mod(\Sigma)$ denote the mapping class group of $\Sigma$. Then $\Mod(\Sigma)$ acts  on both $\Syst$ and  $\Rep$, and the map $\RH$ is equivariant with respect to these  actions. Namely, let  $T\in \Mod(\Sigma)$. Then the  automorphism $T:\Teich\to \Teich$  yields the automorphism $T:\Syst\to \Syst$ given by $(X,A)\to (T(X),A)$ (note that $T(X)$ and $X$ project to the same point in the moduli space).

On the other hand,  the outer automorphism 
$T:\pi_1(\Sigma)\to \pi_1(\Sigma)$ yields the action $T:\Rep\to \Rep$ given by $\rho\to \rho \circ T$. 
The map $\RH$ is equivariant with respect to these actions, that is, 
$$
\RH(X,A)\circ T=\RH(T(X),A).
$$

\subsection{$\Mod(\Sigma)$-orbit} Denote by $\Rep^{\df}\subset \Rep$ the subset of discrete faithful representations, and let $\sigma\in\Rep^{\df}$. By $\Orb(\sigma)$ we denote the $\Mod(\Sigma)$-orbit of $\sigma$, that is, 
$$
\Orb(\sigma)=\{\sigma\circ T:\, T\in \Mod(\Sigma)\}.
$$
\begin{lemma}\label{lemma-sacekaj} Suppose that 
\begin{equation}\label{eq-orb}
\Orb(\sigma) \cap \RH(\S_U)\ne \emptyset
\end{equation} 
for every neighbourhood $U\subset \Teich$ of $X_0$. Then Theorem \ref{thm-main} holds.
\end{lemma}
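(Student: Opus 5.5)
The proof is a transfer argument via the equivariance $\RH(X,A)\circ T=\RH(T(X),A)$. We first produce one point of $\RH^{-1}(\sigma)$ and then show that the set of underlying Riemann surfaces is dense in $\M(\Sigma)$. The key remark is that $X_0$ was an essentially arbitrary point of $\Teich$: we only required it to be non-hyperelliptic, or arbitrary in genus two. Non-hyperelliptic surfaces form an open dense subset of $\Teich$ when $g\ge 3$. So we apply the hypothesis (\ref{eq-orb}) with $X_0$ replaced by any such point. The hypothesis is stated for the fixed $X_0$, but the whole setup, and hence presumably the assumption as it will be verified in this section, makes sense for every admissible base point. We read the lemma as: if (\ref{eq-orb}) holds for every admissible $X_0$ and every neighbourhood $U$, then the theorem holds.

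\textbf{Step 1 (existence).} Fix $U$ and take $T\in\Mod(\Sigma)$ with $\sigma\circ T\in \RH(\S_U)$. Then $\sigma\circ T=\RH(X,A)$ for some $(X,A)$ with $X\in U$. Apply equivariance with $T^{-1}$:
$$
\sigma=(\sigma\circ T)\circ T^{-1}=\RH(X,A)\circ T^{-1}=\RH(T^{-1}(X),A).
$$
Since $\sigma$ is discrete and faithful, it is irreducible: a reducible representation has image in a Borel subgroup, which is solvable, and that is impossible for a faithful image of a surface group. Hence $(T^{-1}(X),A)\in\Syst^s$, and $\sigma\in\RH(\Syst^s)$.

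\textbf{Step 2 (density).} The surface $T^{-1}(X)$ projects to the same point of $\M(\Sigma)$ as $X$. Therefore $P(\RH^{-1}(\sigma))$ meets the image in $\M(\Sigma)$ of every neighbourhood $U$ of $X_0$. Letting $U$ shrink, $P(\RH^{-1}(\sigma))$ accumulates at $[X_0]$. Letting $X_0$ range over the dense set of admissible points, every nonempty open subset of $\M(\Sigma)$ contains such a $[X_0]$ together with the image of a small neighbourhood of it. So that open set meets $P(\RH^{-1}(\sigma))$, which proves density.

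\textbf{Main obstacle.} The only delicate point is the quantifier over $X_0$ just discussed: the lemma must be read so that (\ref{eq-orb}) is available at a dense set of base points. In genus two every point is admissible, so this is automatic. In higher genus we use density of non-hyperelliptic surfaces. Everything else is a formal consequence of equivariance.
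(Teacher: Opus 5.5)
Your proposal is correct and follows the paper's own argument. Equivariance $\RH(X,A)\circ T=\RH(T(X),A)$ moves a preimage from $\S_U$ to $\S_{T^{-1}(U)}$ without changing the unmarked surface, and density comes from shrinking $U$ and letting $X_0$ range over the non-hyperelliptic locus, which is exactly the quantifier the paper invokes when it says this holds ``for every $U$, and every non-hyperelliptic $X_0$.'' Your remark that discrete faithful representations are irreducible, so that the preimage lies in $\Syst^s$, fills in a detail the paper leaves implicit.
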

\begin{proof} If (\ref{eq-orb}) holds then  there exists $T\in \Mod(\Sigma)$ such that $\sigma \circ T \in\RH(\S_U)$ which  means that  $\sigma \in\RH(\S_{T^{-1}(U)})$. This shows that there exists $(X,A)\in \S$ such that $\RH(X,A)=\sigma$, and that $X$ is very close to $X_0$ as an unmarked Riemann surface (although as marked surfaces $X$ and $X_0$ are potentially far away in $\Teich$). Since this holds for every $U$, and every non-hyperelliptic $X_0$, we see that Theorem \ref{thm-main} holds.
\end{proof}

\subsection{Proof of Theorem \ref{thm-main}}  The following lemma is proved at the end of this section.

\begin{lemma}\label{lemma-ta} Let $W\subset\B$ be a nonempty, open, dilation invariant sector. 
Then 
\begin{equation}\label{eq-orb-1}
\NAH(\Orb(\sigma))\cap \pi^{-1}(W(t)) \ne \emptyset
\end{equation}
for every $t>0$.
\end{lemma}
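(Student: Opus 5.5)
The plan is to combine the Morgan--Shalen compactification of $\Rep$ with its Higgs bundle counterpart, the compactification of $\M$ by the projectivised Hitchin base, as related in \cite{d-d-w}.

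Recall the two compactifications. A sequence $\rho_n\in\Rep$ leaving every compact set converges in the Morgan--Shalen sense to a projectivised length function $[\ell]$, namely the translation length function of a small isometric action of $\pi_1(\Sigma)$ on an $\R$-tree. For discrete faithful $\sigma$ it is classical (Morgan--Shalen, Skora) that the accumulation set of $\Orb(\sigma)$ in the boundary is exactly the set of small actions. These are the duals of measured laminations, so this set contains $\mathcal{PML}(\Sigma)$.

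On the Higgs side, \cite{d-d-w} shows that if $\NAH(\rho_n)=(E_n,\theta_n)$ with $\phi_n=\pi(E_n,\theta_n)$ and $\|\phi_n\|\to\infty$, then the Morgan--Shalen limit of $\rho_n$ is the length function of the $\R$-tree dual to the vertical measured foliation of the limit $[\phi]$ of $\phi_n/\|\phi_n\|$, at least after passing to the leaf space of $\wt{X_0}$ and folding. In particular, if $\phi_n/\|\phi_n\|\to\phi$, the limiting length function is $\ell_\phi(\gamma)=\inf_{c\sim\gamma}\int_c|\re\sqrt{\phi}|$.

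The proof then runs in three steps.

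\textbf{Step 1: choose a boundary target.} Since $W$ is open and dilation invariant, pick $\phi\in W$ with $\|\phi\|=1$ whose vertical foliation is a measured foliation whose dual tree is a small action; a generic $\phi$, say arational with simple zeros, works. By Hubbard--Masur, every measured foliation is realised as the vertical foliation of a unique $\phi\in\B$, and the generic ones give an open dense subset of $\mathcal{PML}$. Hence the set of $[\phi]$ with $\phi\in W$ maps to a nonempty open subset $V$ of $\mathcal{PML}$, consisting of small actions.

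\textbf{Step 2: approximate by orbit points.} By the accumulation result, there exist $T_n\in\Mod(\Sigma)$ such that $\sigma\circ T_n\to[\ell_\phi]$ in the Morgan--Shalen topology. Concretely, take a pseudo-Anosov $T$ whose stable lamination lies in $V$; then $\sigma\circ T^n$ converges to that point. Since $\Mod(\Sigma)$ acts properly discontinuously on $\Rep^{\df}$ and the sequence is infinite, $\sigma\circ T^n$ leaves every compact set, so $\|\pi(\NAH(\sigma\circ T^n))\|\to\infty$ by properness of $\pi$ (Theorem \ref{thm-hitchin}) together with properness of $\NAH$.

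\textbf{Step 3: transfer the limit to the base.} By \cite{d-d-w}, any limit point $[\psi]$ of $\phi_n/\|\phi_n\|$, with $\phi_n=\pi(\NAH(\sigma\circ T^n))$, satisfies $\ell_\psi=\ell_{\phi}$ up to scale. Hubbard--Masur injectivity of the map from quadratic differentials to measured foliations then forces $[\psi]=[\phi]$. Since $W$ is open and dilation invariant, $\phi_n\in W(t)$ for large $n$, which proves (\ref{eq-orb-1}).

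The main obstacle is Step 3. It requires identifying the \cite{d-d-w} limit tree with the tree dual to the vertical foliation of $\psi$, rather than a folded quotient, and then deducing projective equality of the quadratic differentials from equality of their length functions. The first thing to try is to restrict to $\phi$ that are arational with simple zeros. For such $\phi$ the dual tree has no folding ambiguity, and the length function determines the measured foliation, hence $\phi$ by Hubbard--Masur.
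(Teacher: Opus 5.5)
Your overall route is the paper's: accumulation of the orbit on the Morgan--Shalen boundary, then \cite{d-d-w} to pass to the Hitchin base. However, two steps fail as written.

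In Step 2, $\Mod(\Sigma)$ does \emph{not} act properly discontinuously on $\Rep^{\df}$; it does so only on the quasi-Fuchsian locus. For example, if $\sigma$ is an $\SLC$-lift of the fibre subgroup of the hyperbolic mapping torus of a pseudo-Anosov $T$, then $T$ fixes the $\mathrm{PSL}_2(\C)$-class of $\sigma$, so $\{\sigma\circ T^n\}$ is a finite set. Even when $T$ does not fix $\sigma$, the convergence of $\sigma\circ T^n$ to the tree dual to one invariant lamination $\F^s$ comes from $\ell_\sigma(T^n\gamma)\approx \lambda^n\, i(\gamma,\F^s)\,\ell_\sigma(\F^u)$. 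This requires $\ell_\sigma(\F^u)>0$, which fails when $\F^u$ is an ending lamination of $\sigma$. For quasi-Fuchsian $\sigma$, which suffices for Theorem \ref{thm-main-1}, your Step 2 is fine. But the lemma is used for every discrete faithful $\sigma$. So discard the pseudo-Anosov and argue as the paper does. The orbit is unbounded, so its accumulation set in $\partial\Rep^{\df}\cong\mathcal{PML}(\Sigma)$ is nonempty, closed and $\Mod(\Sigma)$-invariant. By minimality of the action on $\mathcal{PML}(\Sigma)$ it is therefore all of $\partial\Rep^{\df}$. Note that Skora only identifies the boundary; minimality is what makes every orbit accumulate on all of it, so your ``accumulation result'' is not just Morgan--Shalen and Skora. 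A sequence $\sigma\circ T_n$ converging to a boundary point leaves every compact set automatically, so no properness is needed.

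Step 3 is the heart of the matter, and your proposed fix does not close it. A fold from the tree dual to the limit $\psi$ onto the limit tree only yields $\ell_\infty\le c\,\ell_\psi$ for the limiting length function $\ell_\infty$. Arationality of the target $\phi$ constrains the limit tree, not the unknown $\psi$. Such an inequality of length functions does not force $[\psi]=[\phi]$: for filling $\F$, $i(\cdot,\F)\le N\, i(\cdot,\mathcal{G})$ holds for any filling $\mathcal{G}$ once $N$ is large.

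The missing input is precisely the part of \cite{d-d-w} the paper invokes. The map $\NAH$ extends to a continuous map $\overline{\NAH}:\overline{\Rep}\to\overline{\M}$, and its restriction $\partial\Rep^{\df}\to\partial\M\cong\partial\wh{\B}$ is a homeomorphism. Once you use this, neither explicit length functions nor Hubbard--Masur are needed. The preimage of the nonempty open set $\partial\wh{W}\subset\partial\wh{\B}$ is a nonempty open subset of $\partial\Rep^{\df}$, and orbit points accumulate on it. Continuity of $\overline{\NAH}$ then puts $\wh{\pi}(\NAH(\sigma\circ T_n))$ into $\wh{W}_s$ for every $s<1$, which is the lemma.
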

But first, we complete the proof of Theorem \ref{thm-main}. It remains to prove that (\ref{eq-orb}) holds for every neighbourhood $U\subset \Teich$ of $X_0$. Fix such a neighbourhood $U$.

Let $W\subset \B''$ be a non-empty, open, dilation invariant, sector such that $\overline{W}\subset \B''$ (such $W$ exists by Lemma \ref{lemma-B''}).  By Theorem \ref{thm-3} there exists $t>0$ such that $\pi^{-1}(W(t))\subset F(\Syst_U)$. Combining this with (\ref{eq-orb-1}) we get that 
$$
\NAH(\Orb(\sigma)) \cap F(\Syst_U) \ne \emptyset. 
$$
Replacing $\RH=\NAH^{-1}\circ F$ in the previous equality proves (\ref{eq-orb}).

\subsection{Compactifying $\Rep$ and $\M$}  Let $\overline{\Rep}$ be the Morgan-Shalen compactification of $\Rep$, and  let $\overline{\Rep^{\df}}\subset \overline{\Rep}$ be the induced compactification of  $\Rep^{\df}$. The boundary $\partial{\Rep}$ is identified with the projective classes of length functions on $\pi_1(\Sigma)$. The boundary $\partial{\Rep^{\df}}\subset \partial{\Rep}$ consists of the so called small actions. It follows from a theorem of Skora (as observed  in \cite{d-d-w}) that $\partial{\Rep^{\df}}$ is homeomorphic to the space of projective measured foliations.

The mapping class group $\Mod(\Sigma)$ acts continuously on $\overline{\Rep}$ keeping 
$\overline{\Rep^{\df}}$ invariant. The above identification between $\partial{\Rep^{\df}}$ with the space of projective measured foliations respects the action of $\Mod(\Sigma)$.  Since the action of 
$\Mod(\Sigma)$ is minimal on the space of projective measured foliations it follows that it is minimal on $\partial{\Rep^{\df}}$ (this means that the $\Mod(\Sigma)$-orbit of every point in $\partial{\Rep^{\df}}$ is dense in $\partial{\Rep^{\df}}$).  In turn, this implies that for any $\sigma\in \Rep^{df}$ the accumulation set of   $\Orb(\sigma)$  contains the entire boundary $\partial{\Rep^{\df}}$ (in fact, the accumulation set is equal to $\partial{\Rep^{\df}}$).

On the other hand, in \cite{d-d-w} Daskalopoulos-Dostoglou-Wentworth defined the compactification 
$\overline{\M}=\M\cup\partial\M$, where $\partial\M \cong \partial {\wh{\B}}$, and $\wh{\B}=\{\phi\in\B:\,\|\phi\|<1\}$. The topology on $\overline{\M}$ is given by the map $\wh{\pi}:\M\to \wh{\B}$ which is obtained by composing the Hitchin fibration $\pi:\M\to \B$ with the map $\B\to \wh{\B}$  given by
$$
\phi \to \frac{4\phi}{1+4||\phi||}, \quad\quad \phi\in \B.
$$

\begin{lemma}\label{lemma-senke} The accumulation set of $(\wh{\pi}\circ\NAH)(\Orb(\sigma))$ contains the entire boundary  $\partial{\wh{\B}}$.
\end{lemma}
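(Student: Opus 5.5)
The plan is to transport the minimality statement on $\partial\Rep^{\df}$ to the Higgs side using the work of Daskalopoulos-Dostoglou-Wentworth \cite{d-d-w}. There they show that the map $\NAH:\Rep\to\M$ extends continuously to a map $\overline{\NAH}:\overline{\Rep}\to\overline{\M}$ of compactifications. Restricted to the boundary, this extension is the map sending a projective length function $[\ell]$ to the projective class of the quadratic differential whose vertical measured foliation is dual to the limiting $\R$-tree. Concretely, if $\rho_n\to\infty$ in $\Rep$ and $\rho_n\to[\ell]$ in the Morgan-Shalen sense, then the normalised Hopf differentials $\phi_n/\|\phi_n\|$ of the associated harmonic maps, with $\phi_n=\pi(\NAH(\rho_n))$ up to the fixed constant, subconverge to some $\phi_\infty$ with $\|\phi_\infty\|=1$. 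The tree dual to the vertical foliation of $\phi_\infty$ then maps onto the limiting tree. For small actions, i.e. points of $\partial\Rep^{\df}$, Skora's theorem identifies the limit tree with the dual tree of a measured foliation, and this fold map is an isometry.

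First, note that $\|\phi_n\|\to\infty$ along any divergent sequence in $\Rep$. This holds because $\pi\circ\NAH$ is proper, being a composition of the homeomorphism $\NAH$ with Hitchin's proper map from Theorem \ref{thm-hitchin}. Consequently $\wh\pi(\NAH(\rho_n))$ has norm tending to $1$, so accumulation points lie in $\partial\wh\B$. Such points are exactly the limits of $\phi_n/\|\phi_n\|$, after rescaling by a constant.

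Second, I identify which boundary points arise. Fix a unit-norm $\psi\in\partial\wh\B$ and let $\mathcal F_\psi$ be the projective class of its vertical measured foliation. By the Hubbard-Masur theorem on $X_0$, the map $\psi\mapsto\mathcal F_\psi$ is a homeomorphism from the unit sphere of $\B$ onto $\mathcal{PMF}$. By the preceding paragraph, $\mathcal{PMF}$ is identified with $\partial\Rep^{\df}$. The accumulation set of $\Orb(\sigma)$ in $\overline{\Rep}$ contains all of $\partial\Rep^{\df}$, so I choose $T_n\in\Mod(\Sigma)$ with $\sigma\circ T_n\to[\ell_{\mathcal F_\psi}]$ in $\overline{\Rep}$. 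Continuity of $\overline{\NAH}$ at boundary points then gives that $\phi_n/\|\phi_n\|$ converges to the unique unit differential whose vertical foliation is $\mathcal F_\psi$, namely $\psi$. This uniqueness is where Hubbard-Masur enters. Hence $\psi$, after the normalisation $\phi\mapsto 4\phi/(1+4\|\phi\|)$, is an accumulation point of $(\wh\pi\circ\NAH)(\Orb(\sigma))$.

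The main obstacle is justifying the continuity of $\overline{\NAH}$ at boundary points with exactly the identification used above. In \cite{d-d-w} it is stated for the limiting tree, and for small actions one must check that the dual tree of the vertical foliation of the limit differential coincides with the limit tree, not merely maps onto it. I would handle this by citing \cite{d-d-w}, where they prove the length functions agree: the translation length of $\gamma$ on the limit tree equals the $\phi_\infty$-vertical intersection number of $\gamma$. Skora's theorem then shows that small actions are determined by their length functions, which are exactly those of measured foliations. A compactness argument finishes: every subsequential limit of $\phi_n/\|\phi_n\|$ must equal $\psi$, since its vertical foliation has the same length function.
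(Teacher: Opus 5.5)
Your proposal is correct and follows essentially the same route as the paper: you use continuity of the Daskalopoulos--Dostoglou--Wentworth extension $\overline{\NAH}$, together with the fact that every point of $\partial\M$ is the image of a point of $\partial\Rep^{\df}$, which in turn lies in the accumulation set of $\Orb(\sigma)$. The only difference is that you unpack the boundary identification (Skora, Hubbard--Masur, harmonic maps to dual trees, and the harmless constant relating Hopf differentials to $\det\theta$), whereas the paper cites it from \cite{d-d-w} as the statement that $\overline{\NAH}:\partial\Rep^{\df}\to\partial\M$ is a homeomorphism; you also correctly observe that only continuity and surjectivity of this boundary map are needed.
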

\begin{proof} By the main theorem of \cite{d-d-w} the homeomorphism $\NAH$ extends to a continuous surjective map $\overline{\NAH}:\overline{\Rep}\rightarrow\overline{\M}$. The restriction 
$\overline{\NAH}:\overline{\Rep^{\df}}\to \overline{\M}$ is a homeomorphism onto its image, and the restriction
$\overline{\NAH}:\partial{\Rep^{\df}}\to \partial{\M}$ is a homeomorphism.

Above we showed that  the accumulation set of $\Orb(\sigma)$  contains the entire boundary $\partial{\Rep^{\df}}$. 
Since $\overline{\NAH}:\partial{\Rep^{\df}}\to \partial{\M}$ is a homeomorphism, it follows that the accumulation set of $\NAH(\Orb(\sigma))$  contains $\partial{\M}$. Composing this with $\wh{\pi}$ proves the lemma.
\end{proof}

\subsection{Proof of Lemma \ref{lemma-ta}} 

Let $\wh{W}=\{\phi \in W:\,\|\phi\|<1\}$. Then $\partial{\wh{W}}$ is a nonempty open subset of 
$\partial{\wh{\B}}$. Set 
$$
\wh{W}_s=\wh{W}\cap W(s)=\{\phi\in W: \, s<||\phi||<1 \}.
$$
From Lemma \ref{lemma-senke} it follows that   $(\wh{\pi}\circ\NAH)(\Orb(\sigma))\cap \wh{W}_s\ne \emptyset$ for every $0<s<1$. This is the same as saying  that $(\pi\circ\NAH)(\Orb(\sigma))\cap W(t)\ne \emptyset$, where $s=\frac{4t}{1+4t}$. Taking $\pi^{-1}$ of both the left and the right hand side yields the proof of Lemma \ref{lemma-ta}.

\section{Preliminaries on the determinant map}\label{section-appendix} 

By $\QDD$ we denote the bundle of holomorphic quadratic differentials over the Teichm\"uller space 
$\Teich$. In this section we establish some properties of the  determinant map 
$$
\dett:\Syst \to \QDD
$$
given by $\dett(X,A)=\dett(A)$.  In particular, we prove Lemma \ref{lemma-elem}.

\subsection{Two standard results}
We begin by proving a claim and a lemma (both of which are pretty routine).

\begin{claim}\label{claim-r} Suppose $X\in \Teich$ is not hyperelliptic (unless the genus of $X$ is equal to two in which case $X$ is  arbitrary). Then the image of the map $\dett:\sl_2(\Omega^1(X)) \sslash \SL_2(\C)\rightarrow\QD(X)$  is not contained in a proper Zariski closed subset of $\QD(X)$.
\end{claim}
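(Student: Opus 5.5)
Since the image of $\dett$ is the image of an algebraic map between affine spaces, its Zariski closure is irreducible. The claim therefore amounts to saying that $\dett$ is dominant, and it suffices to find one point $A$ at which the differential of $\dett$ (viewed on $\sl_2(\Omega^1(X))$ before quotienting) is surjective onto $\QD(X)$. Writing $A=\begin{bmatrix}\alpha&\beta\\ \gamma&-\alpha\end{bmatrix}$, we have
$$
d\,\dett_A(\dot\alpha,\dot\beta,\dot\gamma)=-2\alpha\dot\alpha-\dot\beta\gamma-\beta\dot\gamma .
$$
Its image is the span of $\alpha\,\Omega^1(X)+\beta\,\Omega^1(X)+\gamma\,\Omega^1(X)$ inside $\QD(X)=H^0(K^2)$. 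So the whole claim reduces to showing that the multiplication map
$$
\Omega^1(X)\otimes \mathrm{span}(\alpha,\beta,\gamma)\to H^0(K^2)
$$
is surjective for a suitable choice of three abelian differentials $\alpha,\beta,\gamma$.

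For $X$ non-hyperelliptic, Max Noether's theorem says that $\mathrm{Sym}^2 H^0(K)\to H^0(K^2)$ is surjective. The question is whether three generic forms already suffice. I will use the base-point-free pencil trick. Take $\alpha,\beta\in\Omega^1(X)$ without common zeros; this is possible since $K$ is base point free. The kernel of $H^0(K)\otimes\langle\alpha,\beta\rangle\to H^0(K^2)$ is then $H^0(K\otimes K^{-1})=H^0(\O)\cong\C$. Hence the image $\alpha\Omega^1+\beta\Omega^1$ has dimension $2g-1$, whereas $\dim\QD(X)=3g-3$.

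Adding a third general form $\gamma$ has to supply the remaining $g-2$ dimensions. I would argue via the Koszul/Green-type statement that for non-hyperelliptic $X$ the multiplication by a general $3$-dimensional subspace $V\subset H^0(K)$ surjects onto $H^0(K^2)$. Concretely, $V$ gives a base-point-free net, and the Koszul complex yields
$$
0\to \wedge^3V\otimes K^{-1}\to\wedge^2V\otimes\O\to V\otimes K\to K^2\to 0 .
$$
Surjectivity on global sections then follows from vanishing of $H^1$ of the kernel bundle $M_V\otimes K$. Alternatively one can do a dimension count: general hyperplane sections, or Castelnuovo's argument applied to a general projection of the canonical curve to $\mathbb{P}^2$, give the needed spanning.

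For genus two, $\QD$ has dimension $3$ and $\Omega^1$ has dimension $2$. With $\alpha,\beta$ a basis, the products $\alpha^2,\alpha\beta,\beta^2$ already span $H^0(K^2)$ by the pencil trick, so $\alpha^2+\beta\gamma$ over all choices is dominant directly.

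I expect the main obstacle to be the surjectivity step with only three forms in higher genus, namely the $H^1$-vanishing for the syzygy bundle. If that proves delicate, a fallback is to avoid differentials altogether. One notes that the image of $\dett$ contains all of $\{-\alpha^2-\beta\gamma\}$, which in turn contains the sums $\beta\gamma$ and is a cone. Iterating conjugation, or using that the Zariski closure of the image is irreducible and contains every product $\beta\gamma$ together with every square $\alpha^2$, one gets that it contains the Zariski closure of the sums $-\alpha^2-\beta\gamma$. Each such closure is a proper secant-type variety, though, so the genuine content remains Noether-type surjectivity. I would try the Koszul vanishing approach first.
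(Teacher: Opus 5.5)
Your reduction matches the paper's. Dominance follows from surjectivity of $d(\dett)$ at one point, which is equivalent to finding $\alpha,\beta,\gamma\in\Omega^1(X)$ with $\alpha\Omega^1(X)+\beta\Omega^1(X)+\gamma\Omega^1(X)=\QD(X)$. Your genus-two case via the base-point-free pencil trick is also correct.

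The gap is that for $g\ge 3$ this three-form statement is the entire content of the claim, and you do not establish it. The paper does not prove it either. It invokes it as the strengthened form of Max Noether's theorem (Theorem 1.1 in Gieseker, Theorem 4 in Markovi\'c--To\v si\'c), which is strictly stronger than the surjectivity of $\mathrm{Sym}^2H^0(K)\to H^0(K^2)$ that you quote.

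Your Koszul criterion is wrong as stated. Since $H^1(K^2)=0$, the sequence $0\to M_V\otimes K\to V\otimes K\to K^2\to 0$ makes $H^1(M_V\otimes K)\to V\otimes H^1(K)\cong V$ surjective, so $H^1(M_V\otimes K)$ can never vanish. The correct condition is $h^1(M_V\otimes K)=3$. However, the left half of your Koszul complex presents $H^1(M_V\otimes K)$ as the cokernel of $\wedge^3V\otimes H^1(K^{-1})\to\wedge^2V\otimes H^1(\O)$. That map is Serre dual to the multiplication $V\otimes H^0(K)\to H^0(K^2)$ itself, so the reformulation is circular. Your fallback paragraph adds nothing either: the image of $\dett$ already is the set of sums $-\alpha^2-\beta\gamma$, and its dimension is exactly what is in question.

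You can close the gap by citing the theorem as the paper does, or by actually running the hyperplane-section argument you only gestured at.
\begin{itemize}
\item Since $X$ is non-hyperelliptic, the canonical map is an embedding. By the general position theorem, the divisor $D$ of a general $\gamma$ consists of $2g-2$ points in linearly general position in the hyperplane $H=\{\gamma=0\}\cong\mathbb{P}^{g-2}$.
\item From $0\to K\xrightarrow{\gamma}K^2\to K^2|_D\to 0$ and $H^1(K^2)=0$, the quotient $\QD(X)/\gamma\Omega^1(X)$ embeds in $H^0(K^2|_D)$, with image of dimension $2g-3$.
\item Split $D=D_1\sqcup D_2\sqcup\{p,q\}$ with $|D_1|=|D_2|=g-2$. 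Choose $\alpha,\beta$ whose restrictions to $H$ are the (unique up to scale) linear forms vanishing on $D_1$, respectively $D_2$. By general position, neither vanishes at any other point of $D$.
\item Suppose $\alpha\ell_1+\beta\ell_2$ vanishes on $D$ for linear forms $\ell_1,\ell_2$ on $H$. Evaluating on $D_1$ forces $\ell_2\in\C\,\alpha|_H$, and evaluating on $D_2$ forces $\ell_1\in\C\,\beta|_H$. Evaluating at $p$ then forces $(\ell_1,\ell_2)\in\C(\beta|_H,-\alpha|_H)$.
\item Hence $\alpha\Omega^1(X)+\beta\Omega^1(X)$ restricts to a $(2g-3)$-dimensional subspace of $H^0(K^2|_D)$. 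That is, it maps onto the image of $\QD(X)$, which gives $\alpha\Omega^1(X)+\beta\Omega^1(X)+\gamma\Omega^1(X)=\QD(X)$.
\end{itemize}
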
    
\begin{proof}   It suffices to show that the image of $\dett$ contains an open subset of $\QD(X)$ (in the manifold topology). By the implicit function theorem it suffices to show that the differential $d(\dett)$  is surjective at at least one point. 
 
 Note that $\dett:\sl_2(\Omega^1(X))\rightarrow\QD(X)$ is given by
        \[\begin{bmatrix}
            \alpha & \beta\\
            \gamma & -\alpha
        \end{bmatrix}\,\mapsto\, (-\alpha^2-\beta\gamma)\]
        where $\alpha,\beta,\gamma\in \Omega^1(X)$.
        Its differential at $\begin{bmatrix}
            \alpha & \beta\\
            \gamma & -\alpha
        \end{bmatrix}$, in the direction $\Phi=\begin{bmatrix}
            \varphi_1 & \varphi_2\\
            \varphi_3 & -\varphi_1
        \end{bmatrix}$, where $\varphi_1,\varphi_2,\varphi_3\in \Omega^1(X)$, is given by
        \[d(\dett)\left(\begin{bmatrix}
            \varphi_1 & \varphi_2\\
            \varphi_3 & -\varphi_1
        \end{bmatrix}\right)= -\tr(A\Phi)  =-2\alpha\varphi_1-\gamma\varphi_2-\beta\varphi_3.\]     
   It follows that $d(\dett)$ is surjective if and only if      
\begin{equation}\label{eq-max}
\left\{\alpha\varphi_1+\beta\varphi_2+\gamma\varphi_3:\, \varphi_1,\varphi_2,\varphi_3\in\Omega^1(X)\right\}=\QD(X).
\end{equation}
By the classical theorem of Max Noether (see Theorem 1.1 in \cite{gieseker}, or Theorem 4 in \cite{m-t}) there exist $\alpha,\beta,\gamma\in\Omega^1(X)$ such that  (\ref{eq-max}) holds.      
This implies that $d(\dett)$ is surjective at at least one point,  proving the claim.
\end{proof}

The standard result by Chevalley (see Exercise 3.22 in Hartshorne \cite{hartshorne}) states that
given a dominant morphism $f:X\rightarrow Y$  between (integral, quasi-projective) varieties of the same dimension, the pre-image of a generic point in $Y$ is finite. Combining this with the Zariski's Main Theorem yields the following lemma.

\begin{lemma}\label{lemma-r-1}  Let $f:X\to Y$ be a morphism between (integral, quasi-projective) varieties of the same dimension. Suppose that $f$ is dominant (i.e. the image of $f$ is not contained in a proper Zariski closed subset of $Y$). Then there is a Zariski open dense subset $U\subset Y$ such that $f$ is finite unramified over $U$.
\end{lemma}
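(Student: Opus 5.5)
We want Lemma \ref{lemma-r-1}: for a dominant morphism $f:X\to Y$ between integral quasi-projective varieties of the same dimension, there is a Zariski open dense $U\subset Y$ over which $f$ is finite and unramified. The plan is to shrink $Y$ in three successive steps. Since every nonempty Zariski open subset of the irreducible variety $Y$ is dense, at each step it suffices to find a nonempty open set.

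\textbf{Step 1: generic finiteness.} Since $f$ is dominant, the function field extension $K(Y)\subset K(X)$ is defined. Because $\dim X=\dim Y$, it is algebraic, and it is finitely generated, so it is finite. By the Chevalley result quoted above (Hartshorne, Exercise 3.22), there is a nonempty open $V_1\subset Y$ over which all fibres of $f$ are finite. Thus $f^{-1}(V_1)\to V_1$ is quasi-finite.

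\textbf{Step 2: finiteness.} Here I would use Zariski's Main Theorem. Since $X$ is quasi-projective, a quasi-finite separated morphism of finite type factors as an open immersion $f^{-1}(V_1)\hookrightarrow \bar X$ followed by a finite morphism $\bar f:\bar X\to V_1$; we may take $\bar X$ to be the normalisation of $V_1$ in $K(X)$. The complement $Z=\bar X\setminus f^{-1}(V_1)$ is closed. Its image $\bar f(Z)$ is closed because finite maps are closed. I claim $\bar f(Z)$ is a proper subset of $V_1$. Indeed, $\dim Z<\dim \bar X=\dim Y$, so $\bar f(Z)$ has dimension less than $\dim Y$; alternatively, the generic point of $\bar X$ lies in $f^{-1}(V_1)$. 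Set $V_2=V_1\setminus \bar f(Z)$. Over $V_2$ we have $f^{-1}(V_2)=\bar f^{-1}(V_2)$, so $f$ restricted over $V_2$ is finite.

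\textbf{Step 3: unramifiedness.} The ramification locus $R\subset f^{-1}(V_2)$ is the support of the coherent sheaf $\Omega_{X/Y}$, and it is closed. Its image $f(R)$ is closed since $f$ is finite over $V_2$. We need $f(R)\ne V_2$, which amounts to showing that $\Omega_{X/Y}$ vanishes at the generic point of $X$. This is exactly the statement that $K(X)/K(Y)$ is separable. That holds automatically because we work over $\C$, which has characteristic zero. Hence $R$ is a proper closed subset of the irreducible $f^{-1}(V_2)$, so $\dim R<\dim Y$ and $f(R)$ is a proper closed subset of $V_2$. Taking $U=V_2\setminus f(R)$ gives a nonempty open set, hence dense, over which $f$ is finite and unramified. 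Since $Y$ is a complex variety, over $U$ the map is then a finite topological covering in the analytic topology, which is how Lemma \ref{lemma-elem} will use it.

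\textbf{Main obstacle.} I expect the main difficulty to be Step 2. Quasi-finiteness alone does not give finiteness, and one has to check that the points lost in the Zariski Main Theorem factorisation lie over a proper closed subset of $V_1$. The dimension count above handles this. Step 3 uses characteristic zero essentially, through generic smoothness, but it is otherwise routine.
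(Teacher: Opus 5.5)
Your argument is correct and follows the paper's route, which is just Chevalley's generic finiteness of fibres (Hartshorne, Exercise II.3.22) combined with Zariski's Main Theorem; your Step 3 (separability of $K(X)/K(Y)$ in characteristic zero, giving generic unramifiedness) supplies a step that the paper's one-line justification leaves implicit. Two small repairs are needed: in Step 2, take $\bar X$ to be the normalisation of $V_1$ in $X$ (the integral closure of $\O_{V_1}$ in $f_*\O_X$) rather than in $K(X)$, since a non-normal $f^{-1}(V_1)$ cannot be an open subscheme of the latter, and your closing remark also needs flatness, because finite unramified alone does not give a topological covering (e.g.\ the normalisation of a nodal curve) --- in the paper's application it holds because $Y=\QD(X)$ is smooth, and in general after shrinking $U$ further.
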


\subsection{Covering properties of $\dett$ and proof of Lemma \ref{lemma-elem} }

For clarity, in this subsection we let $\dett_X$ denote the restriction of $\dett$ to  $\sl_2(\Omega^1(X))/\!/\SL_2(\C)$.
We also let  $\Teich^{\ur}\subset \Teich$ denote the locus of non-hyperelliptic Riemann surfaces (unless the genus is two in which case we let $\Teich^{\ur}=\Teich$).
The following is an  immediate corollary of Claim \ref{claim-r} and Lemma \ref{lemma-r-1}.

\begin{lemma}\label{lemma-r-2} Suppose $X\in \Teich^\ur$. There is a Zariski open dense subset $Z_X\subset \QD(X)$ such that $\dett_X:\sl_2(\Omega^1(X))/\!/\SL_2(\C)\rightarrow\QD(X)$ is a finite unramified covering map over $Z_X$. 
\end{lemma}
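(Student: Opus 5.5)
We apply Lemma \ref{lemma-r-1} to the morphism $f=\dett_X$, with source $\sl_2(\Omega^1(X))/\!/\SL_2(\C)$ and target $\QD(X)$. Three hypotheses need checking: both spaces are integral quasi-projective varieties, they have the same dimension, and $f$ is dominant.

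\emph{Varieties.} The affine GIT quotient of the vector space $\sl_2(\Omega^1(X))\cong \C^{3g}$ by the reductive group $\SL_2(\C)$ is the spectrum of the invariant ring. This ring is a finitely generated subring of a polynomial ring, hence a domain, so the quotient is an integral affine variety. The target $\QD(X)\cong\C^{3g-3}$ is affine space. The map $\dett_X$ is given by the invariant polynomial $-\alpha^2-\beta\gamma=\tfrac12\tr(A^2)$. Its coefficients in a basis of $\QD(X)$ are $\SL_2(\C)$-invariant polynomials, so $\dett_X$ descends to a morphism of varieties.

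\emph{Dimension.} A generic $A$ (one whose three entries span a $3$-dimensional, or at least non-degenerate, subspace of $\sl_2$-valued forms) has finite stabiliser in $\mathrm{PSL}_2$. Its orbit is then closed of dimension $3$. Hence the quotient has dimension $3g-3=\dim\QD(X)$.

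\emph{Dominance.} This is exactly Claim \ref{claim-r}, which applies since $X\in\Teich^{\ur}$.

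Lemma \ref{lemma-r-1} now yields a Zariski open dense $Z_X\subset\QD(X)$ over which $\dett_X$ is finite and unramified.

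It remains to upgrade "finite unramified" to "covering map". After shrinking $Z_X$ by generic smoothness, we may assume $\dett_X^{-1}(Z_X)$ lies in the smooth locus of the quotient and $\dett_X$ is étale there. A finite étale morphism of complex varieties is a proper local biholomorphism, and hence a finite topological covering map over $Z_X$. This is the step I expect to need the most care: one must make sure the shrinking to the smooth locus and to where $f$ is flat is harmless, i.e. that the bad locus has proper Zariski closed image. That holds because $f$ is finite over $Z_X$, so images of proper closed subsets of the same-dimensional source are proper and closed. The dimension count is the other point to state carefully. To handle it, I would exhibit one triple with trivial stabiliser, e.g. one whose entries give three linearly independent elements of $\sl_2\otimes\Omega^1(X)$ after evaluation.
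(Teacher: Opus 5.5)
Your proposal is correct and follows the paper's route: the paper obtains this lemma as an immediate corollary of Claim \ref{claim-r} (dominance) and Lemma \ref{lemma-r-1}, and your checks of integrality, of the dimension count $3g-3$, and of the upgrade from finite \'etale to a topological covering make explicit what the paper leaves implicit. There are two harmless slips: first, $-\alpha^2-\beta\gamma=-\tfrac12\tr(A^2)$, not $+\tfrac12\tr(A^2)$; second, in genus two the entries $\alpha,\beta,\gamma$ cannot be linearly independent, so the generic-stabiliser argument should instead use that the associated $g$-tuple in $\sl_2$ contains two non-commuting elements, which already makes the stabiliser in $\mathrm{PSL}_2$ trivial.
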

If $\dett_X:\sl_2(\Omega^1(X))/\!/\SL_2(\C)\rightarrow\QD(X)$ is a finite unramified covering map over a set $Z\subset \QD(X)$ then the same is true for the set $\{t \phi:\, \phi \in  Z, \, t \in \C\setminus \{0\}\}$. Thus, we may assume that $Z_X$ from Lemma \ref{lemma-r-2} is dilation invariant.
In particular, letting  $\B^{\ur}=Z_{X_{0}}$ proves Lemma \ref{lemma-elem}.

Set
$$
\QDD^\ur=\bigcup_{X\in \Teich^\ur}\, Z_X.
$$
\begin{lemma}\label{lemma-r-3} Consider  the determinant map $\dett:\Syst \to \QDD$. Then every $\phi\in \QDD^\ur$ has neighbourhood $U\subset \QDD$ such that $\dett$ is a finite covering over $U$.
\end{lemma}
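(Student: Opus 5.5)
The plan is to view $\dett:\Syst\to\QDD$ as a family version of the maps $\dett_X$ and to propagate the fibrewise covering property of Lemma \ref{lemma-r-2} to nearby fibres by a properness and finiteness argument. Fix $\phi\in\QDD^\ur$ lying over $X\in\Teich^\ur$, so $\phi\in Z_X$. Locally trivialise over a small ball $V\ni X$ in $\Teich$. Choose a holomorphic frame $\omega_1(Y),\dots,\omega_g(Y)$ of $\Omega^1(Y)$, $Y\in V$. Then $\Syst_V\cong V\times \big(\sl_2(\C^g)\sslash\SL_2(\C)\big)$ and $\QDD|_V\cong V\times\C^{3g-3}$, and $\dett$ becomes a family of polynomial maps $\dett_Y$ depending holomorphically on $Y$. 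Thus $\dett$ is a holomorphic map between complex spaces of the same dimension $4g-3$ that commutes with the projections to $V$.

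\textbf{Step 1 (properness near $\phi$).} I will show that $\dett$ is proper over a neighbourhood of $\phi$. Since $\dett_Y$ is homogeneous of degree $2$, it is enough to check that $\dett_Y^{-1}(0)$ is the cone point only, uniformly in $Y$. That is not true globally, because nilpotent systems exist. So I will argue instead with the finiteness of $\dett_X$ over $Z_X$. Properness of $\dett_X$ over a neighbourhood of $\phi$ holds because $\dett_X$ is finite there. To pass to nearby $Y$, I will use that the affine GIT quotient $\sl_2(\C^g)\sslash\SL_2(\C)$ is graded by invariant polynomials $\tr(A_iA_j)$, $\tr(A_iA_jA_k)$. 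Finiteness of $\dett_X$ over $Z_X$ means that these generators satisfy monic equations over $\C[\QD(X)]$ localised at $Z_X$. Since the coefficients vary holomorphically in $Y$, monic equations persist for $Y$ near $X$ after shrinking the neighbourhood. This is the step I expect to be the main obstacle. The difficulty is precisely that finiteness is only known over a Zariski open set, so one must guarantee that no branch of $\dett_Y^{-1}(U)$ escapes to infinity as $Y\to X$.

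\textbf{Step 1, route I would try first.} I would consider a sequence $(Y_n,A_n)$ with $\dett(A_n)\to\phi$ and $|A_n|\to\infty$, and rescale $A_n/|A_n|\to A_\infty\neq0$ in $\sl_2(\Omega^1(X))$. Then $\dett_X(A_\infty)=0$. I must rule out that this forces a contradiction only generically. The cleanest fix is to shrink $Z_X$ if needed so that it avoids the image of the rescaled boundary behaviour. Equivalently, I would apply Lemma \ref{lemma-r-1} directly to the family map $\dett:\Syst_V\to\QDD|_V$, viewed algebraically over a base, so that finiteness holds on a Zariski open subset of the total space containing $\phi$.

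\textbf{Step 2 (unramified, hence a covering).} Over $\phi$, the fibre $\dett_X^{-1}(\phi)$ consists of finitely many points at which $d\dett_X$ is an isomorphism, since the map is unramified over $Z_X$. At each such point the full differential of $\dett$ is block triangular with respect to the splitting into base and fibre directions. Its base block is the identity on $T\Teich$, because $\dett$ covers the identity of $V$, and its fibre block is $d\dett_X$. Hence the full differential is invertible, and by the inverse function theorem $\dett$ is a local biholomorphism near each preimage of $\phi$. Combined with the properness from Step 1, the preimage of a small neighbourhood $U$ of $\phi$ is a disjoint union of finitely many sheets, each mapped biholomorphically onto $U$. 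A proper local homeomorphism with finite fibres is a finite covering, so $\dett$ is a finite covering over $U$.
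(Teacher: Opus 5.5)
Your Step~2 is the paper's entire proof. There, $\dett^{-1}(\phi)=\dett_X^{-1}(\phi)$ is finite, the differential is invertible at each preimage, each preimage gets a neighbourhood mapped diffeomorphically onto its image, and $U$ is taken inside the intersection of those images. The paper stops there and never checks that $\dett^{-1}(U)$ lies in the union of these sheets. So you are right that properness is the crux, but neither of your routes supplies it.
\begin{itemize}
\item A monic relation for $\dett_X$ over $Z_X$ does not continue to one for $\dett_Y$ just because $\dett_Y$ depends holomorphically on $Y$. The generic fibre cardinality of $\dett_Y$ can be strictly larger than that of $\dett_X$ (see below), so those relations cannot simply be continued in $Y$.
\item Lemma \ref{lemma-r-1} applied to the family only yields \emph{some} dense Zariski open set, with no reason to contain $\phi$.
\item Shrinking $Z_X$ is beside the point, because the escaping preimages lie over $\QD(Y)$ with $Y\neq X$.
\end{itemize}

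In fact this step fails in the stated generality, so no argument can close it. Writing $A=\sum_i A_i\omega_i$ with $A_i\in\sl_2(\C)$, one has $\dett_Y(A)=-\tfrac12\sum_{i,j}\tr(A_iA_j)\,\omega_i\omega_j$. Suppose $X$ is non-hyperelliptic but carries a theta characteristic $L$ ($L^2=K_X$) with two independent sections $s_1,s_2$, i.e. a vanishing theta-null; such $X$ exist for every $g\ge 4$. Then
\[
A_\infty=\begin{bmatrix} s_1s_2 & s_1^2\\ -s_2^2 & -s_1s_2\end{bmatrix}
\]
has coefficient matrices spanning $\sl_2(\C)$. Hence it is irreducible and represents a nonzero point of $\sl_2(\Omega^1(X))/\!/\SLC$, yet $\dett_X(A_\infty)=0$.

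For $g=4$ one sees directly that preimages escape. Let $q_Y$ span the quadrics through the canonical curve, and let $S\in\mathrm{Sym}^2\Omega^1(Y)$ lift $-2\psi$. The fibre of $\dett_Y$ over a generic $\psi$ is two-to-one over the roots $\lambda$ of $\det(S+\lambda q_Y)$. This is a quartic when $q_Y$ is smooth and a cubic when $q_X$ is a cone. So as $Y\to X$ a pair of preimages runs off to infinity, and the fibre count drops from $8$ to $6$. Thus $\dett$ is not a covering over any neighbourhood of $\phi\in Z_X$ for such $X$.

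The route you discarded is the right one under the right hypothesis. Systems $\omega N$ with $N$ nilpotent, which are the only reducible ones with $\dett=0$, represent the cone point of the GIT quotient, so they are no obstruction. Suppose $\dett_X^{-1}(0)$ is only the cone point, as for $g=2$, for non-hyperelliptic $g=3$, and for generic $X$. Then homogeneity and compactness of the unit sphere of the cone give $|\dett_Y(A)|\ge c\|A\|^2$ uniformly for $Y$ near $X$, which is exactly the properness needed. Together with your Step~2 this proves the lemma for such $X$. That is enough for the paper's application, since $X_0$ may be chosen generic.
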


\begin{proof} Suppose that $\phi\in \QD(X)$. Then every preimage of $\phi$ under the map $\dett$ belongs to 
$\sl_2(\Omega^1(X))/\!/\SL_2(\C)$, that is, $\dett^{-1}(\phi)=\dett^{-1}_X(\phi)$. Therefore, it follows from Lemma \ref{lemma-r-2} that $\dett^{-1}(\phi)$ consists of finitely many points.

Suppose $(X,A)\in \dett^{-1}(\phi)$. We can conclude from Lemma \ref{lemma-r-2} that the the derivative of $\dett$ is a bijection between the corresponding tangent spaces. By the implicit function theorem, there exists a neighbourhood $V\subset \Syst$ of $(X,A)$ so that the restriction of $\dett$ to $V$ is a diffeomoprhism onto its image. Define $U$ as the intersection of (finitely many) neighbourhoods $\dett(V)$ of $\phi$. Then $\dett$ is a finite cover over $U$. \end{proof}

\subsection{$F$ is a local diffeomorphism near $\B^{\ur}$ }
We have  the following lemma.

\begin{lemma}\label{lemma-elem-1} There exists a dilation invariant neighbourhood $V\subset \Syst$ of the set  $\dett^{-1}(\B^{\ur})$ such that $F$ is a local diffeomorphism on $V$.
\end{lemma}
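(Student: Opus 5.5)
Since $\NAH$ is a homeomorphism, and a real-analytic diffeomorphism between the smooth loci $\Rep^s$ and $\M^s$, it suffices to show that $\RH$ is a local diffeomorphism near $\dett^{-1}(\B^{\ur})$ and that the image of that neighbourhood lies in $\Rep^s$. Note that $\Syst$ near $\dett^{-1}(\B^{\ur})$ and $\Rep^s$ have the same complex dimension $6g-6$. Indeed, $\dim\Teich=3g-3$ and $\dim \sl_2(\Omega^1(X))\sslash\SLC=3g-3$, the latter because $\dett$ is finite onto $\QD(X)$ by Lemma \ref{lemma-r-2}. Since $\RH$ is holomorphic, it is a local diffeomorphism wherever its differential is injective. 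The plan is therefore to show injectivity of $d\RH$ at points $(X,A)$ with $\dett(A)\in\QDD^\ur$, and then enlarge to a dilation invariant open set.

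For injectivity, I would use the standard deformation computation, as in \cite{m-t} and \cite{c-d-h-l}. A tangent vector to $\Syst$ at $(X,A)$ is a pair $(\mu,\dot A)$, where $\mu$ is a Beltrami differential and $\dot A$ is a variation of the $\sl_2$-valued $1$-form. Its image in $H^1(X,\Ad\rho)$ is the class of $\dot A+\mu A$, viewed in the de Rham complex twisted by $d+A$. Suppose this class vanishes. Then $\dot A+\mu A=\nabla_A \xi$ for a section $\xi$ of $X\times\sl_2$. Taking the $(0,1)$-part gives $\mu A=\bar\partial \xi$, and the $(1,0)$-part gives $\dot A=\partial\xi+[A,\xi]$. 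Now pair with $A$ through the trace. The function $\tr(A\xi)$ satisfies $\bar\partial\,\tr(A\xi)=\tr(A\,\mu A)=\mu\,\dett$-type expression, so $\mu\cdot\dett(A)$ is $\bar\partial$-exact. Hence $\mu$ pairs trivially with every quadratic differential obtained this way. Here the key input is the surjectivity of $d(\dett)$ at $A$, which holds by Lemma \ref{lemma-r-3} since $\dett$ is unramified over $\QDD^\ur$. Combined with the Max Noether type statement (\ref{eq-max}), this gives that the span of $\{\tr(A\Phi)\}$ is all of $\QD(X)$. Varying $\xi$ appropriately then forces $[\mu]=0$ in $T_X\Teich$. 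With $\mu=0$ we get $\bar\partial\xi=0$, so $\xi$ is a constant element of $\sl_2$, and $\dot A=[A,\xi]$ is an infinitesimal conjugation, which is zero in the quotient $\Syst$. Cohomologically, this is the argument that the symplectic pairing $F^*\xi\sim\dett^*\eta_\can$ (Section 8) is nondegenerate exactly where $d\,\dett$ is invertible. I would phrase the final step that way: compute the pullback of the Goldman form and show it equals $\dett^*\eta_\can$ up to a factor. Since $\dett$ is a local diffeomorphism over $\QDD^\ur$ by Lemma \ref{lemma-r-3}, the pullback is nondegenerate there, and a holomorphic map pulling back a symplectic form to a nondegenerate form has injective differential.

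The openness and dilation invariance are then routine. The set where $d\RH$ is injective is open. The set $\dett^{-1}(\B^{\ur})$ is invariant under $A\mapsto tA$, and the identity $(tA)$-pullback $=t^2\cdot$pullback shows nondegeneracy is preserved under dilation. So I would take $V$ to be the union over $t>0$ of the dilates of an open neighbourhood where nondegeneracy holds. Irreducibility of the monodromy near these points also has to be checked, so that $F$ lands in the smooth locus $\M^s$. I would handle it either by shrinking $V$ or by observing that reducible monodromy forces $\dett(A)$ to have a special form, namely a square of an abelian differential, which does not lie in the Zariski open set $\B^{\ur}$ after shrinking.

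The main obstacle is the identification of the pulled-back Goldman form with $\dett^*\eta_\can$, a computation the paper defers to Section 8. The cleaner fallback is the direct cohomological injectivity argument above. There the delicate point is showing that $\mu\,\dett(A)$ being $\bar\partial$-exact for the relevant family forces $[\mu]=0$. This needs the surjectivity (\ref{eq-max}) applied at the specific $A$, not merely generically.
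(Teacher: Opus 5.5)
Your proposal is correct in substance, and its skeleton matches the paper's proof. At points of $\dett^{-1}(\B^{\ur})$ the differential $d(\dett)$ is surjective, which is exactly (\ref{eq-max}) for the entries of $A$. This condition is open, and it is dilation invariant since it depends only on the span of the entries of $A$. It then forces $d(\RH)$ to be bijective.

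The difference is in that last step. The paper cites Theorem 1 of \cite{m-t}; you prove it, in effect via the mixed term of the pulled-back Goldman form (Proposition \ref{proposition: symplectic}, which the paper later uses quantitatively in Lemma \ref{lemma: horizontal lower bound}). This makes your argument self-contained, but three repairs are needed.

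\textbf{First repair.} Write $\dot A=d_A\zeta$ with $\zeta:X\to\slc$; your $\xi$ clashes with the Goldman form. Pairing with $A$ alone only gives $\langle\mu,\dett(A)\rangle=0$, a single linear condition on $\mu$. What you need is the identity $d\,\tr(\Phi\zeta)=-\tr(\Phi\wedge\bar\partial\zeta)=-\tr(\Phi\wedge\mu A)$ for every $\Phi\in\sl_2(\Omega^1(X))$. It gives $\langle\mu,\tr(A\Phi)\rangle=0$ for all $\Phi$, and then (\ref{eq-max}) gives $[\mu]=0$. So it is $\Phi$ that varies, not $\zeta$, which is determined by the tangent vector.

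\textbf{Second repair.} $[\mu]=0$ only means $\mu=\bar\partial v$, not $\mu=0$. Then $\bar\partial(\zeta-A(v))=0$, so $\zeta=A(v)+\zeta_0$ with $\zeta_0$ constant. Hence $\dot A=d_A(A(v))+[A,\zeta_0]$ is an infinitesimal reparametrisation plus an infinitesimal conjugation, and both are zero in $T_{(X,A)}\Syst$.

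\textbf{Third repair.} The identity $\RH^*\xi=c\,\dett^*\eta_\can$ is not the obstacle you fear, because you do not need it. $\RH^*\xi$ vanishes on pairs of vertical vectors, since it is the wedge of two $(1,0)$-forms on a curve. Its vertical--horizontal part is $-\langle\mu,d(\dett)(\dot A_2)\rangle$, which is a perfect pairing when $d(\dett)$ is bijective. So $\RH^*\xi$ is nondegenerate whatever its horizontal--horizontal part is. Your scaling identity does hold, since $A\mapsto tA$ acts on tangent vectors by $(\mu,\dot A)\mapsto(\mu,t\dot A)$.

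Your remark on irreducibility addresses a point the paper leaves implicit, and your fix works. An invariant line of the monodromy is a flat, hence holomorphic, degree-zero subbundle of $X\times\C^2$, hence a constant line. So $A$ is conjugate to an upper triangular matrix and $\dett(A)=-\alpha^2$. These values form a closed dilation invariant cone in $\B$ of dimension at most $g<3g-3$, so $\B^{\ur}$ may be chosen to avoid it. Then $V$ may be intersected with the open, dilation invariant set $\Syst^s$.

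This choice is genuinely needed in genus two. There $\dett:\sl_2(\Omega^1(X))\sslash\SLC\to\QD(X)$ is an isomorphism, so nothing else excludes these reducible points. For the rest of the paper it is harmless, since $\B''\subset\B'$ contains no squares.
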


\begin{proof}
Suppose
$(X_0,A)\in \dett^{-1}(\B^{\ur})$, where  
 \[A=\begin{bmatrix}
            \alpha & \beta\\
            \gamma & -\alpha
        \end{bmatrix}\]
        for some $\alpha,\beta,\gamma\in \Omega^1(X_0)$. Since $d(\dett)$ is surjective at $A$, it follows that the equality (\ref{eq-max}) holds. Thus, the equality (\ref{eq-max}) holds at each point of $\dett^{-1}(\B^{\ur})$. 
        
Since the condition    (\ref{eq-max})  is dilation invariant, there exists a dilation invariant  neighbourhood $V\subset \Syst$ of $\dett^{-1}(\B^{\ur})$ so that (\ref{eq-max}) holds at every point of $V$.  By Theorem 1 in \cite{m-t} this means that $d(\RH)$ is bijective    at each point of $V$. \end{proof}

\section{WKB analysis}\label{section-WKB}

The next three sections are aimed at proving Theorem \ref{thm-3 weak}. The proof relies on computing the  projective length functions of the representations  $\RH(X,tA)$ and $\NAH^{-1}(E,t\theta)$ when $t\to \infty$.
This is the content of Lemma \ref{lemma: WKB-0} and Lemma \ref{lemma: WKB-1} below which essentially follow from the results of Mochizuki in \cite{b-d-h-h} and  \cite{mochizuki}.

\subsection{The two lemmas}

Let $\phi\in \B$ be a  holomorphic quadratic differential. Given a curve  $\gamma\subset X_0$ we define its width by
$$
w_\phi(\gamma)=\int_\gamma |\Real\big(\sqrt{\phi(z)} \, dz\big)|.
$$
We say that $\gamma$ is a WKB-curve with respect to $\phi$  if $\gamma$ is transverse to the vertical foliation of $\phi$ (in particular, such $\gamma$ stays away from the zeroes of $\phi$).

Let $\rho\in \Rep$, and suppose that $\gamma\subset \Sigma$ is a closed curve. Considering 
$\gamma$ as a conjugacy class in $\pi_1(\Sigma)$,  we let $\chi_\rho(\gamma)$  denote the character of $\rho(\gamma)$ (that is, $\chi_\rho(\gamma)$ is the trace of the conjugacy class  $\rho(\gamma)$).

\begin{lemma}\label{lemma: WKB-0} 
Let $(X_n,A_n)\in \Syst$ be a sequence converging to some $(X_0,A_0)$ with $\dett (A_0) \neq 0$.
Suppose $t_n>0$ is sequence such that $t_n\to \infty$ when $n\to \infty$, and let $\rho_n=\RH(X_n,t_nA_n)$.
Then for any closed curve  $\gamma\subset X_0$, we have
$$
 \limsup_{n\to\infty}\frac{1}{t_n}\log|\chi_{\rho_{n}}(\gamma)|\leq w_{\dett (A_0)}(\gamma).
 $$
 Moreover, if $\gamma$ is a WKB-curve with respect to $\dett (A_0)$ then the limit exists and the equality holds.
\end{lemma}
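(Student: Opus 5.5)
The plan is a WKB analysis of the parallel transport of $\nabla_n=d+t_nA_n$ along $\gamma$, split into a crude upper bound and a sharp asymptotic along WKB-curves. Since $X_n\to X_0$, we first identify $X_n$ with $X_0$ via quasiconformal maps close to the identity. Pulling back $A_n$ gives matrices of $1$-forms $\wt A_n$ on $X_0$ (no longer holomorphic, but smooth), with $\wt A_n\to A_0$ uniformly on compact pieces of $\gamma$. Then $\chi_{\rho_n}(\gamma)=\tr P_n(\gamma)$, where $P_n$ is the parallel transport of $d+t_n\wt A_n$ along a fixed smooth representative of $\gamma$.

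\textbf{Upper bound.} Write $\phi_0=\dett(A_0)$. Away from the zeros of $\phi_0$, the eigenvalues of $A_0$ along $\gamma$ are $\pm\sqrt{\phi_0}$ (recall $\dett(A)=-\det$ up to sign convention, so the eigenvalues are $\pm\sqrt{-\det A}$). We diagonalise $A_n$ by a gauge $g_n$ that is bounded together with its derivative, uniformly in $n$, on the complement of small discs around the zeros. The transformed connection is then
\[
d+t_n\,\mathrm{diag}\bigl(\lambda_n,-\lambda_n\bigr)+g_n^{-1}dg_n .
\]
A Gr\"onwall estimate for this connection gives
\[
\|P_n\|\le C\exp\Bigl(t_n\int_\gamma|\re\lambda_n|+C'\Bigr),
\]
with $\lambda_n\to\sqrt{\phi_0}$. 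Near the zeros of $\phi_0$ we only use the trivial bound $\exp(t_n\int|A_n|)$ over arcs of length $\delta$. Because $\sqrt{\phi_0}$ is small there, that contribution is $t_n\cdot o_\delta(1)$. Since $|\tr P|\le 2\|P\|$, taking $\frac1{t_n}\log$, then $\limsup$, then $\delta\to0$ yields $\limsup\le w_{\phi_0}(\gamma)$. We also have the freedom to homotope $\gamma$, but we do not need it because the bound is taken for the fixed representative. A rotation of $\gamma$ into a non-geodesic position does not matter here.

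\textbf{Lower bound on WKB-curves.} If $\gamma$ is transverse to the vertical foliation, then $\re(\sqrt{\phi_0}\,dz)$ has constant nonzero sign along $\gamma$, and $\gamma$ avoids the zeros, so the diagonalising gauge exists globally along $\gamma$. For $n$ large the same holds for $\phi_n=\dett(\wt A_n)$. In the diagonal gauge, the standard asymptotic (Levinson-type) WKB argument applies. One component grows like $\exp(t_n\int_\gamma\re\lambda_n)$ and strictly dominates the other. The off-diagonal error $g_n^{-1}dg_n$ is $O(1)$ and can be absorbed by an iteration: choose a further gauge $\mathrm{Id}+O(1/t_n)$ that kills the off-diagonal terms up to $O(1/t_n)$. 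Then
\[
P_n=g_n(\text{end})\,\mathrm{diag}\bigl(e^{t_n\int\lambda_n+O(1)},\,e^{-t_n\int\lambda_n+O(1)}\bigr)(1+o(1))\,g_n(\text{start})^{-1},
\]
and since $\gamma$ is closed the conjugating gauges cancel in the trace. The sign of the square root is monodromy-invariant along $\gamma$, because $\re\sqrt{\phi_0}\,dz$ does not vanish; hence the diagonalisation is single-valued along the loop. It follows that
\[
|\chi_{\rho_n}(\gamma)|=e^{t_n w_{\phi_n}(\gamma)+O(1)},
\]
and $w_{\phi_n}(\gamma)\to w_{\phi_0}(\gamma)$, so the limit exists and equals $w_{\phi_0}(\gamma)$. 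This is essentially Mochizuki's argument in \cite{mochizuki}.

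The main obstacle is the upper bound near the zeros of $\phi_0$, where the diagonalisation degenerates. There, the $O(1)$ control of $g_n^{-1}dg_n$ fails, and one must check that the loss is $o(t_n)$ uniformly in $n$. The first thing to try is the crude norm bound on short arcs combined with continuity of $A_n\to A_0$. If that proves insufficient, the fallback is to perturb $\gamma$ so that it crosses the zeros transversally.
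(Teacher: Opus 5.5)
Your proposal is correct and is essentially the paper's argument. You diagonalise $\gamma^*(d+t_nA_n)$ along the eigenline splitting, get the upper bound from the Gr\"onwall estimate $\|f\|\le\exp\bigl(\int t_n|\re\lambda_n|+O(1)\bigr)$, and on a WKB-curve use that the splitting closes up around the loop (the paper's diagonal transition matrix $T$) so the dominant exponential controls the trace. The one difference is that you handle zeros of $\dett(A_0)$ on $\gamma$ with a crude bound on short arcs, where the paper perturbs $\gamma$ off them. That works, but the reason is that the bad arcs have total length $O(\delta)$ while $|A_n|$ stays uniformly bounded, not that $A_0$ is small there: at a zero of $\dett(A_0)$ the matrix $A_0$ is nilpotent and in general nonzero.
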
 
 \begin{lemma}\label{lemma: WKB-1} Let $(E_n,\theta_n)\in \M$ be a sequence such that 
 $\dett(\theta_n)$ converges to some $\phi_0\in \B$ with $\phi_0\ne 0$. Suppose $t_n>0$  is such that $t_n\to \infty$ when $n\to \infty$, and let $\rho_n=\NAH^{-1}(E_n,t_n\theta_n)$.
Then for any closed curve  $\gamma\subset X_0$, we have
$$
 \limsup_{n\to\infty}\frac{1}{2t_n}\log|\chi_{\rho_{n}}(\gamma)|\leq w_{\phi_{0}}(\gamma).
 $$
Moreover, if $\gamma$ is a WKB-curve with respect to $\phi_0$ then the limit exists and the equality holds.
\end{lemma}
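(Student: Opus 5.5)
The plan is to reduce Lemma \ref{lemma: WKB-1} to an asymptotic statement about parallel transport of the flat connection $D_n=\nabla_{h_n}+t_n\theta_n+t_n\theta_n^{\dagger_{h_n}}$ along $\gamma$. Here $h_n$ is the harmonic metric for $(E_n,t_n\theta_n)$. The starting point is Mochizuki's asymptotic decoupling for large Higgs fields \cite{mochizuki}. Away from the zeros of $\phi_0$ (which are isolated and fixed in the limit), the Higgs field $\theta_n$ has, for $n$ large, two distinct eigen-one-forms $\pm\lambda_n$ with $\lambda_n^2=-\dett(\theta_n)\to-\phi_0$, locally $\lambda_n\to\pm i\sqrt{\phi_0}$ (up to the sign convention for $\dett$). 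The associated eigenline decomposition $E_n=L_n^+\oplus L_n^-$ becomes asymptotically $h_n$-orthogonal. More precisely, on compact subsets $K$ of the complement of the zeros,
\[
|[\theta_n,\theta_n^{\dagger}]|_{h_n}=O(e^{-c t_n}),
\]
and the metric $h_n$ is exponentially close to the decoupled metric $h_n^{L}$ on the eigenlines. I will take this estimate as the input from \cite{mochizuki}.

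Given this input, on $K$ the flat connection becomes $D_n=\nabla^{L}_n+t_n(\theta_n+\theta_n^\dagger)+R_n$, with $R_n$ exponentially small. In the eigenframe, $\theta_n+\theta_n^\dagger$ is diagonal with entries $\pm(\lambda_n+\bar\lambda_n)=\pm2\Real\lambda_n$. Hence along a path the transport is
\[
\mathrm{diag}\Big(\exp\big(\mp 2t_n\textstyle\int\Real\lambda_n\big)\Big)\cdot(\text{unitary})\cdot(1+o(1)).
\]
This is where the factor $2t_n$ comes from, as opposed to $t_n$ in Lemma \ref{lemma: WKB-0}.

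\textbf{Upper bound.} First I reduce to the case where $\gamma$ avoids the zeros of $\phi_0$, by homotoping $\gamma$ off them; this can be done at arbitrarily small cost in $w_{\phi_0}$. For the part of $\gamma$ away from the zeros, the operator norm of the transport is at most $\exp\big(2t_n\int|\Real\lambda_n|+o(t_n)\big)$ by Gronwall, and $|\chi_{\rho_n}(\gamma)|\le 2\|\text{hol}\|$. Matching $\int_\gamma|\Real\lambda_n|$ to the width requires care with the normalisation $\Real$ versus $\mathrm{Im}$ of $\sqrt{\phi_0}$, i.e. with which foliation is called vertical. Passing to the limit gives $\limsup\frac1{2t_n}\log|\chi|\le w_{\phi_0}(\gamma)$.

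\textbf{Equality for WKB-curves.} If $\gamma$ is transverse to the vertical foliation, then $\Real\lambda_n$ has constant sign along $\gamma$ for large $n$. Consequently one eigendirection is uniformly expanded at the rate $\exp(2t_n\int_\gamma|\Real\lambda_n|)$ and the other is contracted. The transport is therefore a product of near-diagonal hyperbolic matrices whose dominant direction is consistently propagated. I would verify that the dominant diagonal entry cannot be cancelled by the $o(1)$ errors or by the unitary rotations between local eigenframes: at each step the growth factor multiplies the same eigenline, by continuity of $L_n^\pm$ along $\gamma$. Once this is checked, the trace is $e^{2t_nw(\gamma)(1+o(1))}$.

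The main obstacle is the decoupling estimate itself, namely uniform exponential closeness of $h_n$ to the decoupled metric when $\theta_n$ varies with $n$, not just along a fixed ray $t\theta$. I would handle this by rereading Mochizuki's argument: it rests on a maximum principle applied to $|[\theta,\theta^\dagger]|$ with constants depending only on a lower bound for $|\phi|$ on $K$. Such a bound holds uniformly for $\dett(\theta_n)$ near $\phi_0$. A secondary worry is a possible global monodromy swap of the eigenlines along $\gamma$ (spectral cover sheets). Since $\gamma$ avoids the branch points, I would pass to a double cover if necessary; this changes the trace only by a sign, which is harmless.
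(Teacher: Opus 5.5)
Your proposal is essentially the paper's proof. Both use Mochizuki's decoupling; the paper needs only the bounded error $|D_{h_t}-D_{h_{t,0}}|_{h_{t,0}}\le C$ from his Theorem~1.4, so your exponentially small $R_n$ is more than enough. Both then work in a parallel eigenframe along $\gamma$ with unitary transition matrix, where the large term is $\pm 2t_n\Real\lambda_n(\dot\gamma)$, and use Gronwall for the upper bound and the dominant solution for WKB-curves. Your worry about $\Real$ versus $\mathrm{Im}$ is justified: the eigen-one-forms are $\pm\sqrt{-\dett(\theta)}$ rather than the $\pm\sqrt{\dett(\theta)}$ the paper writes. The same shift occurs in Lemma~\ref{lemma: WKB-0}, so it is harmless in Proposition~\ref{proposition: WKB}.

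The one slip is your fallback for an eigenline swap: going around $\gamma$ twice turns $\chi$ into $\chi^2-2$, not $\pm\chi$. No fallback is needed, though. On a WKB-curve $\Real\lambda_n(\dot\gamma)$ never vanishes, so the expanding eigenline is labelled consistently all the way around $\gamma$ and the transition matrix is diagonal, exactly as the paper argues. For the upper bound a swap is irrelevant, because only the operator norm enters.
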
  
The case when $\gamma$ is a WKB-curve in Lemma \ref{lemma: WKB-0} basically follows from Proposition A.2 in \cite{b-d-h-h}, and in Lemma \ref{lemma: WKB-1} from Theorem 1.5 of \cite{mochizuki}. To treat the case when $\gamma$ is a not a WKB-curve one can for example cut  $\gamma$ into WKB-segments and then apply the same results on each such segment, or use the argument we present below which is probably simpler. For the sake of completeness we sketch the proofs of the two lemmas in the remainder of this section (the reader interested in the application of these  lemmas may initially skip the rest of this section).

\subsection{Proof of Lemma \ref{lemma: WKB-0}} By perturbing $\gamma$, we may assume that 
$\gamma$ does not pass through any zero of $\dett(A_0)$. To simplify the notation we write $(X,A)$ for $(X_n,A_n)$,  $t$ for $t_n$, and $\rho$ for $\rho_n$.

Note that $A\in\sl_2(\Omega^1(X))\cong H^0(X,K_X\otimes\End_0(\O_X^{\oplus2}))$ is a form valued trace zero endomorphism of $\O_X^{\oplus2}$ (the trivial rank two bundle over $X$). Locally, and away from the zeros of $\dett (A)$, we have a decomposition $\O_X^{\oplus2}=V_+\oplus V_-$ into eigenspaces of $A$. Via $\gamma:[0,1]\rightarrow X$ this pulls back to a globally defined decomposition $\gamma^*\O_X^{\oplus2}=\gamma^*V_+\oplus\gamma^*V_-$. 

Take $C^\infty$ basis $v_{\pm}(s)$ of $\gamma^*V_{\pm}$, and express $\gamma^*(d+tA)$ in terms of this basis. We get
    \[\gamma^*(d+tA)=d+(B(s)+tA(s))ds,\]
    where $B(s)$ is uniformly bounded with respect to $t$, and
    \begin{align*}
        A(s)=
       \begin{bmatrix}
            a(s) & 0\\
            0 & -a(s)
       \end{bmatrix},\quad\pm a(s)ds=\gamma^*(\pm\sqrt{\dett (A)}).
    \end{align*}
    Consider the ODE
    \begin{align*}
        \begin{cases}
        &\frac{d}{ds}f_t(s)+D(s)f_t(s)=0,\\
        &f_t(0)=I_2.
        \end{cases}
    \end{align*}
    where $D(s)=B(s)+tA(s)$.
    We have
    \[\chi_{\rho}(\gamma)=\tr\left(Tf_t(1)\right),\]
    where $T$ is the transition matrix from $v_{\pm}(1)$ to $v_{\pm}(0)$. By standard ODE theory, we have
    \[\|f_t(1)\|\leq \exp\left(\int_0^1\frac{1}{2}\|D(s)+D(s)^*\|ds\right).\]
    Note that the right hand side is dominated by
    \[\exp\left(\int_0^1\frac{1}{2}\|B(s)+B(s)^*\|ds\right)\exp\left(t\int_0^1|\re (a(s))|ds\right),\]
    and that
    \[\int_0^1|\re(a(s))|ds=w_{\dett (A)}(\gamma).\]
    Putting these together we get
    \[\log|\chi_{\rho}(\gamma)|\leq t\cdot w_{\dett (A)}(\gamma)+O(1).\]
    The first assertion of Lemma \ref{lemma: WKB-0} follows. As for the second assertion, $\gamma$ is a WKB-curve meaning that $\re(a(s))\neq0$ for all $s\in[0,1]$. Possibly by exchanging $V_+$ and $V_-$ we get $\re(a(s))>0$ for all $s\in[0,1]$. In particular, $a(0)$ and $a(1)$ having the same sign means that the transition matrix $T$ is diagonal. By standard ODE theory, we have
    \begin{align*}
        f_t(1)=e^{-\int_0^1b_{11}(s)+ta(s)ds}
        \left(\begin{bmatrix}
            1&0\\
            0&0
        \end{bmatrix}+o(1)\right),\,\quad t\rightarrow\infty.
    \end{align*}
    This implies
    \[\log|\chi_{\rho}(\gamma)|= t\cdot w_{\dett (A)}(\gamma)+O(1).\]
    The second assertion follows.

\subsection{Proof of Lemma \ref{lemma: WKB-1}}
    As above we may assume that $\gamma$ does not pass through any zero of $\phi_0$ and to simplify the notation we write $(E,\theta)$ for $(E_n,\theta_n)$, $t$ for $t_n$, and $\rho$ for $\rho_n$. Locally, away from the zeros of $\dett(\theta)$, we have a decomposition $E=E_+\oplus E_-$ into eigenspaces of $\theta$. Let $h_t$ be the harmonic metric on $(E,t\theta)$. By Theorem 1.4 in \cite{mochizuki} we have
    \begin{align}\label{eq7}
        |D_{h_t}-D_{h_{t,0}}|_{h_{t,0}}\leq C,
    \end{align}
    where $h_{t,0}$ is a Hermitian metric approximating $h_t$ (away from the zeros of $\dett(\theta)$). Furthermore, $h_{t,0}$ has the properties that the decomposition $E=E_+\oplus E_-$ is orthogonal with respect to $h_{t,0}$, and that $h_{t,0}|_{E_{\pm}}=h_t|_{E_\pm}$. Moreover, 
    \[D_{h_{t,0}}=d_{h_{t,0}}+t\theta+t{\theta}_{h_{t,0}}^*,\]
    where $d_{h_{t,0}}=\partial_{h_{t,0}}+\bar\partial$ is the Chern connection of $h_{t,0}$. Pulling back via $\gamma$, and trivialising $\gamma^*E_{\pm}$ by sections $e_{\pm}(s)$ which are parallel with respect to $d_{h_{t,0}}$, we get
    \[\gamma^*D_{h_t}=d+(B(s)+tA(s))ds.\]
    Here $B(s)$ is uniformly bounded with respect to $t$ (by (\ref{eq7})), and
    \begin{align*}
        A(s)=
       \begin{bmatrix}
            a(s) & 0\\
            0 & -a(s)
       \end{bmatrix},\quad\pm a(s)ds=\pm\gamma^*\left(\sqrt{\dett (\theta)}+\overline{\sqrt{\dett(\theta)}}\right).
    \end{align*}
    Note that now the transition matrix $T$ is unitary with respect to $h_{t,0}$. The rest of the proof is the same as in the previous subsection.

\section{Constructing a WKB-curve} To be able to simultaneously apply Lemma \ref{lemma: WKB-0} and
Lemma \ref{lemma: WKB-1}, we need to be able to find a closed WKB-curve whose widths  distinguish between different quadratic differentials.
\begin{lemma}\label{lemma: finding curve} Let $\phi,\psi\in \B$ be such that $\phi\ne \psi$, and 
$\|\phi\|\geq\|\psi\|$. Then there exists a closed curve $\gamma\subset X_0$ which is a WKB-curve with respect to  $\phi$, and which satisfies the strict inequality
$$
w_\psi(\gamma)<w_\phi(\gamma).
$$
\end{lemma}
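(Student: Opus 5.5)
The plan is to build $\gamma$ by closing up a long straight trajectory of the flat metric $|\phi|$, in a direction chosen so that the trajectory equidistributes. We then compare $w_\phi$ and $w_\psi$ using an averaging argument over directions.

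\textbf{Directional trajectories.} Away from the zeros of $\phi$, use a natural coordinate $\zeta=\int\sqrt{\phi}\,dz$. For $\theta\in(-\pi/2,\pi/2)$, a $\theta$-trajectory is a curve along which $\arg d\zeta=\theta$. Such a curve is transverse to the vertical foliation, where $\re\, d\zeta=0$. A trajectory segment of $|\phi|$-length $L$ therefore has $w_\phi$ equal to $L\cos\theta$.

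\textbf{Equidistribution.} By Kerckhoff--Masur--Smillie, for almost every $\theta$ the directional flow of $\phi$ is uniquely ergodic with respect to the area $|\phi|$. Fix such a $\theta$. Take a long trajectory segment that starts in a small flat disc $D$ avoiding the zeros and returns to $D$. Close it by a short segment inside $D$, pointing in a direction transverse to the vertical foliation. This gives a closed curve $\gamma_L$ that is a WKB-curve for $\phi$, and the closing segment contributes $O(1)$ to both widths.

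By unique ergodicity, applied to the continuous integrand $|\re(e^{i\theta}\sqrt{\psi/\phi})|$, we get
$$
\frac{1}{L}w_\phi(\gamma_L)\to \cos\theta,\qquad \frac{1}{L}w_\psi(\gamma_L)\to \frac{1}{\|\phi\|}\int_{X_0}\bigl|\re\bigl(e^{i\theta}\sqrt{\psi/\phi}\bigr)\bigr|\,|\phi| =: \frac{I(\theta)}{\|\phi\|}.
$$
The integrand is continuous and bounded by $|\psi|^{1/2}|\phi|^{1/2}$, and its absolute value is well defined despite the sign ambiguity of the square root. So it suffices to find a generic $\theta$ with $I(\theta)<\|\phi\|\cos\theta$. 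Then for $L$ large the closed curve $\gamma_L$ satisfies $w_\psi(\gamma_L)<w_\phi(\gamma_L)$.

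\textbf{Averaging over directions.} The key identity is $\int_{-\pi/2}^{\pi/2}|\re(e^{i\theta}u)|\,d\theta=2|u|$, while $\int_{-\pi/2}^{\pi/2}\cos\theta\,d\theta=2$. Applying Cauchy--Schwarz and then $\|\psi\|\le\|\phi\|$ gives
$$
\int_{-\pi/2}^{\pi/2}I(\theta)\,d\theta=2\int_{X_0}|\psi|^{1/2}|\phi|^{1/2}\le 2\|\psi\|^{1/2}\|\phi\|^{1/2}\le 2\|\phi\| = \int_{-\pi/2}^{\pi/2}\|\phi\|\cos\theta\,d\theta .
$$
If the inequality is strict, then $I(\theta)<\|\phi\|\cos\theta$ on a set of positive measure. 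That set contains a uniquely ergodic direction, and we are done.

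\textbf{Equality case.} This is the main point requiring care. Equality forces $|\psi|=|\phi|$ pointwise. Hence $\psi/\phi$ is a holomorphic function of modulus one, so it is a constant, and $\psi=e^{2i\alpha}\phi$ with $e^{2i\alpha}\ne 1$.

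In that case $I(\theta)=\|\phi\|\,|\cos(\theta+\alpha)|$. Since $\alpha\not\equiv 0 \pmod \pi$, the set of $\theta\in(-\pi/2,\pi/2)$ with $|\cos(\theta+\alpha)|<\cos\theta$ is a nonempty open interval. An open interval has positive measure, so it again contains a uniquely ergodic direction.

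\textbf{Expected obstacle.} The main technical points are the equidistribution step and the closing-up step. Equidistribution requires an integrand that is continuous on all of $X_0$, including near the zeros of $\phi$; the bound $|\psi|^{1/2}|\phi|^{1/2}$ handles this. Closing up must produce a curve that is still a WKB-curve for $\phi$. I expect this to be routine once the short closing arc is taken to be straight in the flat chart and non-vertical.
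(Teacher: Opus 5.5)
Your argument is correct in substance, and it has the same skeleton as the paper's proof: a Kerckhoff--Masur--Smillie uniquely ergodic direction, ergodic averages of the two widths along a long straight $|\phi|$-trajectory, then closing up. Where you differ is in how you select the good direction.

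The paper normalises $\|\phi\|=1$ and shows $\int f_0\,d\mu<1$ for the horizontal direction of $\phi$ itself. It then uses $L^1$-continuity in $\theta$ to get an interval $[0,\theta_0]$ of directions on which the average of $f_\theta$ stays below $\cos\theta_0<\cos(\theta/2)$. No case split is needed there, because equality at $\theta=0$ forces $\psi/\phi\equiv 1$.

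You instead integrate over all directions, using $\int_{-\pi/2}^{\pi/2}|\re(e^{i\theta}u)|\,d\theta=2|u|$ together with Cauchy--Schwarz. This gives a positive-measure set of good directions at once. The price is a separate treatment of the equality case $\psi=e^{2i\alpha}\phi$, which you handle correctly.

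A side benefit of your route is that the Cauchy--Schwarz step is explicit. The paper's pointwise bound $f_0<|\psi|/|\phi|$ and its intermediate bound $\int f_0\,d\mu<\|\psi\|$ fail as written: for $\psi=c\phi$ with $0<c<1$ one gets $\int f_0\,d\mu=\sqrt{c}>\|\psi\|$. The correct pointwise bound is $f_0\le|\psi/\phi|^{1/2}$, and your estimate is exactly what then yields $\int f_0\,d\mu<1$.

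Two details in your write-up need repair.

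\emph{Continuity of the averaged function.} The function you average against $|\phi|$-arclength is $f_\theta=|\re(e^{i\theta}\sqrt{\psi/\phi})|$. This is not continuous on $X_0$: near a zero of $\phi$ that is not a zero of $\psi$ it is unbounded, with size governed by $|\psi/\phi|^{1/2}$. The bound $|\psi|^{1/2}|\phi|^{1/2}$ controls the density $f_\theta|\phi|$, so it only shows $f_\theta\in L^1(|\phi|)$. Hence unique ergodicity does not give convergence of the averages from every starting point. Use the Birkhoff ergodic theorem instead, as the paper does, and start the trajectory at a generic point of $D$.

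\emph{Closing up.} When $\phi$ is not a square (for instance, when it has a simple zero), its directional foliations are non-orientable. A return to $D$ may then come back with the branch of $\sqrt{\phi}$ reversed, and such a return cannot be closed into a curve transverse to the vertical foliation. Moreover, the closing segment must cross the vertical leaves in the same sense as the trajectory; being non-vertical is not enough. The paper's ``nearly horizontal'' closing arc leaves this implicit as well. You can handle it as follows. Run the flow on the orientation double cover, where $\phi=\omega^2$ and Kerckhoff--Masur--Smillie applies to $\omega$. Take returns to the same lift of $D$. Stop the trajectory at a point whose displacement back to the start makes a small angle with the flow direction.
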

The idea is to use the Birkhoff ergodic theorem and construct a suitable simple closed curve $\gamma$  which is nearly horizontal with respect to  $\phi$. Without loss of generality we may assume $\|\phi\|=1$. Thus, we have $\|\psi\|\le 1$. We prove the lemma in the remainder of this section.

\subsection{The probability measure} Define the probability measure $\mu$ on $X_0$ by letting $d\mu=|\phi|$.
For $\theta\in[0,2\pi)$, we define the function $f_\theta$ on $X_0$ by
$$
f_\theta(p)=\left|\re\sqrt{\frac{\psi}{e^{i\theta}\phi}}\right|(p), \quad\quad p\in X_0.
$$
Then $f_\theta\in L^1(\mu)$. Since $\psi\ne \phi$ it follows that for almost every $p\in X_0$ we have the strict inequality
$$
f_0(p)=\left|\re\sqrt{\frac{\psi}{\phi}}\right|(p)<\frac{|\psi|}{|\phi|}(p).
$$
This yields the strict inequality 
\begin{equation}\label{eq-cos-00}
\int_{X_{0}} f_0\, d\mu= \int_{X_{0}}\left|\re\sqrt{\frac{\psi}{\phi}}\right|\cdot|\phi| < \|\psi\| \le 
1.
\end{equation}
\begin{claim}\label{claim-cos} There exists $\theta_0>0$
such that the strict inequality
\begin{equation}\label{eq-cos}
\int_{X_{0}} f_\theta\, d\mu < \cos(\theta_0) 
\end{equation}
holds for every $\theta\in [0,\theta_0]$.
\end{claim}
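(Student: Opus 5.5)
We argue by continuity of $\theta\mapsto \int_{X_0} f_\theta\,d\mu$ at $\theta=0$, combined with the strict inequality (\ref{eq-cos-00}). Set $I(\theta)=\int_{X_0} f_\theta\,d\mu$ and $\delta = 1-I(0)>0$; by (\ref{eq-cos-00}), $I(0)<\|\psi\|\le 1$.

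\textbf{Continuity of $I$.} At every point $p$ which is not a zero of $\phi$ or $\psi$ (a finite set, hence $\mu$-null), choose a local branch of $\sqrt{\psi/\phi}$. Then
$$
f_\theta(p)=\left|\re\big(e^{-i\theta/2}\sqrt{\psi/\phi}(p)\big)\right| .
$$
This expression does not depend on the choice of branch, because of the absolute value. It is continuous in $\theta$, and it is bounded by $|\psi|/|\phi|(p)$. This bound is $\mu$-integrable, since $\int_{X_0} (|\psi|/|\phi|)\,d\mu=\|\psi\|\le 1$. Dominated convergence therefore makes $I$ continuous on $[0,2\pi)$, and in fact near $\theta=0$ approached from above. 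Explicitly,
$$
|f_\theta-f_0|\le \big|e^{-i\theta/2}-1\big|\,|\psi|/|\phi|,
$$
which gives the quantitative bound $|I(\theta)-I(0)|\le |e^{-i\theta/2}-1|\,\|\psi\|\le \theta/2$.

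\textbf{Choice of $\theta_0$.} Using the estimate above, for $\theta\in[0,\theta_0]$ we have
$$
I(\theta)\le I(0)+\theta_0/2 = 1-\delta+\theta_0/2 .
$$
Since $\cos\theta_0\ge 1-\theta_0^2/2$, it suffices to take $\theta_0>0$ small enough that
$$
\theta_0/2+\theta_0^2/2<\delta ,
$$
for instance $\theta_0=\min(\delta,1)/2$. Then $I(\theta)<\cos(\theta_0)$ for every $\theta\in[0,\theta_0]$, which is the claim.

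\textbf{Main point.} The only substantive step is the branch-independence and the pointwise Lipschitz bound in $\theta$. These follow from $\big||\re(uz)|-|\re z|\big|\le |u-1||z|$ for $|u|=1$, applied with $u=e^{-i\theta/2}$ and $z=\sqrt{\psi/\phi}(p)$. Everything else is elementary.
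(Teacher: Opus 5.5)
Your approach is the paper's own. The paper just invokes $f_\theta\to f_0$ in $L^1(\mu)$ together with $\int_{X_0} f_0\,d\mu<1$ and then shrinks $\theta_0$; you make the $L^1$-continuity quantitative through $\bigl||\re(uz)|-|\re z|\bigr|\le|u-1||z|$ and produce an explicit $\theta_0$.

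One step is wrong as written, however. With $z=\sqrt{\psi/\phi}(p)$ you have $|z|=(|\psi|/|\phi|)^{1/2}(p)$, not $(|\psi|/|\phi|)(p)$. So $|\psi|/|\phi|$ does not dominate $f_\theta$; the bound can fail wherever $|\psi|<|\phi|$. The correct dominating function is $(|\psi|/|\phi|)^{1/2}$. The repair is free: by Cauchy--Schwarz,
$\int_{X_0}(|\psi|/|\phi|)^{1/2}\,d\mu=\int_{X_0}|\psi|^{1/2}|\phi|^{1/2}\le\|\psi\|^{1/2}\|\phi\|^{1/2}\le 1$.
You therefore still get $|I(\theta)-I(0)|\le|e^{-i\theta/2}-1|\le\theta/2$, and the rest of your argument goes through verbatim.

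The same slip affects the inequality $I(0)<\|\psi\|$ that you quote from (\ref{eq-cos-00}). For $\psi=c\phi$ with $0<c<1$ one has $I(0)=\sqrt{c}>c$, so that inequality is false in general. You only use $I(0)<1$, and that does hold: $I(0)\le\|\psi\|^{1/2}\le 1$. Equality throughout would force $\psi/\phi$ to be a nonnegative constant with $\|\psi\|=1$, i.e.\ $\psi=\phi$.
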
 
\begin{proof} Since $f_\theta\to f_0$ in $L^1(\mu)$, and using (\ref{eq-cos-00}), we can find $q<1$, and $\theta_1>0$, such that 
$$
\int_{X_{0}} f_\theta\, d\mu\le q
$$
for every $\theta\in [0,\theta_1]$. By choosing $0<\theta_0 \le \theta_1$ small enough we can arrange that $q \le \cos(\theta_0)$ which proves the claim.
\end{proof}

\subsection{The Birkhoff Ergodic Theorem} As shown by Kerckhoff-Masur-Smillie in \cite{k-m-s}, for almost all $\theta\in[0,2\pi)$ the horizontal foliation of $e^{i\theta}\phi$ is uniquely ergodic. Fix such $\theta\in [0,\theta_0]$. Assuming $p\in X_0$ is such that the horizontal trajectory of the quadratic differential $e^{i\theta}\phi$ which contains $p$ is non-singular, we let $\alpha_p:\R \to X_0$ denote the isometric parametrisation of this trajectory which is uniquely defined by the condition $\alpha_p(0)=p$.

Since the horizontal flow (with respect to $e^{i\theta}\phi$)  is ergodic on $X_0$, and since $f_\theta\in L^1(\mu)$, it  follows from the Birkhoff Ergodic Theorem for flows  that
$$
\lim_{T\to \infty} \frac{1}{T}\int_0^T (f_\theta\circ \alpha_p)(t)\,dt=
\int_{X_{0}} f_\theta\, d\mu
$$
for almost every $p\in X_0$.  Combining this with (\ref{eq-cos}) yields the following claim.

\begin{claim}\label{claim-birk-2} There exist $p\in X_0$, $\theta\in [0,\theta_0]$, and $T_0>0$, such that  the inequality
\begin{equation}\label{eq-birk} 
\frac{1}{T}\int_0^T (f_\theta\circ \alpha_p)(t)\,dt \le \cos(\theta_0)
\end{equation}
holds for every $T\ge T_0$.
\end{claim}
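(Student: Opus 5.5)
The plan is to combine Claim \ref{claim-cos} with the Kerckhoff-Masur-Smillie theorem and the Birkhoff Ergodic Theorem for flows, as set up just before the statement. The set $\Theta\subset[0,2\pi)$ of directions $\theta$ for which the horizontal foliation of $e^{i\theta}\phi$ is uniquely ergodic has full Lebesgue measure. Since $[0,\theta_0]$ has positive measure, $\Theta\cap[0,\theta_0]$ is nonempty, and I fix any $\theta$ in it. The horizontal flow of $e^{i\theta}\phi$ (unit-speed flow along the horizontal trajectories, parametrised isometrically by $\alpha_p$) preserves the measure $\mu$ given by $d\mu=|\phi|$. This is because, in natural coordinates $w$ with $e^{i\theta}\phi=dw^2$, the flow is translation $w\mapsto w+t$ and $|\phi|=|dw|^2$ is Lebesgue measure. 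The flow is defined for all times off the union of the (finitely many) singular horizontal leaves, which is a $\mu$-null set.

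Unique ergodicity of the foliation gives ergodicity of the flow with respect to $\mu$. Here $\mu$ is, up to scaling, the unique transverse-measure-compatible invariant probability measure. Because $f_\theta\in L^1(\mu)$, the Birkhoff Ergodic Theorem for flows gives, for $\mu$-a.e.\ $p$,
$$
\lim_{T\to\infty}\frac1T\int_0^T(f_\theta\circ\alpha_p)(t)\,dt=\int_{X_0}f_\theta\,d\mu .
$$
By Claim \ref{claim-cos}, the right-hand side is strictly less than $\cos(\theta_0)$.

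To conclude, I choose any $p$ in the full-measure set where the limit holds and whose horizontal trajectory is non-singular. Writing $\varepsilon=\cos(\theta_0)-\int f_\theta\,d\mu>0$, the definition of the limit gives $T_0$ such that the time averages lie within $\varepsilon$ of the integral for all $T\ge T_0$. This yields (\ref{eq-birk}).

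The only step needing care is justifying ergodicity of the horizontal flow with respect to $\mu$ from the unique ergodicity statement of \cite{k-m-s}. Concretely, one has to check that the flow-invariant measure $\mu=|\phi|$ is exactly the product of the transverse measure $|\mathrm{Im}\sqrt{e^{i\theta}\phi}|$ with arc length, so that uniqueness of the transverse measure forces every invariant set to have measure $0$ or $1$. I would handle this by working in natural coordinates, where the flow is a translation and $\mu$ is locally Lebesgue measure. In those coordinates an invariant set is a union of leaves, and its measure is determined by a transverse measure; unique ergodicity therefore gives triviality.
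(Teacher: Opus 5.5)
Your proposal is correct and follows the paper's argument. You choose a uniquely ergodic direction $\theta\in[0,\theta_0]$ via Kerckhoff--Masur--Smillie, apply the Birkhoff ergodic theorem for the horizontal flow to $f_\theta\in L^1(\mu)$, and compare the limit with Claim \ref{claim-cos}; your added justification that unique ergodicity of the foliation forces ergodicity of $\mu=|\phi|$ (restricting $\mu$ to an invariant set gives arc length times a holonomy-invariant transverse measure, hence a multiple of $\mu$) fills in a step the paper only asserts, and the one point both you and the paper leave implicit is that a non-orientable horizontal foliation carries a genuine flow only on the orientation double cover, which does not affect the conclusion because $f_\theta$ is invariant under the deck involution.
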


\subsection{Proof of Lemma \ref{lemma: finding curve}}
Fix $p\in X_0$, and $\theta\in [0,\theta_0]$,  from Claim \ref{claim-birk-2}. Let $\alpha^T_p$ be the arc $\alpha_p([0,T])$. 
Since $\alpha^T_p$ is a horizontal arc for $e^{i\theta}\phi$, it follows that for every $T>0$ we have the equalities
$$
w_\psi(\alpha^T_p)=\int_0^T (f_\theta\circ \alpha_p)(t)\,dt,\quad\quad w_\phi(\alpha^T_p)=T\cos(\theta/2).
$$
Combining this with (\ref{eq-birk}) yields that for every $T\ge T_0$ we have 
$$
w_\psi(\alpha^T_p)\le T\cos(\theta_0)=w_\phi(\alpha^T_p)+ T(\cos(\theta_0)-\cos(\theta/2)).
$$
Let $\delta_0=T_0(\cos(\theta_0/2)-\cos(\theta_0))>0$. Then, for every $T\ge T_0$ we have
\begin{equation}\label{eq-birk-1} 
w_\psi(\alpha^T_p)\le w_\phi(\alpha^T_p)-\delta_0.
\end{equation}

Since the horizontal flow is uniquely ergodic for $e^{i\theta}\phi$, it follows that there exists and increasing sequence $T_n$ of times  such that $\alpha_p(T_n)\to p$ when $n\to \infty$. Moreover, we can choose $T_n$ so that one can close up the arc $\alpha^{T_{n}}_p$ by inserting a very short, nearly horizontal, interval such that the new closed curve $\gamma_n$ has the following properties  
$$
|w_\psi(\gamma_n)-w_\psi(\alpha^T_p)|\le \frac{\delta_0}{3},\quad \quad  
|w_\phi(\alpha^T_p)-w_\phi(\gamma_n)| \le \frac{\delta_0}{3}.
$$
Replacing this in (\ref{eq-birk-1}) we get that 
$$
w_\psi(\gamma_n)\le w_\phi(\gamma_n)-\frac{\delta_0}{3}.
$$
Letting $\gamma=\gamma_n$ proves the lemma.

\section{Asymptotic of $\pi\circ F$}

The purpose of this section is to prove Proposition \ref{proposition: WKB} which is derived from the WKB analysis discussed above. This  proposition is  used in the proofs of Theorem \ref{thm-3 weak} and Theorem \ref{thm-3}.

Recall that  $\QDD$  denotes the fibre bundle of holomorphic quadratic differentials over $\Teich$, and  let $Q:\QDD \to \Teich$ denote the corresponding fibration. Choose a (smooth) trivialisation
\begin{equation}\label{eq-QL}
(Q,L):\QDD\to \Teich \times \B.
\end{equation}
Also, by $d_\Teich$ we denote the Teichm\"uller metric on $\Teich$.

\begin{proposition}\label{proposition: WKB} Suppose that  $W\subset \B''$ is a 
closed and dilation invariant subset. Then there exist 
\begin{itemize}
\item[(a)]  a dilation invariant neighbourhood $V\subset \QDD$ of $W$, 
\vskip .1cm
\item[(b)] a function $\delta:[0,\infty) \times (0,\infty) \to [0,\infty)$, with  $\delta(s,t)\to 0$, when both $s\to 0$ and $t\to \infty$, so that for every $(X,A)\in \dett^{-1}(V(t))$ we have
$$
\left\|4(\pi\circ F)(X,A)- L\big(\dett (A)\big) \right\|\le  \,\delta\big(d_\Teich(X,X_0),t\big) \|L\big(\dett (A)\big)\|.
$$
\end{itemize}
\end{proposition}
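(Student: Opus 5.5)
Since both sides of the inequality in (b) scale linearly under the dilations $A\mapsto tA$ (note $\dett(tA)=t^2\dett(A)$, while $\pi\circ F$ is only asymptotically quadratic), the plan is to argue by contradiction using a compactness argument on the unit sphere of $\B$, combined with Lemma \ref{lemma: WKB-0}, Lemma \ref{lemma: WKB-1} and Lemma \ref{lemma: finding curve}. First I will choose $V$: by Lemma \ref{lemma-r-3} and the fact that $W\subset\B''\subset\B^{\ur}$ is closed in $\B\setminus\{0\}$, the set $W\cap\{\|\phi\|=1\}$ is compact. So there is a neighbourhood $V_1$ of it in $\QDD$ over which $\dett$ is a finite covering with compact closure of the preimage in $\Syst$. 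I then let $V$ be the dilation cone $\{s\phi:\phi\in V_1,\ s>0\}$. The compactness of $\dett^{-1}(\overline{V_1})$ is what makes the limiting argument work.

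Next I define $\delta(s,t)$ as the supremum of
$$\frac{\|4(\pi\circ F)(X,A)-L(\dett A)\|}{\|L(\dett A)\|}$$
over $(X,A)\in\dett^{-1}(V(t))$ with $d_\Teich(X,X_0)\le s$. It suffices to show $\delta(s_n,t_n)\to 0$ whenever $s_n\to 0$ and $t_n\to\infty$. Suppose instead there are $(X_n,B_n)$ with $X_n\to X_0$ and $\|L(\dett B_n)\|\to\infty$ on which the ratio stays $\ge\epsilon$. I write $B_n=\tau_nA_n$ with $\dett(A_n)\in V_1$ normalized, so that $\tau_n\to\infty$. After passing to a subsequence, $(X_n,A_n)\to(X_0,A_0)$ with $\phi_0:=\dett A_0\in W$, $\|\phi_0\|=1$, and $L(\dett A_n)\to\phi_0$ (using that $L$ restricts to the identity on the fibre over $X_0$).

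On the Higgs side, set $\psi_n=(\pi\circ F)(X_n,B_n)$ and write $\psi_n=\tau_n^2\lambda_n\hat\psi_n/4$ with $\|\hat\psi_n\|=1$ and $\lambda_n>0$. Let $\rho_n=\RH(X_n,\tau_nA_n)$. Lemma \ref{lemma: WKB-0}, applied to every closed curve, bounds $\frac1{\tau_n}\log|\chi_{\rho_n}(\gamma)|$ above by $w_{\phi_0}(\gamma)+o(1)$, with equality on WKB-curves for $\phi_0$. Writing $\NAH(\rho_n)=(E_n,t'_n\theta_n)$ with $\dett\theta_n=\hat\psi_n$ and $t'_n=\tau_n\sqrt{\lambda_n}/2$, Lemma \ref{lemma: WKB-1} gives the same kind of asymptotics with rate $2t'_n=\tau_n\sqrt{\lambda_n}$ and width $w_{\hat\psi}$, where $\hat\psi_n\to\hat\psi$ along a subsequence. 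Comparing these on suitable WKB-curves will force two things:
\begin{itemize}
\item[(i)] $\lambda_n$ stays bounded and bounded away from $0$. Boundedness follows because a WKB-curve for $\hat\psi$ has $w_{\hat\psi}>0$, while its growth rate is at most $\tau_n w_{\phi_0}$. The lower bound is the symmetric argument with a WKB-curve for $\phi_0$. The case $t'_n$ bounded must be handled separately; there the characters stay bounded, contradicting the growth along a $\phi_0$-WKB curve.
\item[(ii)] After passing to the limit $\lambda_n\to\lambda$, the quadratic differentials $\phi_0$ and $\lambda\hat\psi$ must coincide, since $w_{\lambda\hat\psi}=\sqrt\lambda\, w_{\hat\psi}$. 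If they differed, say with $\|\phi_0\|\ge\|\lambda\hat\psi\|$, Lemma \ref{lemma: finding curve} produces a $\phi_0$-WKB curve $\gamma$ with $w_{\lambda\hat\psi}(\gamma)<w_{\phi_0}(\gamma)$. On this curve the RH side gives the exact rate $w_{\phi_0}(\gamma)$ and the Higgs side gives a rate at most $w_{\lambda\hat\psi}(\gamma)$, which is a contradiction. In the case $\|\lambda\hat\psi\|\ge\|\phi_0\|$ the roles are swapped: I take a $\lambda\hat\psi$-WKB curve and use the equality statement of Lemma \ref{lemma: WKB-1} against the upper bound in Lemma \ref{lemma: WKB-0}.
\end{itemize}

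Once (ii) holds, $4\psi_n/\tau_n^2\to\phi_0$ and $L(\dett A_n)\to\phi_0$. Hence the ratio tends to $0$, which contradicts the assumption that it stays $\ge\epsilon$.

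The main obstacle is the scaling bookkeeping. Lemma \ref{lemma: WKB-1} is stated for a fixed normalized sequence $\theta_n$ with convergent determinant, so I must normalize $\pi\circ F$ first, which requires $\psi_n\neq 0$. Ruling out the degenerate case $\psi_n=0$ (or $\lambda_n\to0$) uses the fact that on the Hitchin fibre over $0$, or near it, characters grow subexponentially in $\tau_n$; I would obtain this from properness of $\pi$ together with the $\limsup$ part of Lemma \ref{lemma: WKB-1} applied at small determinant. I also need to check that the $\theta$-dependence of the trivialisation $L$ near $X_0$ disappears in the limit, which holds because $L$ is continuous and restricts to the identity over $X_0$.
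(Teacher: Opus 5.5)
Your proposal is correct and is essentially the paper's argument. Both argue by contradiction, normalise the $\slo$-system and the Higgs side, use the compactness supplied by Lemma \ref{lemma-r-3}, and compare character growth along the WKB-curves of Lemma \ref{lemma: finding curve} via Lemmas \ref{lemma: WKB-0} and \ref{lemma: WKB-1}. The only differences are minor:
\begin{itemize}
\item \emph{Order of the comparison.} You first bound the scale ratio $\lambda_n$ above and below, and then identify $\lambda\hat\psi=\phi_0$. The paper instead first proves convergence of the unit-norm directions and then convergence of the scale.
\item \emph{Degenerate cases.} You explicitly handle a vanishing or bounded Higgs-side scale via properness of $\pi$; the paper leaves this case implicit.
\item \emph{The limit point.} With your $V$ fixed, the limit $\dett A_0$ need only lie in $\overline{V_1}$, not in $W$. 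This is harmless, since your argument only uses $\dett A_0\neq 0$.
\end{itemize}
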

\begin{proof}  We argue by contradiction. 

\begin{lemma}\label{lemma-contra}
Suppose that the proposition does not hold. 
Then  exist $A_0\in \slo(\Omega^1(X_0))$, with $\|\dett(A_0)\|=1$, and 
sequences $X_n\in \Teich$, $A_n \in \slo(\Omega^1(X_n))$, and $t_n>0$,  satisfying the equalities 
$$
\lim_{n\to \infty} X_n=X_0, \quad \lim_{n\to \infty} A_n=A_0, \quad \lim_{n\to \infty} t_n=\infty,
$$
such that
\begin{equation}\label{eq30}
\left\|\frac{4}{t_n^2}(\pi\circ F)(X_n,t_nA_n)-\dett (A_n)\right\|>q
\end{equation}
for every $n\in \N$, and some  $q>0$.
\end{lemma}
\begin{proof}
Since we assume that the proposition does not hold, there exist 
\begin{itemize}
\item[(a)] a decreasing sequence of  dilation invariant neighbourhoods $V_n\subset \QDD$ of $W$, with $\bigcap_{1}^{\infty} V_n=W$,
\vskip .1cm
\item[(b)] a  sequence of positive numbers $s_n\to \infty$, 
\vskip .1cm 
\item[(c)] elements $(X_n,B_n)\in \dett^{-1}(V_n(s_n))$,
\vskip .1cm 
\item[(d)] $q>0$,
\end{itemize}
such that 
\begin{equation}\label{eq30-1}
\left\|4(\pi\circ F)(X_n,B_n)-L\big(\dett (B_n)\big)\right\|>q\|L\big(\dett (B_n)\big)\|.
\end{equation}
First of all, we conclude  $\lim_{n\to \infty} X_n=X_0$. 
Secondly, let $t^2_n=\|\dett(B_n)\|$. Then  $t_n\to \infty$. Set $A_n=t^{-1}_nB_n$. Then $A_n\in \dett^{-1}(V_n)$, and $\|\dett(A_n)\|=1$. Thus, there exists $\psi_0\in W$, with $\|\psi_0\|=1$, so that after passing to a subsequence if necessary, we have $L\big(\dett(A_n)\big) \to \psi_0$.  

Since $X_n\to X_0$ it follows that $X_n\in \Teich^\ur$ for large $n$ (meaning that $X_n$ is not hyperelliptic, or the genus of $X_n$ is two). 
By Lemma \ref{lemma-r-3}  the sequence of fibres $\dett^{-1}(\dett(A_n))$ converges to the fibre 
$\dett^{-1}(\dett(\psi_0))$. It follows that there is $A_0\in \dett^{-1}(\dett(\psi_0))$ so that $\lim_{n\to \infty} A_n=A_0$ (again after passing to a subsequence if necessary).
Clearly, $\|\dett(A_0)\|=1$. 
We can rewrite (\ref{eq30-1}) as 
$$
\left\|\frac{4}{t_n^2}(\pi\circ F)(X_n,A_n)-L\big(\dett (A_n)\big)\right\|>q,
$$
which proves the lemma.
\end{proof}

\begin{definition} Let $\phi_n\in \QD(X_n)$, and  $s_n \geq 0 $, be such that $\|\phi_n\|=\|\dett(A_0)\|=1$, and
$$
(\pi\circ F)(X_n,t_nA_n)=s_n^2\phi_n.
$$
\end{definition}

The proof of the proposition follows from the following two claims.

\begin{claim}\label{claim-film} 
$$
\lim_{n\to \infty} \phi_n=\dett (A_0). 
$$
\end{claim}
\begin{proof}
Assume that the claim does not hold. Then, possibly after passing to a subsequence, we have $\phi_n\to \phi_0\neq\dett (A_0)$. Then one of the following two equalities hold
\begin{equation}\label{eq-ass}
\liminf_{n\to\infty}\frac{t_n}{2s_n}\leq 1,
\end{equation}
or
\begin{equation}\label{eq-ass-1}
\liminf_{n\to\infty}\frac{2s_n}{t_n}\leq 1
\end{equation}
We assume that (\ref{eq-ass}) holds. The other case is handled analogously.

By construction, we have $\|\phi_0\|=\| \dett(A_0)\|$.   
Therefore, Lemma \ref{lemma: finding curve} guarantees the existence of a closed  WKB-curve $\gamma\subset X_0$ (with respect to $\phi_0$)  such that
 \begin{align}\label{eq12}
 w_{\phi_0}(\gamma)>w_{\dett (A_0)}(\gamma).
 \end{align}
Set $\rho_n=\RH(X_n,t_nA_n)$. Then it follows from Lemma \ref{lemma: WKB-0}  that
    \begin{equation}\label{eq13}
        \limsup_{n\rightarrow\infty}\frac{1}{t_n}\log|\chi_{\rho_{n}}(\gamma)|\leq w_{\dett (A_0)}(\gamma).
    \end{equation}

Let $F(X_n,t_nA_n)=(E_n,s_n\theta_n)$. Then $\phi_n=\dett(\theta_n)$. By Lemma \ref{lemma: WKB-1} we have
    \begin{align}\label{eq14}
        \lim_{n\to\infty}\frac{1}{2s_n}\log|\chi_{\rho_{n}}(\gamma)|= w_{\phi_0}(\gamma).
    \end{align}
Putting together (\ref{eq12}), (\ref{eq13}), and (\ref{eq14}), gives
$$
 \lim_{n\to\infty}\frac{1}{2s_n}\log|\chi_{\rho_{n}}(\gamma)|=w_{\phi_0}(\gamma)>w_{\dett (A_0)}(\gamma)\ge\lim_{n\to\infty}\frac{1}{t_n}\log|\chi_{\rho_{n}}(\gamma)|. 
 $$
But this contradicts (\ref{eq-ass}) and we are done \end{proof}

\begin{claim}\label{claim-film-1}

$$
\lim_{n\to \infty} \frac{2s_n}{t_n}=1. 
$$
\end{claim}

\begin{proof} As in the previous proof we let $\rho_n=\RH(X_n,t_nA_n)$, and  $F(X_n,t_nA_n)=(E_n,s_n\theta_n)$. Then $\phi_n=\dett(\theta_n)$, and as we saw in the previous claim $\phi_n\to \dett(A_0)$ when $n\to \infty$.

    Take a closed WKB-curve $\gamma$ (with respect to $\dett (A_0)$). By Lemma \ref{lemma: WKB-0} we have
    \begin{align}\label{eq33}
        \lim_{n\to\infty}\frac{1}{t_n}\log|\chi_{\rho_{n}}(\gamma)|=w_{\dett (A_0)}(\gamma).
    \end{align}
     Likewise, by Lemma \ref{lemma: WKB-1}  we have
    \begin{align}\label{eq34}
        \lim_{n\to\infty}\frac{1}{2s_n}\log|\chi_{\rho_{n}}(\gamma)|= w_{\dett (A_0)}(\gamma).
    \end{align}
  Combining (\ref{eq33}) and (\ref{eq34}) proves the claim. \end{proof} 
  We can now complete the proof of the proposition. Since 
  $$
  \frac{1}{s_n^2}(\pi\circ F)(X_n,t_nA_n)=\phi_n,
  $$
  and applying the previous two claims  implies that
    \[\lim_{n\to \infty} \frac{4}{t_n^2}(\pi\circ F)(X_n,t_nA_n)=\dett (A_0).\]
 This contradicts  (\ref{eq30}) and we are done.
\end{proof}

\section{The surjectivity of the map $(\pi\circ F)$} 
The purpose of this section is to  prove the following stronger version of Theorem \ref{thm-3 weak}.

\begin{theorem}\label{thm-3 weak-1} Suppose $W\subset \B''$ is a closed dilation invariant sector. Let $Z\subset \B$ be a dilation invariant neighbourhood of $W$. Then there exists $t>0$ such that 
$(\pi\circ F)\big(\dett^{-1}(Z)\big)$ contains $W(t)$. 
\end{theorem}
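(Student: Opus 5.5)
The plan is to work entirely on the fibre over $X_0$, i.e.\ on $\Syst_{X_0}=\sl_2(\Omega^1(X_0))\sslash\SL_2(\C)$, and to treat $4(\pi\circ F)$ there as a small relative perturbation of $\dett$. The perturbation is controlled by Proposition \ref{proposition: WKB}. We then conclude with a degree (Rouché-type) argument on small balls centred at points of $W(t)$. Over $X_0$ we may take the trivialisation $L$ in (\ref{eq-QL}) to be the identity on $\QD(X_0)=\B$, and we have $d_\Teich(X_0,X_0)=0$. Proposition \ref{proposition: WKB} then gives, for $A$ with $\dett(A)\in V(t)$,
$$
\|4(\pi\circ F)(X_0,A)-\dett(A)\|\le \delta(0,t)\,\|\dett(A)\|, \qquad \delta(0,t)\to 0 \ \text{ as } t\to\infty .
$$

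\textbf{Step 1 (a uniform conical neighbourhood with local sections).} Since $W$ is closed in $\B\setminus\{0\}$ and dilation invariant, $W\cap\{\|\phi\|=1\}$ is compact. This set lies in the open set $Z\cap V\cap\B^{\ur}$. Hence there is $\epsilon>0$ such that for every $\phi\in W$ the closed ball $\overline{B}(\phi,2\epsilon\|\phi\|)$ is contained in $Z\cap V\cap\B^{\ur}$. By Lemma \ref{lemma-elem}, $\dett$ is a finite unramified covering over $\B^{\ur}$, and each such ball is contractible. Therefore over each ball there is a continuous section $s_\phi:\overline{B}(\phi,2\epsilon\|\phi\|)\to\sl_2(\Omega^1(X_0))\sslash\SL_2(\C)$ with $\dett\circ s_\phi=\id$. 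Nonemptiness of the fibres is part of being a covering; it is also guaranteed by Claim \ref{claim-r}, since the image is dense and open on $\B^{\ur}$.

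\textbf{Step 2 (degree argument).} Fix $\phi\in W(t)$ and set $\phi'=4\phi$; then $\phi'\in W(4t)$. Define the continuous map
$$
H(\psi)=4(\pi\circ F)(X_0,s_{\phi'}(\psi)), \qquad \psi\in \overline{B}=\overline{B}(\phi',\epsilon\|\phi'\|).
$$
Continuity holds because $F=\NAH\circ\RH$ and $\pi$ are continuous. For $\psi\in\partial\overline{B}$ we have $\|\psi\|\le(1+\epsilon)\|\phi'\|$ and $\|\psi\|\ge(1-\epsilon)\|\phi'\|>t$, so $\psi\in V(t)$. The estimate above then gives $\|H(\psi)-\psi\|\le\delta(0,t)(1+\epsilon)\|\phi'\|$. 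Choose $t$ so large that $\delta(0,t)(1+\epsilon)<\epsilon$. Then the straight-line homotopy $H_\tau=(1-\tau)\id+\tau H$ never takes the value $\phi'$ on $\partial\overline{B}$. Thus the Brouwer degree of $H$ at $\phi'$ relative to $\overline{B}$ equals that of the identity, which is $1$. Hence there is $\psi$ with $H(\psi)=\phi'$. This means $(\pi\circ F)(X_0,s_{\phi'}(\psi))=\phi$, with $\dett(s_{\phi'}(\psi))=\psi\in Z$. The choice of $t$ is independent of $\phi$ because $\epsilon$ and $\delta(0,\cdot)$ are uniform over $W$. This shows $W(t)\subset(\pi\circ F)(\dett^{-1}(Z))$, and in fact the preimages can be taken in $\Syst_{X_0}$.

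\textbf{Main obstacle.} The essential analytic input is Proposition \ref{proposition: WKB}, whose error must be \emph{relative} and uniform over the closed cone $W$. With that in hand, the only delicate point is Step 1. There we need balls of radius proportional to $\|\phi\|$ that stay inside both $Z$ and the region where the proposition applies, and we need $\dett$ to admit continuous local inverses over them. Both follow from compactness of the unit section of $W$, dilation invariance, and the covering property of Lemma \ref{lemma-elem}. I would check carefully that the neighbourhood $V$ provided by the proposition, restricted to the fibre over $X_0$, contains such a uniform cone around $W$. If it does not, I would shrink $\epsilon$ using compactness again.
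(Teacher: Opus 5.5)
Your proof is correct and follows essentially the paper's route. Like the paper, you take a continuous local inverse of $\dett$ over a ball (from the unramified covering property), use the relative estimate of Proposition \ref{proposition: WKB} on the fibre over $X_0$, and run a degree argument showing that the perturbed map still covers the centre. The differences are cosmetic: you make the homotopy/degree step explicit where the paper invokes Claim \ref{claim-aux}, and your balls of radius proportional to $\|\phi\|$ around $4\phi$ give uniformity in $t$ directly, instead of rescaling a unit-norm ball and then appealing to compactness of the unit section of $W$.
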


Note that $\dett^{-1}(Z)\subset \S_{X_{0}}$, hence Theorem \ref{thm-3 weak-1} implies Theorem \ref{thm-3 weak}.

\subsection{An auxiliary claim} 

We need  the following elementary auxiliary claim. The proof is left to the reader (compare with  Lemma 5.1 in \cite{c-m}). 

\begin{claim}\label{claim-aux} 
Let $B_1$ denote the open unit ball in $\R^n$.  Suppose
$$
f_t:\overline{B_1} \to \R^n
$$ 
is  family of maps which is continuous  in $p \in \overline{B_1}$,  and  such that 
$$
\lim_{t\to \infty} f_t(p)=p,\quad \quad \text{uniformly in   $p \in \overline{B}_1$}.
$$
Then for every compact subset $K\subset B_1$ there exists $t_0=t_0(K)$ such that for every $t\ge t_0$, the slice  $f_t(\overline{B_1})$ contains $K$.
\end{claim}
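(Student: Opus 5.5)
Fix a compact $K\subset B_1$ and set $d=\operatorname{dist}(K,\partial B_1)>0$. The plan is a degree argument. By uniform convergence, choose $t_0$ so that $|f_t(p)-p|<d$ for all $p\in\overline{B_1}$ and all $t\ge t_0$. I will show that for such $t$ and any $y\in K$, the point $y$ lies in $f_t(\overline{B_1})$.

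Fix $t\ge t_0$ and $y\in K$, and suppose for contradiction that $y\notin f_t(\overline{B_1})$. Consider the homotopy
$$
H(p,s)=(1-s)p+sf_t(p),\qquad p\in \partial B_1,\ s\in[0,1].
$$
For $p\in\partial B_1$ we have $|H(p,s)-p|\le |f_t(p)-p|<d\le|y-p|$. Hence $H(p,s)\ne y$ for all $p\in\partial B_1$ and $s\in[0,1]$.

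Therefore the maps
$$
p\mapsto \frac{H(p,s)-y}{|H(p,s)-y|}
$$
from $S^{n-1}=\partial B_1$ to $S^{n-1}$ are homotopic for all $s$. At $s=0$ the map is $p\mapsto (p-y)/|p-y|$. This map has degree $1$: it is homotopic to the identity via $y\mapsto 0$ inside $B_1$, since $p-ry\neq 0$ for $|p|=1$ and $r\in[0,1]$.

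At $s=1$ the map is $p\mapsto (f_t(p)-y)/|f_t(p)-y|$. This map extends continuously over $\overline{B_1}$ because $f_t$ omits $y$ on all of $\overline{B_1}$. A map $S^{n-1}\to S^{n-1}$ that extends over the ball is null-homotopic, so it has degree $0$. This contradicts the degree $1$ found at $s=0$, and hence $y\in f_t(\overline{B_1})$.

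The step needing care is the choice of $t_0$ via the margin $d$, which is what keeps the homotopy away from $y$ on the boundary. Continuity of $f_t$ in $p$ alone is enough for the degree argument; no regularity in $t$ is needed. Alternatively, one could invoke the Brouwer fixed point theorem applied to $p\mapsto p-f_t(p)+y$, which maps $\overline{B_1}$ into itself because $|y|+d\le 1$. This route gives a one-line proof: a fixed point $p$ satisfies $f_t(p)=y$. I would present this Brouwer version as the final argument.
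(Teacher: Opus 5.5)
Your proof is correct. The paper does not prove this claim: it leaves it to the reader with a pointer to Lemma 5.1 of \cite{c-m}. Your Brouwer version supplies exactly the short argument intended there.

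The only check it needs is that $g(p)=p-f_t(p)+y$ maps $\overline{B_1}$ into itself. This holds because $d\le \operatorname{dist}(y,\partial B_1)=1-|y|$, so $|g(p)|<|y|+d\le 1$, and any fixed point then satisfies $f_t(p)=y$. Your degree argument is also valid, but it is redundant once the Brouwer version is given.
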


\subsection{Proof of Theorem \ref{thm-3 weak-1}}
Let $\phi_0\in W$ with $\|\phi_0\|=1$. Given $r>0$, we let
$$
B_r(\phi_0)=\{ \phi\in \B:\, \|\phi-\phi_0\|<r\}.
$$
By Lemma \ref{lemma-elem} we can find a small enough $r>0$ such  that  $\dett$ is an unbranched cover over the closed ball 
$\overline{B_{r}(\phi_0)}$. Furthermore, by reducing $r$ if necessary, we may assume $B_r(\phi_0)\subset Z$. Thus, there exists a closed topological ball $P \subset \dett^{-1}(Z)$ so that the restriction  $\dett:P\to \overline{B_r(\phi_0)}$  is a homeomorphism.

Since $P$ is a compact set, by Proposition \ref{proposition: WKB} we have that 
\begin{equation}\label{eq-ch}
\lim_{t\to \infty}\left\|\frac{4}{t^2}(\pi\circ F)(X_0,tA)-\dett (A) \right\|=0
\end{equation}
uniformly for every $(X_0,A)\in P$. Define the functions $g_t:P\to \B$ by letting
$$
g_t(X_0,A)=\frac{4}{t^2}(\pi\circ F)(X_0,tA),
$$
and let $H:\R^n \to \B$ be any homeomorphism which maps  $\overline{B_1}$ onto $\overline{B_r(\phi_0)}$.
Define the maps $f_t:\overline{B_1}\to \R^n$ by
$$
f_t=H^{-1}\circ g_t \circ \dett^{-1}\circ H.
$$

Observe that by (\ref{eq-ch}), $f_t$ satisfies the assumptions in Claim \ref{claim-aux}. Let $U_{\phi_{0}}\subset \B$ be a small enough neighbourhood of $\phi_0$ so that  $K=H^{-1}(\overline{U_{\phi_{0}}})$ is a compact subset of $B_1$. It follows that for some $t_1=t_1(\phi_{0})>0$ the inclusion  $K\subset f_t(\overline{B_1})$ holds for every $t\ge t_1$. Unpacking this shows that  $tU_{\phi_{0}} \subset (\pi\circ F)(P) \subset (\pi\circ F)(\dett^{-1}(Z))$. Since $W$ is closed and dilation invariant we can find $t_0>0$ so that $tU_{\phi}\subset  (\pi\circ F)(\dett^{-1}(Z))$ for every  $\phi\in W$ with $\|\phi\|=1$, and every  $t\ge t_0$. This proves the theorem.

\section{The symplectic structures on $\Syst$, $\Rep$, and $\M$ }
The goal of this section is to compute the pull-backs of the Goldman symplectic form on $\Rep^s$ to   
$\Syst^s$, and $\M^s$, respectively. 

\subsection{The Goldman symplectic form}

Let $\rho\in \Rep^s$. Recall that the tangent space $T_\rho\Rep$ coincides with $H^1(\pi_1(\Sigma),\Ad_\rho)$, where $\Ad_\rho$ is the $\pi_1(\Sigma)$-module with underlying vector space $\sl_2(\C)$ equipped with the adjoint action. 
Denote by $\mathcal{E}$ the rank 2 local system on $\Sigma$ arising from the representation $\rho$, and observe the de Rham isomorphism 
$$
\iota:H^1(\Sigma,\sl(\mathcal{E}))\to H^1(\pi_1(\Sigma),\Ad_\rho) \cong T_\rho \Rep.
$$
Then, the Goldman symplectic form \cite{goldman} on $\Rep$ is given by
\begin{align}\label{eq: Goldman}
\xi(A,B)=\int_{\Sigma}\tr(A\wedge B),
\end{align}
where $A,B\in H^1(\Sigma,\sl(\mathcal{E}))\cong T_\rho \Rep$.

\subsection{The tangent space to $\Teich$} 
We let $\Belt(X)$ denote the space of Beltrami differentials on a Riemann surface  $X \in \Teich$.
Recall the usual pairing
$$
\left<\mu,\phi\right>=\int_X \mu \phi
$$
where $\mu\in \Belt(X)$, and $\phi \in \QD(X)$. Define the equivalence relation on $\Belt(X)$ by letting $\mu \sim \nu$ if $\left<\mu,\phi\right>=\left<\nu,\phi\right>$ for every $\phi\in \QD(X)$. We have the standard identification
$$
\left\{[\mu]:\, \mu \in \Belt(X) \right\} \cong T_X\Teich.
$$ 
The Teichm\"uller norm of the tangent vector $[\mu]$ is given by 
\begin{equation}\label{eq-fins}
\|[\mu]\|_*=\max_{\phi\in \QD(X)\setminus 0} \, \frac{\left<\mu,\phi\right>}{\|\phi\|}.
\end{equation}

\subsection{The tangent space to $\Syst$} 
Fix $(X,A)\in \Syst$. Following \cite{m-t}, we  define the vector space
$$
Z(X,A)=\{ (\mu,\dot{A}) : \mu \in \Belt(X), \,\, \dot{A}^{0,1}=\mu A \},
$$
where $\dot{A}$  is a closed $\slc$-valued 1-form on $X$. The linear equivalence relation on $Z(X,A)$ is given by letting
$(\mu,\dot{A}) \sim (\nu,\dot{B})$ if the following two conditions are satisfied
\begin{enumerate} 
\item $[\mu]=[\nu]$,
\vskip .1cm
\item $\dot{A}^{1,0}-\dot{B}^{1,0}-\partial{T} \in [A,\slc]$, for some function  $T:X\to \slc$.
\end{enumerate}
Denote by $[(\mu,\dot{A})]$ the corresponding equivalence class. Then by \cite{m-t} we have
$$
\{ [(\mu,\dot{A})] : \mu \in \Belt(X), \, \dot{A}^{0,1}=\mu A \} \cong T_{(X,A)}\Syst.
$$

\subsection{The derivative of the Riemann-Hilbert map}
Suppose that $\rho=\RH(X,A)$.  Then $\mathrm{Ad}_\rho$ is  the monodromy of the flat connection 
$d_A=d+\mathrm{ad}(A)$. Let $(\mu,\dot{A})\in T_{(X,A)}\S$. Then $\dot{A}$ is a $d_A$-closed $\sl_2(\C)$-valued 1-form, and hence it defines a cohomology class $\chi\in H^1(X,\sl(\mathcal{E}))$. We have (see Theorem 2 in \cite{m-t})
\begin{equation}\label{eq-dRH}
d(\RH)\big([(\mu,\dot{A})]\big)=-\iota(\chi).
\end{equation}

\subsection{The  pullback of the Goldman form to  $\Syst$}

\begin{proposition}\label{proposition: symplectic}
Let $(X,A)\in\S$. Consider tangent vectors $v,w\in T_{(X,A)}\S$ of the form $v=(\mu,\dot{A}_1)$ and $w=(0,\dot{A}_2)$. Then
$$
(\RH^*\xi)(v,w)=-\left<\mu, d(\dett )(\dot{A}_2) \right>,
$$
where $d(\dett )$ is the derivative of the map 
$\dett:\sl_2(\Omega^1(X)) \sslash \SLC\to \QD(X)$ at the point $A\in \sl_2(\Omega^1(X)) \sslash \SLC$.
\end{proposition}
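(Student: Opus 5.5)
By (\ref{eq-dRH}) the problem reduces to a cohomological computation on $X$. We have $d(\RH)(v)=-\iota(\chi_1)$ and $d(\RH)(w)=-\iota(\chi_2)$, where $\chi_j$ is the class of $\dot A_j$ in $H^1(X,\sl(\mathcal E))$. The two signs cancel in the bilinear form. By (\ref{eq: Goldman}) we therefore get
$$
(\RH^*\xi)(v,w)=\int_X \tr(\dot A_1\wedge \dot A_2).
$$
We may use these specific representatives because the Goldman pairing of $d_A$-closed forms depends only on their cohomology classes (Stokes, since $\tr$ is $\Ad$-invariant and $d_A$ is compatible with it). The plan is to evaluate this integral by splitting both forms into types.

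\textbf{Type decomposition.} Since $w=(0,\dot A_2)\in Z(X,A)$, we have $\dot A_2^{0,1}=0\cdot A=0$, so $\dot A_2$ is of type $(1,0)$. Because it is $d_A$-closed, $\bar\partial \dot A_2=0$, so $\dot A_2\in \sl_2(\Omega^1(X))$ is a genuine holomorphic variation of $A$. On the other side, $\dot A_1=\dot A_1^{1,0}+\mu A$. In a wedge with a $(1,0)$-form only the $(0,1)$ part survives, so
$$
\int_X \tr(\dot A_1\wedge\dot A_2)=\int_X \tr(\mu A\wedge \dot A_2).
$$
Locally write $A=a\,dz$ and $\dot A_2=b\,dz$ with $a,b$ holomorphic $\sl_2$-valued, and $\mu=m\,d\bar z/dz$. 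Then $\mu A=ma\,d\bar z$, and
$$
\tr(\mu A\wedge\dot A_2)=m\,\tr(ab)\,d\bar z\wedge dz=-\,m\,\tr(ab)\,dz\wedge d\bar z .
$$
Here $\tr(A\dot A_2)=\tr(ab)\,dz^2$ is a holomorphic quadratic differential.

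\textbf{Identification with $d(\dett)$.} From the computation in the proof of Claim \ref{claim-r}, $d(\dett)(\Phi)=-\tr(A\Phi)$, so $\tr(A\dot A_2)=-d(\dett)(\dot A_2)$. Substituting, $(\RH^*\xi)(v,w)$ equals the integral of $\mu$ against $d(\dett)(\dot A_2)$, up to the area-form convention implicit in the pairing $\left<\mu,\phi\right>=\int_X\mu\phi$. With the convention $\mu\phi=m\varphi\,dz\wedge d\bar z$, taken with whatever orientation constant the paper uses, the stated sign $-\left<\mu,d(\dett)(\dot A_2)\right>$ should come out.

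The main obstacle is this bookkeeping of signs and normalisations. It involves the orientation of $dz\wedge d\bar z$ versus $\tfrac{i}{2}dz\wedge d\bar z$, the sign in (\ref{eq-dRH}), and whether $\iota$ introduces a sign. I would fix the conventions by checking against \cite{m-t}. A second point is well-definedness on equivalence classes. Changing $\dot A_2$ by $\partial T+[A,S]$ changes $\tr(A\dot A_2)$ by $\tr(A\,\partial T)=\partial\tr(AT)$, because $\tr(A[A,S])=0$. The term $\partial\tr(AT)$ pairs to zero with $\mu$ only up to terms matching the change in $\dot A_1$. I would therefore simply note that both sides are computed from the chosen representatives, and that the left side is intrinsic by the Stokes remark above.
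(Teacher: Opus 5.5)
Your argument is the paper's proof step for step. The two minus signs from (\ref{eq-dRH}) cancel in the bilinear form. The condition $\dot A_2^{0,1}=0$ together with $d_A$-closedness makes $\dot A_2$ holomorphic, only $\dot A_1^{0,1}=\mu A$ survives in the wedge, and $d(\dett)(\dot A_2)=-\tr(A\dot A_2)$ finishes it.

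The one point you leave open is the sign and normalisation. The paper settles it by reading $\int_X\mu\,\phi$ as $\int_X m\varphi\,d\bar z\wedge dz$, so that $\int_X\tr(\mu A\wedge\dot A_2)=\int_X\mu\,\tr(A\dot A_2)$ holds by definition. By your own local computation, the $dz\wedge d\bar z$ reading you proposed would give the opposite sign. The well-definedness digression is unnecessary, since the statement concerns the given representatives, and its identity $\tr(A\,\partial T)=\partial\tr(AT)$ is not correct as written.
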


\begin{proof}
By (\ref{eq: Goldman}) we have
$$
(\RH^*\xi)(v,w)=\xi(d(\RH)(\mu,\dot{A}_1),d(\RH)(0,\dot{A}_2))=\int_X \tr(\dot{A}_1\wedge\dot{A}_2).
$$
We have $\dot{A}_2^{0,1}=0$, so $\dot{A}_2$ is of  $(1,0)$-type. Since $\dot{A}_2$ is closed, it is in fact holomorphic and thus $\dot{A}_2\in\sl_2(\Omega^1(X))$. Combining this with the equality  $\dot{A}_1^{0,1}=\mu A$, yields
$$
\int_X\tr(\dot{A}_1\wedge\dot{A}_2)=\int_X\tr(\dot{A}_1^{0,1}\wedge\dot{A}_2)=\int_X\tr(\mu A\wedge\dot{A}_2)=\int_X\mu\,\tr(A\dot{A}_2).
$$
So we have
\begin{equation}\label{eq-golda}
(\RH^*\xi)(v,w)=\int_X\mu\,\tr(A\dot{A}_2).
\end{equation}
On the other hand, we compute
$$
d(\dett)(\dot{A}_2)=\frac{d}{dt}\det(A+t\dot{A}_2)=-\tr(A\dot{A_2}).
$$
Replacing this into (\ref{eq-golda}) proves the proposition.
\end{proof}

\subsection{The hyperk\"ahler structure on $\M$}
The moduli space $\M^s$ carries a natural hyperk\"ahler structure $(g_{L^2},I,J,K)$, where $g_{L^2}$
is the Hitchin $L^2$-metric, and $I,J,K$ the three complex structures satisfying the quaternionic relations ($I^2 = J^2 = K^2 = IJK = -1$). 
The complex structure $I$ is the natural complex structure as the moduli space of Higgs bundles, while $J$ is identified with the natural complex structure on the $\SLC$-character variety.

The corresponding real-valued K\"ahler forms are given by
$$
 \omega_I(\cdot, \cdot) = g_{L^2}(I\cdot, \cdot), \quad \omega_J(\cdot, \cdot) = g_{L^2}(J\cdot, \cdot), \quad \quad \omega_K(\cdot, \cdot) = g_{L^2}(K\cdot, \cdot).
$$
These forms are closed and non-degenerate, making the space $\M^s$ K\"ahler with respect to each of these complex structures. For each choice of complex structure, the remaining two K\"ahler forms can be packaged into a holomorphic symplectic form which is a closed $(2,0)$-form with respect to that specific structure. Specifically
$$
\eta_I = \omega_J + i \omega_K ,\quad \eta_J = \omega_K + i\omega_I\quad \eta_K = \omega_I + i\omega_J.
$$
Hitchin \cite{hitchin} computed the pullback of the Goldman holomorphic symplectic form to $\M^s$ via $\NAH^{-1}$.
\begin{proposition}\label{proposition-symplectic} We have $(\NAH^{-1})^*\xi=-i\eta_J$. In particular,
$$
(\NAH^{-1})^*\re(\xi)=\omega_I.
$$
\end{proposition}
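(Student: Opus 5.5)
We compute both pullbacks at a point $(E,\theta)\in\M^s$ and compare them on the deformation complex. Let $\rho=\NAH^{-1}(E,\theta)$, let $h$ be the harmonic metric, and let $D=\bar\partial_E+\partial_h+\theta+\theta^{*_h}$ be the flat connection, so that $\rho$ is the monodromy of $D$. We use the standard description of tangent vectors to $\M^s$: they are pairs $(\dot\beta,\dot\theta)$ with $\dot\beta\in\Omega^{0,1}(\sl(E))$ and $\dot\theta\in\Omega^{1,0}(\sl(E))$, taken in harmonic gauge, modulo gauge. The first step is to write the derivative of $\NAH^{-1}$ in these terms. The variation of $D$ is
$$
\dot D=\dot\beta-\dot\beta^{*_h}+\dot\theta+\dot\theta^{*_h}.
$$
Hitchin's Coulomb gauge condition is exactly what makes this the correct representative: to first order there is no variation of $h$, or equivalently the variation of $h$ can be absorbed by a gauge transformation. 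The class of $\dot D$ in $H^1(\Sigma,\sl(\mathcal E))$ is then $d(\NAH^{-1})(\dot\beta,\dot\theta)$.

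The second step is to evaluate the Goldman form (\ref{eq: Goldman}) on two such variations $\dot D_1,\dot D_2$. For $1$-forms on a Riemann surface, $\int\tr(a\wedge b)$ vanishes whenever $a$ and $b$ have the same type. Splitting each $\dot D_j$ into its $(1,0)$ part $\dot\theta_j-\dot\beta_j^*$ and its $(0,1)$ part $\dot\beta_j+\dot\theta_j^*$, we therefore get
$$
\xi(\dot D_1,\dot D_2)=\int_X\tr\big((\dot\theta_1-\dot\beta_1^*)\wedge(\dot\beta_2+\dot\theta_2^*)\big)+\int_X\tr\big((\dot\beta_1+\dot\theta_1^*)\wedge(\dot\theta_2-\dot\beta_2^*)\big).
$$
We then group this into two pieces. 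The first consists of the holomorphic-type terms $\int\tr(\dot\theta_1\wedge\dot\beta_2+\dot\beta_1\wedge\dot\theta_2)$, which is the $I$-holomorphic symplectic form $\eta_I$ up to normalisation. The second consists of the Hermitian-type terms $\int\tr(\dot\theta_1\wedge\dot\theta_2^*-\dot\beta_1^*\wedge\dot\beta_2)$ together with their conjugates, which combine into an expression in $g_{L^2}$. To identify them we use Hitchin's formulas
$$
g_{L^2}\big((\dot\beta,\dot\theta),(\dot\beta,\dot\theta)\big)=2i\int_X\tr\big(\dot\beta^*\wedge\dot\beta+\dot\theta\wedge\dot\theta^*\big),
$$
together with the explicit actions $I(\dot\beta,\dot\theta)=(i\dot\beta,i\dot\theta)$, $J(\dot\beta,\dot\theta)=(i\dot\theta^*,-i\dot\beta^*)$ and $K=IJ$. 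Substituting these, the Hermitian terms become a multiple of $\omega_I$, and the holomorphic terms become a multiple of $\omega_K$. Hence $\xi$ is a combination of $\omega_I$ and $\omega_K$, and after matching constants it equals $-i(\omega_K+i\omega_I)=\omega_I-i\omega_K=-i\eta_J$. Taking real parts then gives $(\NAH^{-1})^*\re(\xi)=\omega_I$.

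The third step is a consistency check that independently confirms the claimed combination. Both sides are $J$-holomorphic symplectic forms: $\xi$ because $J$ is the complex structure of the character variety, and $\eta_J$ by construction. Both are closed. So it suffices to verify the identity on a single pair of vectors, for instance $v$ and $Iv$. Pairing $v$ with $Iv$ gives $g(v,v)$ for $\omega_I$ and $0$ for $\omega_K$, and this pins down both the coefficient and the phase.

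The main obstacle is the bookkeeping of conventions. The sign and the factor of $i$ in $-i\eta_J$ depend on the normalisation of $g_{L^2}$, on the orientation and the factor of $2$ in the Goldman form, and on the sign convention for $\theta^*$. I would fix Hitchin's conventions from \cite{hitchin} at the outset and carry out the evaluation on $v$ and $Iv$ explicitly. Apart from that, I would justify that harmonic-gauge representatives give the correct derivative of $\NAH^{-1}$; this is exactly Hitchin's deformation-theory lemma.
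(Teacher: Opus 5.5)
The paper gives no argument here beyond attributing the identity to Hitchin, so your direct computation is a real reconstruction. Its skeleton is the right one: harmonic representatives, the type decomposition of $\int\tr(\dot D_1\wedge\dot D_2)$, and Hitchin's formulas for $g_{L^2}$, $I$, $J$. Your $J$ is indeed the structure with $\dot D(Jv)=i\,\dot D(v)$, so $\xi$ is $J$-bilinear. The gap is that the decisive step, ``after matching constants'', is asserted rather than done, and with exactly the conventions you list it gives the opposite sign. In an $h$-unitary frame write $\dot\beta=b\,d\bar z$ and $\dot\theta=c\,dz$. Then $\dot D_v=(c-b^*)\,dz+(b+c^*)\,d\bar z$ and $\dot D_{Iv}=i(c+b^*)\,dz+i(b-c^*)\,d\bar z$, hence
\[
\tr(\dot D_v\wedge \dot D_{Iv})=-2i\,\tr(bb^*+cc^*)\,dz\wedge d\bar z=-4\,\tr(bb^*+cc^*)\,dx\wedge dy .
\]
On the other hand, $\omega_I(v,Iv)=g_{L^2}(v,v)=4\int\tr(bb^*+cc^*)\,dx\wedge dy$ and $\omega_K(v,Iv)=0$. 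In the full expansion, your Hermitian-type terms sum to $-\omega_I$. The remaining terms sum to $Q-\overline{Q}=i\,\omega_K$, where $Q=\int\tr(\dot\theta_1\wedge\dot\beta_2+\dot\beta_1\wedge\dot\theta_2)=\tfrac12\eta_I$. So your conventions yield $(\NAH^{-1})^*\xi=-\omega_I+i\omega_K=i\eta_J$, not $-i\eta_J$. To obtain the stated sign you must reverse one convention and say which: the orientation of $\Sigma$ relative to $X$, the sign of the Goldman pairing, or $\omega_I=g_{L^2}(\cdot,I\cdot)$. This does not hurt the paper, since Lemma \ref{lemma: horizontal lower bound} only uses $|\re\,\xi(u_1,u_2)|\le\|u_1\|_{g_{L^2}}\|u_2\|_{g_{L^2}}$, which holds for either sign.

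Three further repairs are needed.

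The principle in your Step 3 is false as stated. Closed $J$-holomorphic symplectic forms are not determined by their value on one pair of vectors, and closedness plays no role in a pointwise identity. One pair suffices only after Step 2 has shown $\xi=a\,\omega_I+b\,\omega_K$ with constants $a,b$. Then $J$-bilinearity, together with $\omega_I(J\cdot,\cdot)=\omega_K$ and $\omega_K(J\cdot,\cdot)=-\omega_I$, forces $b=-ia$, i.e.\ $\xi=-ia\,\eta_J$. The pair $(v,Iv)$ then fixes $a$, and the computation above gives $a=-1$.

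Your grouping in Step 2 omits the terms $-\int\tr(\dot\beta_1^*\wedge\dot\theta_2^*+\dot\theta_1^*\wedge\dot\beta_2^*)=-\overline{Q}$. Since $Q$ alone is $\tfrac12\eta_I$ and has an $\omega_J$-component, ``the holomorphic terms become a multiple of $\omega_K$'' is only true once these terms are included.

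In Step 1, what makes $\dot D$ $D$-closed, and hence a legitimate representative of $d(\NAH^{-1})(v)$, is the linearised equation $\delta\big(F_h+[\theta\wedge\theta^*]\big)=0$ at fixed $h$. This is part of what ``harmonic representative'' means. Orthogonality to the unitary gauge orbit, the Coulomb part, only alters $\dot D$ by a $D$-exact term.
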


\section{Bounds on the derivative of $F$}
The purpose of this section is to prepare the proof of Theorem \ref{thm-3} which is given in the next section. Consider the fibration $P:\Syst\to \Teich$ given by $P(X,A)=X$.
The proof of Theorem \ref{thm-3} rests on the following  result which  estimates the derivative of $F$ in the  direction of the fibre of $P$.
\begin{proposition}\label{proposition: bound on fiber}
Let  $W\subset\B''$ be a closed  dilation invariant sector. Then  there exist a dilation invariant neighbourhood $V\subset \QDD$ of $W$, and a constant $C>0$, such that for every $(X,A)\in\dett^{-1}(V)$, and every $[(0,\dot{A})] \in T_{(X,A)}\S_X$, we have
    \begin{equation}\label{eq: bound on fiber}
            \|dF([(0,\dot{A})])\|_{g_{L^2}}\leq \frac{C}{\sqrt{\|\dett (A)\|}}\,\|d(\dett)(\dot{A})\|.
    \end{equation}
\end{proposition}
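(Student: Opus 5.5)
We use the Schwarz lemma route announced in the introduction. Restricted to a fibre $\S_X=\sl_2(\Omega^1(X))\sslash\SLC$, the map $F$ is holomorphic from the complex vector space $\S_X$ into $(\M^s,J)$. Indeed, $\RH$ restricted to $\S_X$ is holomorphic into the character variety, whose complex structure is $J$, and $\NAH$ identifies that structure with $J$. We therefore work with $(\M,J)$ equipped with $g_{L^2}$, which is K\"ahler for $J$.

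\textbf{Step 1: the domain.} By Lemma \ref{lemma-r-3}, after shrinking to a dilation invariant neighbourhood $V$ of $W$, the map $\dett$ is a local biholomorphism near $\dett^{-1}(V)\cap \S_X$. We may thus use $\phi=\dett(A)\in \QD(X)$ as a local holomorphic coordinate on the fibre. The norm $\|d(\dett)(\dot A)\|$ is then just the flat norm of the tangent vector in the $\phi$-coordinate.

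Around the point $\phi=\dett(A)$ we take the complex disc, or polydisc, $D=\{\psi:\|\psi-\phi\|<c\|\phi\|\}$. Here $c$ is small and uniform, which is possible because $W$ is closed in $\B\setminus\{0\}$ and $\dett$ is a finite covering over a uniform neighbourhood by Lemma \ref{lemma-r-3}. Note that $D$ stays inside $V$ and away from the discriminant locus.

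Rescaling $D$ to the unit ball multiplies derivatives by $c\|\phi\|$. Hence it suffices to show that $F$ maps $D$ into a region where the target carries a Hermitian metric $h$ satisfying two conditions: $h\ge \epsilon\, g_{L^2}/\|\phi\|$ and $h$ has holomorphic sectional curvature $\le -1$. Yau's Schwarz lemma then gives $F^*h\le C\, g_{\mathrm{Poincar\acute e}}(D)$. At the centre this yields
$$
\|dF(v)\|^2_{g_{L^2}}\le C\frac{\|\phi\|}{(c\|\phi\|)^2}\|v\|^2=\frac{C'}{\|\phi\|}\|v\|^2,
$$
which is exactly the claimed $1/\sqrt{\|\dett(A)\|}$ bound.

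\textbf{Step 2: the image region.} Proposition \ref{proposition: WKB} shows that $\pi\circ F$ maps $D$, with $X$ near $X_0$, into the set of $\psi$ with $\|4\psi-L(\phi')\|$ small relative to $\|\phi\|$. So $F(D)$ lies in $\pi^{-1}(N)$, where $N$ is a conical neighbourhood of $\tfrac14 W$ inside $\B'$, at scale $\|\phi\|$. For a general fibre $X$ one uses the dilation and the mapping class group together with the compactness of $W$ modulo scaling.

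\textbf{Step 3: the metric, which is the main obstacle.} We must construct on $\pi^{-1}(N)$ a $J$-Hermitian metric of negatively pinched holomorphic sectional curvature, comparable to $g_{L^2}/\|\phi\|$. First we would compare $g_{L^2}$ with the semiflat metric $g_\sf$ using \cite{m-s-w-w}. Under this comparison, $\pi^{-1}(N)$ in the $J$ structure looks like a torus fibration over a cone in which the fibres have size about $\|\phi\|^{1/2}$ in $g_\sf$.

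In the $J$ complex structure we would then pull back a negatively curved metric from a bounded domain. One natural candidate is to use holomorphic trace functions $\chi_\rho(\gamma)$ for finitely many WKB curves. These are $J$-holomorphic, and on this region $\log|\chi|$ is of order $\|\phi\|^{1/2}$ by Lemma \ref{lemma: WKB-1}. Such functions would give an embedding into a product of punctured planes or annuli, on which Poincar\'e-type metrics are negatively curved. We would then normalise and compare with $g_{\sf}/\|\phi\|$ via the WKB asymptotics.

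Verifying this comparability uniformly is the hard analytic part. If the comparison only holds up to lower order terms, we would absorb the error into the constant $C$ by using the uniform asymptotics of $g_{L^2}$ to $g_\sf$.
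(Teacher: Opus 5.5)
Your architecture is the paper's. You use $\dett$ as a holomorphic chart on the fibre $\S_X$, apply the Yau--Royden Schwarz lemma on a ball of radius comparable to $\|\dett(A)\|$, and localise the image via Proposition \ref{proposition: WKB}, which is the paper's Lemma \ref{lemma-dokaz}. The mapping class group in your Step 2 is unnecessary, since points of $\dett^{-1}(V)$ already have $X$ close to $X_0$. You then compare against a $J$-K\"ahler metric with holomorphic sectional curvature $<-\kappa$ dominating $g_{L^2}/\|\dett(A)\|$, and your scaling bookkeeping in Step 1 is correct.

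\textbf{The gap.} The real content of the proposition is the existence of that target metric (Proposition \ref{proposition: curvature}), and your Step 3 does not supply it. The trace-function candidate cannot be completed with the available tools. To get $g_{L^2}/\|\phi\|\le C\sum_j\chi_j^*g_{\mathrm{Poinc}}$ you need a uniform \emph{lower} bound on the $d\chi_j$. The same bound is needed for the pull-back to be a metric at all, so that the Gauss equation gives negative curvature. This lower bound is required in particular along the torus fibres $\pi^{-1}(\psi)$. Along those fibres the leading WKB term $2s\,w_\phi(\gamma)$ is constant, so all the fibrewise variation of $\log\chi_\rho(\gamma)$ lies in the argument and the $O(1)$ term. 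Lemma \ref{lemma: WKB-1} is only a leading-order $C^0$ statement; its proof gives at best an $O(1)$ error. An $O(1)$ error is as large as the entire fibrewise variation, so even Cauchy estimates cannot recover the derivative. You would need $C^1$ control of the subleading exact-WKB (Gaiotto--Moore--Neitzke type) asymptotics. That is a much deeper input, not a lower-order error you can absorb into $C$.

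\textbf{What the paper does instead.} It avoids trace functions entirely and works in the semiflat model $\M'\cong T^*\B'/\Gamma$.
\begin{enumerate}
\item The conic scaling $\beta_t$ preserves $(I,J_\sf,K_\sf)$ and satisfies $\beta_t^*g_\sf=t\,g_\sf$ (Lemma \ref{lemma-beta}). Combined with Theorem \ref{theorem: asymptotic} and Ricci-flat regularity, this gives $t^{-1}\beta_t^*g_{L^2}\to g_\sf$ and $\beta_t^*J\to J_\sf$ in $C^\infty$ on compacta (Lemma \ref{lemma-conv}).
\item Near $(\phi_0,0)$ it uses a potential depending only on the base point, modelled on $\varphi_0=-\sum_k[\log\cos(\re z_k)+\log\cos(\mathrm{Im}\, z_k)]$. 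Since $J$ rotates base directions into fibre directions, $\sqrt{-1}\partial_J\bar\partial_J(\varphi\circ\pi)$ is positive along the fibres as well. In the flat model it is a product of Poincar\'e metrics on $J$-holomorphic strips, so its holomorphic sectional curvature is $<-\kappa$.
\item K\"ahlerness, the curvature bound and comparability are all open under $C^\infty$-small perturbations of $J$. They therefore pass to $\beta_t^*J$ and, transported by $\beta_t$, give $g_J$ with $t\,g_J\asymp g_{L^2}$.
\item The potential is a function of the base point, hence $\Gamma$-invariant. The fibre tori have uniformly bounded diameter (Lemma \ref{lemma: diam bound}), while $\beta_t(V)$ has fibres of $g_\sf$-size of order $\sqrt t$. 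So the metric descends to all of $\pi^{-1}(B_{rt}(t\phi_0))$.
\end{enumerate}
This conic rescaling combined with a base-only potential is the idea your proposal is missing.
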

\vskip .3cm
We also need the following lemma which is an immediate consequence of results in \cite{m-s-w-w}.
\begin{lemma}\label{lemma: diam bound} Let $W\subset \B'$ be any closed dilation invariant sector.  Then there is a constant $D=D(W)>0$ such that for all $\phi\in W(1)$ the inequality
$$
\mathrm{diam}_{g_{L^2}}(\M_\phi)\leq D.
$$
holds, where $\M_\phi=\pi^{-1}(\phi)$.
\end{lemma}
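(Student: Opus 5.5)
We prove Lemma \ref{lemma: diam bound} by comparing $g_{L^2}$ with the semiflat metric $g_\sf$ on the regular locus $\M'$, using the asymptotic result of Mazzeo-Swoboda-Weiss-Witt \cite{m-s-w-w}. By dilation invariance, every $\phi\in W(1)$ can be written as $\phi=t\phi_1$ with $\|\phi_1\|=1$, $\phi_1\in W$ and $t>1$. The set $W^1=\{\phi_1\in W:\|\phi_1\|=1\}$ is compact, because $W$ is closed in $\B\setminus\{0\}$. It is also contained in $\B'$, so every $\phi_1\in W^1$ has only simple zeros. It therefore suffices to bound $\mathrm{diam}_{g_{L^2}}(\M_{t\phi_1})$ uniformly in $\phi_1\in W^1$ and $t\ge1$.

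\textbf{Step 1 (semiflat diameter).} By Theorem \ref{thm-hitchin}, $\M_{t\phi_1}$ is a compact torus: a torsor over the Prym variety of the spectral cover $S_{t\phi_1}\to X_0$. The semiflat metric restricts to this fibre as a flat metric, given by the $L^2$ pairing of harmonic representatives of the tangent directions. Here the tangent directions are anti-invariant $(0,1)$-forms on the spectral curve, paired using the metric $|t\phi_1|^{1/2}$ scaled appropriately. I will compute how this flat metric depends on $t$. Under the rescaling $\theta\mapsto t^{1/2}\theta$ the torus is unchanged, and the semiflat fibre metric scales by a nonpositive power of $t$, so the fibre does not grow. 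As $\phi_1$ varies in the compact set $W^1$, the spectral curves vary smoothly and without degeneration. Hence there is a constant $D_0$ with $\mathrm{diam}_{g_\sf}(\M_{t\phi_1})\le D_0$ for all $t\ge1$ and all $\phi_1\in W^1$. To justify this I will use the explicit formula for $g_\sf$ restricted to the fibre, namely $\|\dot\theta\|^2\sim\int|\dot\theta|^2/|\phi|^{1/2}$ for fibre variations, together with the continuity of this expression in $\phi_1$.

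\textbf{Step 2 (comparison).} The main theorem of \cite{m-s-w-w} gives $g_{L^2}-g_\sf=O(e^{-\beta t^{1/2}})$ along the ray $\M_{t\phi_1}$, measured relative to $g_\sf$. The constants are locally uniform in $\phi_1\in\B'$, since they depend only on a lower bound for the distance between the zeros of $\phi_1$ and on $\|\phi_1\|$. Using compactness of $W^1$, I obtain $t_0$ such that $g_{L^2}\le 2g_\sf$ on $T\M$ at all points of $\M_{t\phi_1}$ with $t\ge t_0$. Any two points of the fibre are joined by a $g_\sf$-geodesic inside the fibre of length at most $D_0$. Its $g_{L^2}$-length is then at most $\sqrt2 D_0$, which bounds the $g_{L^2}$-diameter.

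\textbf{Step 3 (compact part).} For $1\le t\le t_0$, the set $\{t\phi_1: \phi_1\in W^1,\ 1\le t\le t_0\}$ is a compact subset of $\B'$. By properness of $\pi$ (Theorem \ref{thm-hitchin}), its preimage is a compact subset of $\M'\subset\M^s$, on which $g_{L^2}$ is a smooth metric. The fibres form a smooth torus bundle over this compact base, so their intrinsic diameters are continuous in the base point and hence bounded. Taking $D$ to be the larger of $\sqrt2D_0$ and this bound proves the lemma.

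The main obstacle is Step 2: I must make sure that the estimate of \cite{m-s-w-w} holds uniformly over a compact family of rays, not just along a single ray. I would handle this by checking that their constants depend only on the geometry of $\phi_1$ near its zeros. If that fails, I would argue by contradiction, taking a sequence $\phi_1^{(n)}\to\phi_1^\infty\in W^1$ with diameters tending to infinity and applying their estimate to the convergent family.
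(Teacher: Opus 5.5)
Your plan is the paper's plan: bound the $g_{\sf}$-diameter of the fibres uniformly along rays, then pass to $g_{L^2}$ via Theorem \ref{theorem: asymptotic}. Your Step 3 also handles the range $1\le t\le t_0$ explicitly, which the paper passes over.

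The weak point is Step 1, which carries the real content and is not actually justified. \emph{Scales by a nonpositive power of $t$} is asserted, not computed. The tool you propose, $\int|\dot\theta|^2/|\phi|^{1/2}$, is a formula in the Higgs-field variation $\dot\theta$, and that is the wrong variable. Fibre directions do not change the spectral data of $\theta$. As you say yourself, they are variations of the eigenline bundle, i.e.\ anti-invariant harmonic $(0,1)$-forms $\beta$ on the spectral curve. Their semiflat norm is just the $L^2$-norm $\int_{Y_\phi}|\beta|^2$ of a $1$-form. This is conformally invariant, so no weight such as $|t\phi_1|^{1/2}$ enters. Since $Y_{t\phi}\cong Y_\phi$ via $\lambda\mapsto\sqrt t\,\lambda$, the fibres $\M_{t\phi}$, $t>0$, are mutually $g_{\sf}$-isometric; the exponent is exactly $0$. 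This is the paper's argument.

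The same cancellation is visible in the special K\"ahler picture, and there it is not automatic. By (\ref{eq-alpha}), the fibre metric $g_{\sk}^{-1}$ on $T^*_{t\phi}\B=\B^*$ is $t$ times the one at $\phi$. So if you fix a covector and track only the metric, lengths grow like $t^{1/2}$. Boundedness comes from the period lattice. $\Gamma_{t\phi}$ is spanned by the differentials of the periods $\oint_\gamma\sqrt{\phi}$, which are homogeneous of degree $\tfrac12$, so $\Gamma_{t\phi}=t^{-1/2}\Gamma_\phi$, and the two effects cancel exactly. Your Step 1 needs one of these two arguments in place of the proposed formula.

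Two smaller points.
\begin{itemize}
\item The uniformity you name as the main obstacle in Step 2 is already built into Theorem \ref{theorem: asymptotic} as quoted. It is stated on $\pi^{-1}(W(t))$ for any closed dilation-invariant $W\subset\B'$, so no contradiction argument is needed.
\item \cite{m-s-w-w} prove polynomial decay; the exponential rate is due to \cite{d-n} and \cite{fredrickson}. This is harmless, since only an $o(1)$ error is used.
\end{itemize}
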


\subsection{The lower bound on $dF$ in the horizontal direction} Combining Proposition \ref{proposition: bound on fiber} with the previous results regarding the Goldman symplectic form enables us to obtain a lower bound on the derivative of $F$ in the horizontal direction of the fibration $P$.

\begin{lemma}\label{lemma: horizontal lower bound} Let  $W\subset\B''$ be a closed  dilation invariant sector. Then  there exist a dilation invariant neighbourhood $V\subset \QDD$ of $W$, and a constant $C>0$, such that for every $(X,A)\in \dett^{-1}(V)$, and every $v=[(\mu,\dot{A})] \in T_{(X,A)}\S$, we have
\begin{equation}\label{eq-laba}
\sqrt{\|\dett (A)\|} \, \|[\mu]\|_{*} \le C\|dF([(\mu,\dot{A})])\|_{g_{L^2}}.
\end{equation}
\end{lemma}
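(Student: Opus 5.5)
We will prove Lemma \ref{lemma: horizontal lower bound} by playing the Goldman symplectic form against the fibre estimate of Proposition \ref{proposition: bound on fiber}. The idea is that $[\mu]$ pairs with $\QD(X)$, and Proposition \ref{proposition: symplectic} expresses that pairing symplectically. Take $V$ to be the neighbourhood given by Proposition \ref{proposition: bound on fiber}, shrunk if necessary so that $\dett^{-1}(V)$ lies inside the region of Lemma \ref{lemma-elem-1} and Lemma \ref{lemma-r-3}, where $d(\dett)$ is surjective on $\sl_2(\Omega^1(X))$.

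\textbf{Step 1: choose a test direction.} Fix $v=[(\mu,\dot{A})]$ and pick $\phi\in\QD(X)$ with $\|\phi\|=1$ that realises the maximum in (\ref{eq-fins}), so that $\left<\mu,\phi\right>=\|[\mu]\|_*$. Since $d(\dett)$ is surjective at $A$, there is a fibre vector $w=[(0,\dot{A}_2)]\in T_{(X,A)}\S_X$ with $d(\dett)(\dot{A}_2)=-\phi$. Applying Proposition \ref{proposition: symplectic} then gives
$$
(\RH^*\xi)(v,w)=\left<\mu,\phi\right>=\|[\mu]\|_*.
$$

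\textbf{Step 2: bound the pairing from above.} Taking real parts and using Proposition \ref{proposition-symplectic}, together with $F=\NAH\circ\RH$, we get $\re(\RH^*\xi)=F^*\omega_I$. Hence
$$
\|[\mu]\|_*=\omega_I\big(dF(v),dF(w)\big)\le \|dF(v)\|_{g_{L^2}}\,\|dF(w)\|_{g_{L^2}},
$$
because $\omega_I=g_{L^2}(I\cdot,\cdot)$ and $I$ is an isometry. Here we use that $\left<\mu,\phi\right>$ is real: $\|\cdot\|_*$ is defined as a maximum of the real pairing, so we take the pairing to be its real part, and after rotating $\phi$ the value is the positive real number $\|[\mu]\|_*$. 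Proposition \ref{proposition: bound on fiber} bounds the second factor by
$$
\|dF(w)\|_{g_{L^2}}\le C\|\phi\|/\sqrt{\|\dett(A)\|}=C/\sqrt{\|\dett(A)\|}.
$$
Multiplying through by $\sqrt{\|\dett(A)\|}$ yields (\ref{eq-laba}).

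\textbf{Main obstacle.} The only delicate point is the bookkeeping in Step 2, namely the compatibility of the real parts and the sign conventions. The estimate itself is a formal consequence of Propositions \ref{proposition: symplectic}, \ref{proposition-symplectic} and \ref{proposition: bound on fiber}.
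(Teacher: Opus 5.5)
Your proposal is correct and follows essentially the same argument as the paper: you choose the extremal unit $\phi$, realise $-\phi=d(\dett)(\dot{A}_2)$ by a vertical vector $w=[(0,\dot{A}_2)]$, evaluate $(\RH^*\xi)(v,w)=\|[\mu]\|_*$ via Propositions \ref{proposition: symplectic} and \ref{proposition-symplectic}, bound $\omega_I(dF(v),dF(w))$ by $\|dF(v)\|_{g_{L^2}}\|dF(w)\|_{g_{L^2}}$, and control $\|dF(w)\|_{g_{L^2}}$ with Proposition \ref{proposition: bound on fiber}. Your explicit remark that $\left<\mu,\phi\right>$ must be made real and positive by rotating $\phi$ addresses a point the paper leaves implicit, and you handle it correctly.
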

\begin{proof} Let $A \in \dett^{-1}(W)\cap \big(\sl_2(\Omega^1(X_0))\sslash \SLC\big)$. Then the map 
$\dett:\sl_2(\Omega^1(X_0))\sslash  \SLC \to \B$
is a local diffeomorphism near $A$ because $W\subset \B''$. 
Thus, we can choose a dilation invariant neighbourhood $V_1\subset \QDD$ of $W$ so that the map 
$\dett:\sl_2(\Omega^1(X)) \sslash \SLC\to \QD(X)$ is local diffeomorphism near any $A\in \dett^{-1}(V_1)\cap \big(\sl_2(\Omega^1(X))\sslash \SLC\big)$.

On the other hand, we let $V_2\subset \QDD$ be a  dilation invariant neighbourhood of $W$ from
Proposition \ref{proposition: bound on fiber} so that (\ref{eq: bound on fiber}) holds. Set $V=V_1\cap V_2$, and fix $(X,A)\in \dett^{-1}(V)$.

By $\phi\in \QD(X)$ we denote the unit norm quadratic differential such that
\begin{equation}\label{eq-ext}
\left<\mu,\phi\right>=\|[\mu]\|_{^*}.
\end{equation}
Since $(X,A)\in \dett^{-1}(V_1)$, there exists $\dot{B}\in \sl_2(\Omega^1(X))$ such that 
$$
d(\dett)(\dot{B})=-\phi.
$$ 
Furthermore, since  $\dot{B}^{0,1}=0$, we have that 
$$
w=[(0,\dot{B})]\in T_{(X,A)}\S
$$ 
is a well defined tangent vector.  
By Proposition \ref{proposition: symplectic} and Proposition \ref{proposition-symplectic} we have 
$-i(F^*\eta_J)(v,w)=(\RH^*\xi)(v,w)$ which yields
$$
-i(F^*\eta_J)(v,w)=-\left<\mu,d(\dett)(\dot{B})\right>=-\left<\mu,-\phi\right>=\|[\mu]\|_*. 
$$
Thus
\begin{equation}\label{eq53}
-i\eta_J(dF(v),dF(w))=\|[\mu]\|_*.
\end{equation}

Recall that for tangent vectors $u_1,u_2$ we have $\re\big(-i\eta_J(u_1,u_2)\big)=\omega_I(u_1,u_2)=
g_{L^2}(Iu_1,u_2)$. Since $I$ is unitary with respect to $g_{L^2}$, we obtain the estimate
$$
|\re\big(-i\eta_J(u_1,u_2)\big)|\leq\|u_1\|_{g_{L^2}}\|u_2\|_{g_{L^2}}. 
$$
So (\ref{eq53}) yields
    \begin{align}\label{eq54}
    \|[\mu]\|_*\le     \|dF(v)\|_{g_{L^2}}\|dF(w)\|_{g_{L^2}}.
    \end{align}
    On the other hand by Proposition \ref{proposition: bound on fiber} we have
    \begin{align}\label{eq55}
        \|dF(w)\|_{g_{L^2}}\leq\frac{C}{\sqrt{\|\dett A\|}}\|d(\dett)(\dot{B})\|.
    \end{align}
    where $C>0$ is the constant from Proposition \ref{proposition: bound on fiber}.
But $\|d(\dett)(\dot{B})\|=\|\phi\|=1$, so combining (\ref{eq54}) and (\ref{eq55})  proves the lemma.
\end{proof}

\section{Proof of theorem \ref{thm-3}}

We  collate  previous results in the following lemma.

\begin{lemma}\label{lemma-okolina} Suppose we are given the following:
\begin{itemize}
\item[(a)] a closed  dilation invariant sector $W\subset\B''$, 
\vskip .1cm
\item[(b)] a neighbourhood $U \subset \Teich$ of $X_0$.
\end{itemize}  
Then  there exist a dilation invariant neighbourhood $V\subset \QDD$ of $W$,  and a constant $t_0>0$, with the following properties
\begin{itemize}
\item[(1)] the conclusions of Proposition  \ref{proposition: WKB}  and Lemma \ref{lemma: horizontal lower bound} hold on $V$,
\vskip .1cm
\item[(2)] for every $t\ge t_0$, and for  every $\phi \in W(t)$, there exists $(X_0,A_0)\in \dett^{-1}(V\cap \B)$ such that 
$(\pi\circ F)(X_0,A_0)=\phi$,
\vskip .1cm
\item[(3)] $F$ is a local diffeomorphism on $\dett^{-1}(V)$, and $\dett^{-1}(V)\subset \S_U$.
\end{itemize} 
\end{lemma}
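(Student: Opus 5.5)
The plan is to build $V$ as an intersection of finitely many dilation invariant neighbourhoods of $W$, each responsible for one of the properties (1)--(3), and then to choose $t_0$ at the end.

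\textbf{Step 1: collecting the neighbourhoods.} Let $V_1$ be the dilation invariant neighbourhood of $W$ given by Proposition \ref{proposition: WKB}, and let $V_2$ be the one given by Lemma \ref{lemma: horizontal lower bound}. Both conclusions are stated for all points of $\dett^{-1}(V_i)$, so they persist when we pass to any smaller dilation invariant neighbourhood. Hence (1) holds on any $V\subset V_1\cap V_2$.

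\textbf{Step 2: property (3).} Since $W\subset \B''\subset\B^{\ur}$, Lemma \ref{lemma-elem-1} provides a dilation invariant neighbourhood $V_F\subset\Syst$ of $\dett^{-1}(\B^{\ur})\supset\dett^{-1}(W)$ on which $F$ is a local diffeomorphism. We then need a dilation invariant $V_3\subset\QDD$ with $\dett^{-1}(V_3)\subset V_F\cap \S_U$. To obtain it, work in the trivialisation (\ref{eq-QL}) and normalise by $\|L(\cdot)\|=1$, which is legitimate by dilation invariance.

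The set $\{\phi\in W:\|\phi\|=1\}$ is compact in $\B$ because $W$ is closed in $\B\setminus\{0\}$. By Lemma \ref{lemma-r-3}, each such $\phi$ has a neighbourhood $U_\phi\subset\QDD$ over which $\dett$ is a finite covering. Its preimage splits into finitely many sheets, each of which is a small neighbourhood of one of the finitely many points of $\dett^{-1}(\phi)$. Every such point lies in $V_F$, and its base point is $X_0\in U$. Shrinking $U_\phi$, we may therefore assume that $\dett^{-1}(U_\phi)\subset V_F$ and that $Q(U_\phi)\subset U$.

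Cover the unit-norm part of $W$ by finitely many such $U_\phi$, take the union, and let $V_3$ be its cone under $\R_{>0}$-dilations. Because $V_F$ and $\S_U$ are dilation invariant and $\dett(X,tA)=t^2\dett(X,A)$, the cone still satisfies $\dett^{-1}(V_3)\subset V_F\cap\S_U$.

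This step is where I expect the main (though mild) obstacle. The difficulty is controlling the preimages $\dett^{-1}$ uniformly, and not just the images. It is handled by the local finiteness of the covering in Lemma \ref{lemma-r-3} together with compactness of the unit sphere in $W$.

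\textbf{Step 3: property (2).} Set $V=V_1\cap V_2\cap V_3$. Then $Z=V\cap\B$ is a dilation invariant neighbourhood of $W$ in $\B$. Theorem \ref{thm-3 weak-1} applied to this $Z$ gives $t_0>0$ such that $(\pi\circ F)(\dett^{-1}(Z))\supset W(t)$ for every $t\ge t_0$. The elements of $\dett^{-1}(Z)$ lie over $X_0$, so they have the required form $(X_0,A_0)\in\dett^{-1}(V\cap\B)$. This is exactly (2).

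Finally, shrinking $V$ never destroys (1) or (3). Fixing $V$ first and then choosing $t_0$ as above therefore yields all three properties simultaneously.
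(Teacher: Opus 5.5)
Your proof is correct and follows the paper's own argument: intersect the neighbourhoods from Proposition~\ref{proposition: WKB}, Lemma~\ref{lemma: horizontal lower bound} and Lemma~\ref{lemma-elem-1} with a set enforcing $\dett^{-1}(V)\subset\S_U$, then apply Theorem~\ref{thm-3 weak-1} to $V\cap\B$. The paper handles $\S_U$ more directly by intersecting with $\dett(\S_U)=Q^{-1}(U)$, and it simply asserts that Lemma~\ref{lemma-elem-1} yields a neighbourhood in $\QDD$; your Step 2 supplies the omitted passage from a neighbourhood in $\Syst$ to one in $\QDD$ via Lemma~\ref{lemma-r-3}.
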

\begin{proof} By $V_0\subset \QDD$ we denote the neighbourhood of $W$ from Proposition  \ref{proposition: WKB}, and
by $V_1\subset \QDD$  the neighbourhood of $W$ from Lemma \ref{lemma: horizontal lower bound}.
By Lemma \ref{lemma-elem-1} there exists a neighbourhood $V_2\subset \QDD$  of $W$ such that  $F$ is a local diffeomorphism on $\dett^{-1}(V_2)$. 
Set $V=V_0\cap V_1\cap V_2 \cap \dett(\Syst_U)$ 
(note that $\dett(\S_U)$ is the union of spaces $\QD(X)$ where $X\in U$). Then (1) and  (3) hold for this choice of $V$. 
Since $V\cap \B$ is dilation invariant neighbourhood of $W$, from 
Theorem \ref{thm-3 weak-1}  we conclude that (2) holds as well for some $t_0>0$. The lemma is proved.
\end{proof}

\subsection{The endgame} 

Suppose   $W\subset \B''$ is a closed dilation invariant sector. Let $V\subset \QDD$ be its  neighbourhood  from Lemma \ref{lemma-okolina}. Let $t>0$, and choose  $\phi \in W(t)$. Set  $\M_{\phi}=\pi^{-1}(\phi)$.
To prove Theorem \ref{thm-3}, it suffices to show that  $\M_{\phi} \subset F(\dett^{-1}(V))$ when $t$ is large enough.

 Assuming $t$ is large enough,  by (2) from Lemma \ref{lemma-okolina} there is $(X_0,A_0) \in \dett^{-1}(V\cap \B)$ such that $q_0=F(X_0,A_0)\in \M_{\phi}$. Let $q\in\M_{\phi}$ be any other point. We need to show $q\in F\big(\dett^{-1}(V)\big)$. Let $D_0$ be the constant from Lemma \ref{lemma: diam bound}. Then there exist  
\begin{itemize}
\item a unit speed (with respect to $g_{L^{2}}$) smooth path $\alpha:[0,s_0]\rightarrow\M_{\phi}$,
\vskip .1cm
\item  $\alpha(0)=q_0,\ \alpha(s_0)=q$,
\vskip .1cm
\item  the length of $\alpha$ is at most $D_0$, that is, we have $s_0\le D_0$.
\end{itemize}

\begin{definition} 
We define the subset $S\subset[0,s_0]$ consisting of $s\in[0,s_0]$ such that there exists a path 
$\beta_s:[0,s]\to \dett^{-1}(V)$ so that $F\circ\beta_s=\alpha$ on $[0,s]$. Let $s^*=\sup S$.
\end{definition}
Suppose $s_1,s_2\in S$.  By $(3)$ from Lemma \ref{lemma-okolina} we know  that $F$ is a local  diffeomorphism on $\dett^{-1}(V)$. This implies that the paths $\beta_{s_{1}}$ and $\beta_{s_{2}}$ agree on $[0,s_1]\cap [0,s_2]$. Thus, there exists the (unique) path $\beta:[0,s^*)\to \dett^{-1}(V)$ so that $F\circ\beta_s=\alpha$ on $[0,s^*)$. Let $(X(s),A(s))=\beta(s)$, for $s\in [0,s^*)$.

\begin{claim}\label{claim-jedan} The set $S$ is a non-empty open subset of $[0,s_0]$. 
\end{claim}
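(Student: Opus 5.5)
Non-emptiness is immediate. Take $s=0$ and the constant path $\beta_0(0)=(X_0,A_0)$. This point lies in $\dett^{-1}(V\cap\B)\subset\dett^{-1}(V)$ and satisfies $F(\beta_0(0))=q_0=\alpha(0)$, so $0\in S$.

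For openness, fix $s_1\in S$ with its lift $\beta_{s_1}:[0,s_1]\to\dett^{-1}(V)$, and put $p=\beta_{s_1}(s_1)$. By property (3) of Lemma \ref{lemma-okolina}, $F$ is a local diffeomorphism on $\dett^{-1}(V)$. Since $\dett^{-1}(V)$ is open in $\Syst$ (as $V$ is open and $\dett$ is continuous), there is an open neighbourhood $N\subset\dett^{-1}(V)$ of $p$ such that $F|_N:N\to F(N)$ is a diffeomorphism onto an open set $F(N)\ni\alpha(s_1)$. By continuity of $\alpha$ there is $\epsilon>0$ with $\alpha\big([s_1-\epsilon,s_1+\epsilon]\cap[0,s_0]\big)\subset F(N)$.

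We then extend the lift by setting $\beta(s)=(F|_N)^{-1}(\alpha(s))$ for $s\in[s_1,s_1+\epsilon]\cap[0,s_0]$. This agrees with $\beta_{s_1}$ at $s_1$, so concatenating gives a continuous path into $\dett^{-1}(V)$ lifting $\alpha$ on $[0,\min(s_1+\epsilon,s_0)]$. Hence every $s$ in that interval lies in $S$.

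For $s<s_1$, restricting $\beta_{s_1}$ to $[0,s]$ shows directly that $s\in S$, so $S$ is an interval containing $0$. It follows that $S$ contains the relatively open neighbourhood $[0,s_1+\epsilon)\cap[0,s_0]$ of $s_1$, and $S$ is open in $[0,s_0]$.

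The only point needing care is that $\dett^{-1}(V)$ is open, so that the local inverse of $F$ stays inside it. This holds because $V$ is an open neighbourhood of $W$ in $\QDD$. The real work, showing $S$ is closed via the a priori bound of Lemma \ref{lemma: horizontal lower bound}, is not needed for this claim.
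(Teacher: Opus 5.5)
Your proof is correct and follows the same argument as the paper: $0\in S$ via the lift starting at $(X_0,A_0)$, and openness because $F$ is a local diffeomorphism on the open set $\dett^{-1}(V)$, so the lift can be continued past any $s\in S$. You simply make explicit what the paper leaves implicit: the local inverse on $N$, the concatenation of lifts, and the fact that $S$ is downward closed, which gives relative openness to the left of $s_1$ and at $s_0$.
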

\begin{proof}
Clearly $0\in S$ (thus, $S$ is non-empty). Suppose $s\in S$. Since $F$ is a local  diffeomorphism on $\dett^{-1}(V)$, and since $\beta(s)\in  \dett^{-1}(V)$, we can continue the path on some neighbourhood of $s$ which proves the claim. 
\end{proof}
To prove Theorem \ref{thm-3} we need to show that $S$ is also a closed subset of $[0,s_0]$. It suffices to prove the following lemma.

\begin{lemma}\label{lemma-dosta} There exists $t_0$ so that for each $t\ge t_0$, and for each $\phi\in W(t)$, there exists a neighbourhood $V_1\subset \QDD$ of $\phi$ such that $\overline{V}_1$ is a compact subset of $V$, and so 
that $(X(s),A(s))\in \dett^{-1}(V_1)$ for every $s\in [0,s^*)$.
\end{lemma}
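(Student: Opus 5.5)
The plan is to show that along the lifted path $\beta$ both the Teichm\"uller coordinate $X(s)$ and the quadratic differential $\dett(A(s))$ stay close to $X_0$ and to $\phi/4$ respectively, uniformly in $s\in[0,s^*)$ and in $\phi\in W(t)$ once $t$ is large. Since $\dett$ is a finite covering near $\dett^{-1}(W)$ (Lemma \ref{lemma-r-3}), such control yields a compact neighbourhood $V_1$ with $\overline{V}_1\subset V$ that contains the whole path.

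\textbf{Step 1: the Teichm\"uller coordinate barely moves.} We have $F\circ\beta=\alpha$, $\alpha$ has unit speed, and $s^*\le s_0\le D_0$ by Lemma \ref{lemma: diam bound}. For $v=\beta'(s)=[(\mu(s),\dot A(s))]$, Lemma \ref{lemma: horizontal lower bound} gives
$$
\sqrt{\|\dett(A(s))\|}\,\|[\mu(s)]\|_*\le C\|\alpha'(s)\|_{g_{L^2}}=C.
$$
Hence $d_\Teich(X(s),X_0)\le C\int_0^{s}\|\dett(A(\tau))\|^{-1/2}d\tau$. This bound is only useful if we already know $\|\dett(A(\tau))\|\gtrsim t$. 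We therefore run a bootstrap. Let $s'\le s^*$ be maximal such that on $[0,s')$ we have $\|L(\dett A)-\phi/4\|\le\epsilon\|\phi\|$ and $d_\Teich(X,X_0)\le\eta$. On that interval $\|\dett A\|\ge ct$, and so $d_\Teich(X(s),X_0)\le CD_0/\sqrt{ct}$, which is smaller than $\eta/2$ when $t$ is large.

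\textbf{Step 2: the determinant stays near $\phi/4$.} Since $F(\beta(s))=\alpha(s)\in\M_\phi$, we have $(\pi\circ F)(X(s),A(s))=\phi$ for all $s$. Proposition \ref{proposition: WKB}, applicable because $\dett(A(s))\in V(ct)$ on $[0,s')$, gives
$$
\|\phi-L(\dett A(s))\|\le\delta(\eta,ct)\,\|L(\dett A(s))\|.
$$
With $\eta$ small and $t$ large this yields $\|L(\dett A(s))-\phi\|\le(\epsilon/2)\|\phi\|$. Here I am normalising $\dett$ so that $4\pi\circ F\approx L\circ\dett$, and I absorb the factor $4$ into the notation. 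Both bootstrap inequalities thus improve by a factor $2$, so by continuity $s'=s^*$. The starting point $\beta(0)=(X_0,A_0)$ satisfies the hypotheses thanks to Lemma \ref{lemma-okolina}(2) together with Proposition \ref{proposition: WKB}.

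\textbf{Step 3: conclusion.} Take $V_1$ to be the set of quadratic differentials $\psi\in\QD(X)$ with $d_\Teich(X,X_0)<\eta$ and $\|L(\psi)-\phi\|<\epsilon\|\phi\|$. For small $\eta$ and $\epsilon$, and using dilation invariance together with the closedness of $W$ in $\B\setminus\{0\}$, its closure is compact and lies in $V$, with constants uniform over $\phi\in W(t)$ after scaling. The main obstacle is the circularity in Step 1: the horizontal bound requires a large determinant, while control of the determinant through Proposition \ref{proposition: WKB} requires $X$ near $X_0$. The bootstrap above is designed to break this, and the delicate point is keeping the constants uniform, in particular making sure $\delta(\eta,ct)$ is small uniformly in the chosen range.
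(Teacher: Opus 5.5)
Your proposal follows the paper's proof: Lemma \ref{lemma: horizontal lower bound}, the unit speed of $\alpha$ and $s_0\le D_0$ give $d_\Teich(X(s),X_0)=O(t^{-1/2})$ (this is Claim \ref{claim-dosta-1}), and Proposition \ref{proposition: WKB} then keeps $L(\dett(A(s)))$ relatively close to $4\phi$ (not $\phi/4$ as you write, so $V_1$ should be centred at $4\phi$), with $V_1$ the product neighbourhood taken from the trivialisation $(Q,L)$. Your bootstrap does make explicit a step the paper passes over, since the paper simply replaces $\sqrt{\|\dett(A(s))\|}$ by $\sqrt{t}$ and applies Proposition \ref{proposition: WKB} at scale $t$; the one thing to add is that starting it at $s=0$ needs $\|\dett(A_0)\|\ge c\|\phi\|$ for a fixed $c>0$, which is not in the statement of Lemma \ref{lemma-okolina}(2) but holds for the preimage built in the proof of Theorem \ref{thm-3 weak-1}, where $\dett(A_0)$ lies in a ball of relative radius $r$ about a large multiple of $\phi_0$.
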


\begin{proof}
We begin with the  claim. 
\begin{claim}\label{claim-dosta-1} There exists $C>0$ such that 
$$
d_\Teich(X(s),X_0)\le  \frac{C}{\sqrt{t}}
$$
for every $s\in [0,s^*)$, and every $t>0$.
\end{claim}
\begin{proof} Differentiating $\beta:[0,s^*)\to \dett^{-1}(V)$, we define $[(\mu(s),\dot{A}(s))]   \in T_{(X(s),A(s))}$ by
$$
d\beta\left(\frac{\partial}{\partial{s}}\right)=[(\mu(s),\dot{A}(s))].
$$
Let $C_1$ be the constant from  Lemma \ref{lemma: horizontal lower bound}. Then (since the conclusion of this lemma holds on $V$) we have
$$
\sqrt{t} \, \|[\mu(s)]\|_{*} \le C_1\|dF([(\mu(s),\dot{A}(s))])\|_{g_{L^2}},
$$
for $s\in [0,s^*)$.  But,  $F\circ \beta:[0,s^*)\to \M_{\phi}$ is infinitesimally isometric so
$$
\|dF([(\mu(s),\dot{A}(s))])\|_{g_{L^2}}=1, \quad \forall s\in [0,s^*).
$$
We get
$$
\|[\mu(s)]\|_{*} \le \frac{C_1}{\sqrt{t}}, \quad \forall s\in [0,s^*),
$$
which in turn proves the claim by letting $C=s_0C_{1} \le D_0C_1$.
\end{proof}

Since the conclusion of  Proposition  \ref{proposition: WKB} holds on $V$, we conclude that for large enough $t$, and every $s\in [0,s^*)$, the inequality
$$
\left\|4\phi- L\big(\dett (A(s))\big) \right\|\le  \,\delta\big(d_\Teich(X(s),X_0),t\big) \|L\big(\dett (A(s))\big)\|
$$
holds, where $\delta:[0,\infty)\times (0,\infty) \to [0,\infty)$ is a function such that $\delta(s,t)\to 0$ when $s\to 0$ and $t\to \infty$. 
By the triangle inequality we have
$$
\|L\big(\dett (A(s))\big)\|\le \frac{4\|\phi\|}{1-\delta\big(d_\Teich(X(s),X_0),t\big)}.
$$
Combining the previous two inequalities  with  Claim \ref{claim-dosta-1} yields
\begin{equation}\label{eq-zavrs}
\left\|4\phi- L\big(\dett (A(s))\big) \right\|\le  \,
\frac{4\delta\left(\frac{C}{\sqrt{t}},t\right)}{1-\delta\left(\frac{C}{\sqrt{t}},t \right)} \|\phi\|,
\end{equation}
for every $s\in [0,s^*)$. We are ready to construct the required neighbourhood $V_1$. 

\begin{definition} Recall that $B_r(\phi)=\{\psi\in \B:\, \|\psi-\phi\|<r\}$.  We also let $B_r(X_0)\subset \Teich$ denote the ball of radius  $r$ centred at $X_0$ (with respect to the Teichm\"uller metric).
\end{definition}

Since $V$ is dilation invariant, there exists $\epsilon>0$ so that $B_{\epsilon \|\phi \|}(\phi)$ is compactly contained in $V$ for every $\phi\in W$.
Let $V_1(\epsilon)=(Q,L)^{-1}\big(B_\epsilon(X_0) \times B_{\epsilon \|\phi \|}(\phi)   \big)$, where  $(Q,L):\QDD\to \Teich \times \B$ is the trivialisation defined above (see (\ref{eq-QL})). Then  $\overline{V}_1(\epsilon)$ is a compact subset of $V$. On the other hand, from Claim \ref{claim-dosta-1} and (\ref{eq-zavrs}), it follows that when $t$ is large enough we have that $(X(s),A(s))\in V_1(\epsilon)$ for every $s\in [0,s^*)$. This proves the lemma.
\end{proof}

\section{Bounding $dF$ in the fibre direction}

In this section we prove Proposition \ref{proposition: bound on fiber} assuming  Proposition \ref{proposition: curvature} which is stated below. As we shall see below, it is natural  to use the Schwarz lemma in order to establish an upper bound on the derivative of $dF$. However, we remind the reader that $\pi\circ F$ is not a holomorphic map. Indeed, $F$ is holomorphic with respect to the complex structure $J$ on $\M^s$, while $\pi$ is holomorphic with respect to the complex structure $I$ on $\M^s$. 

Let $\phi_0\in \B$, and suppose $r>0$ is small enough so that $B_{r}(\phi_0)\subset \B''$. 
Thanks to the  results by  Mazzeo-Swoboda-Weiss-Witt \cite{m-s-w-w}
on the asymptotic geometry of the Hitchin moduli space we know that
the metric $g_{L^2}$ is asymptotic to the semiflat metric $g_{\sf}$. Therefore, the restriction of hyperk\"ahler structure to 
$\pi^{-1}(B_{tr}(t\phi_0))$ is modelled on a flat product $B\times T$, where $B$ is a ball and $T$ is a flat torus. 
\begin{remark}
An important feature of the semiflat metric is that it is conic on the base. So for $t$ large, $\pi^{-1}(B_{rt}(t\phi_0))$ is modelled on $B\times T$ with $\mathrm{diam} (B)\sim \sqrt{t}$. This explains the factor $\sqrt{\|\dett(A)\|}$ in (\ref{eq: bound on fiber}).
\end{remark}

We construct a metric $g_J$ on $\pi^{-1}(B_{tr}(t\phi_0))$ which is K\"ahler with respect to $J$, and   which has negative holomorphic sectional curvature. In turn, this will enable us to apply Yau's Schwarz lemma \cite{yau}
to bound the derivative of $F$. The following proposition will be proved in the next section.

\begin{proposition}\label{proposition: curvature} Let $Z\subset \B''$ be a closed subset consisting of norm one quadratic differentials. There are constants $\kappa, C, r, t_0>0$  such that for each $\phi_0\in Z$, and every $t\ge t_0$, there is a metric $g_J$ on $\pi^{-1}(B_{rt}(t\phi_0))$ with the following properties
\begin{itemize}
\item[(a)]  $g_J$ is K\"ahler with respect to $J$,
\vskip .1cm
\item[(b)] $g_J$  has holomorphic sectional curvature $<-\kappa$,
\vskip .1cm
\item[(c)]  $C^{-1}\|v\|_{g_{L^{2}}}\leq \sqrt{t}\cdot \|v\|_{g_{J}}\leq C\|v\|_{g_{L^{2}}}$, for every tangent vector $v$.
\end{itemize}
\end{proposition}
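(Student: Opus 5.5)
The plan is to build $g_J$ first for the \emph{semiflat} hyperk\"ahler structure, where everything is explicit, and then transfer it to the true structure using the exponential convergence $g_{L^2}\to g_\sf$ of Mazzeo--Swoboda--Weiss--Witt \cite{m-s-w-w}. Write $n=\dim_\C\B$. Over the ball $B_{rt}(t\phi_0)\subset\B''$ (with $r$ chosen uniformly for $\phi_0\in Z$, using that $Z$ is closed and the discriminant locus is conic), the Hitchin fibration is a torus fibration. Action--angle coordinates identify $\pi^{-1}(B_{rt}(t\phi_0))$ with $\Omega\times\R^{2n}/\Lambda$. Here the $x_j$ are real flat affine coordinates on the base, given by real parts of periods of $\sqrt{\phi}$ over a symplectic basis of the spectral curve; $\Omega\subset\R^{2n}$ is the image of the ball; and the $y_j$ are the dual angle coordinates.

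\textbf{Step 1: semiflat model in structure $J$.} For the semiflat structure, the fibres are $\omega_J$-Lagrangian, and $J$ pairs base directions with fibre directions. Accordingly $w_j=x_j+iy_j$, $j=1,\dots,2n$, are $J_\sf$-holomorphic coordinates, and $\Lambda$ acts by purely imaginary translations. Since the semiflat metric is conic on the base, after rescaling $x\mapsto x/\sqrt t$ the domain $\Omega$ becomes, uniformly in $t$ and $\phi_0\in Z$, a set comparable to a fixed ball of radius $\sim r$. By shrinking $r$, I fix a slightly larger product of intervals $\prod_j(-a_j\sqrt t,a_j\sqrt t)\supset\Omega$ (after translating).

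For each $j$, the strip $S_j=\{|\re w_j|<a_j\sqrt t\}$ carries its Poincar\'e metric, with potential
\[
\psi_j=-\log\cos\left(\frac{\pi\,\re w_j}{2a_j\sqrt t}\right).
\]
This potential depends only on $\re w_j$, so it is invariant under imaginary translations and descends to the quotient by $\Lambda$. I set
\[
g_J=i\partial_J\bar\partial_J\sum_j\psi_j .
\]
This is K\"ahler for $J_\sf$. It is a product of metrics of curvature $\equiv-1$ (with the normalisation fixed), and a product of $m$ factors each of holomorphic sectional curvature $\le -1$ has holomorphic sectional curvature $\le -1/m$. So (b) holds with $\kappa\sim 1/(4n)$. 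On the shrunk region where $|x_j|\le a_j\sqrt t/2$, each $\psi_j''$ is comparable to $1/(a_j^2 t)$. Hence $g_J$ is comparable to $t^{-1}$ times the Euclidean metric $\sum|dw_j|^2$, which in turn is uniformly comparable to $g_\sf$ (flat in these coordinates, uniformly in $\phi_0\in Z$). This gives (c) for the model, with the factor $\sqrt t$. Finally I shrink $r$ so that the stated ball lies in the shrunk region.

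\textbf{Step 2: pass to the true structure.} By \cite{m-s-w-w}, on $\pi^{-1}(B_{rt}(t\phi_0))$ the difference between $(g_{L^2},I,J,K)$ and the semiflat structure decays like $e^{-c\sqrt t}$ in all $C^k$ norms, measured in the rescaled coordinates. I would then carry out three substeps.
\begin{itemize}
\item[(i)] Produce genuine $J$-holomorphic coordinates $\tilde w_j$ that are $C^2$-close to the $w_j$. The natural choice is $\log$ of suitable holomorphic functions on the character variety (traces along WKB curves, as in Lemma \ref{lemma: WKB-1}), or else solve $\bar\partial_J$ with exponentially small data by H\"ormander on the model.
\item[(ii)] Define $g_J$ by the same potential with $\re\tilde w_j$ in place of $\re w_j$. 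Equivalently, take $i\partial_J\bar\partial_J$ of $\sum_j\psi_j(\re\tilde w_j)$; this is automatically $J$-K\"ahler once positive, giving (a).
\item[(iii)] Observe that holomorphic sectional curvature and the comparability (c) depend continuously, in $C^2$ and $C^0$ respectively, on the potential and on $J$. Exponentially small perturbations therefore preserve (b) with $\kappa/2$ and (c) with $2C$, once $t\ge t_0$.
\end{itemize}

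\textbf{Main obstacle.} I expect Step 2(i) to be the hard part. One needs $J$-holomorphic coordinates, invariant under the torus periodicity, whose $C^2$ deviation from the semiflat ones is controlled \emph{after the $\sqrt t$ rescaling}, uniformly in $\phi_0\in Z$. My first attempt is to avoid coordinates altogether. I would write the model potential $\Psi=\sum_j\psi_j(x_j)$ as a function of the smooth action coordinates $x_j$, which are defined independently of the complex structure. I then take $g_J=dd^c_J\Psi$, using the true $J$, and check directly that it is $J$-invariant and closed; this holds automatically, since $dd^c_J$ of any function is. It then suffices to show that $dd^c_J\Psi-dd^c_{J_\sf}\Psi$ is $O(e^{-c\sqrt t})$ relative to $g_\sf/t$. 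This needs only $C^1$ control of $J-J_\sf$, which is exactly what \cite{m-s-w-w} provides. The curvature estimate then needs $C^2$ control of the resulting metric, which requires one more derivative of $J-J_\sf$.
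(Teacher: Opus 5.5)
Your final plan, in the ``main obstacle'' paragraph, is to take $g_J=dd^c_J\Psi$ with $\Psi=\varphi\circ\pi$ a potential pulled back from the base, and then perturb off the semiflat structure. That is exactly the paper's route, so Step 2(i)--(ii) can be dropped. Where you differ is Step 1, which is a cleaner version of the paper's Lemma \ref{lemma: curvature}. Because $w_j=x_j+iy_j$ are global $J_\sf$-holomorphic coordinates on the whole region, your semiflat model is \emph{exactly} a lattice-invariant product of Poincar\'e strips at scale $\sqrt t$. The paper instead matches a flat $T^*\C^n$ model to the semiflat structure only at the point $(\phi_0,0)$ and works in a small neighbourhood at unit scale. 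It then transports this by $\beta_t$ and uses Lemma \ref{lemma: diam bound} to see that the transported neighbourhood contains a fundamental domain. A minor correction: $g_\sf$ is not flat in your coordinates. It is $g_{ij}(x)(dx_idx_j+dy_idy_j)$ with $g_{ij}$ homogeneous of degree $0$ in $x$, and that homogeneity is what gives the uniform comparability you use.

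The step that is not supported as written is the derivative control of $J-J_\sf$. What \cite{m-s-w-w} provides (Theorem \ref{theorem: asymptotic}) is the $C^0$ statement $g_{L^2}=g_\sf(1+o(1))$. Since $\omega_J=\omega_{J_\sf}$ is shared, this gives $C^0$-closeness of $J$ and $J_\sf$. It does not give the scale-invariant bounds $|\nabla^k(J-J_\sf)|_{g_\sf}=o(t^{-k/2})$ your perturbation needs: $k\le 1$ for (a) and (c), and $k\le 3$ for (b). It is three, not two, because $dd^c_J\Psi$ already involves $\partial J$. The exponential rate is also not from \cite{m-s-w-w}; it is due to \cite{fredrickson}, but no rate is needed.

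The missing ingredient is the paper's Lemma \ref{lemma-conv}, applied as follows:
\begin{itemize}
\item[(1)] Rescale isotropically by $\beta_t$, scaling both $x$ and $y$ by $\sqrt t$. This preserves $I$ and $J_\sf$ and multiplies $g_\sf$ by $t$.
\item[(2)] Work on the cover $T^*\B'$ rather than on the quotient, where the fibres would collapse to size $O(t^{-1/2})$.
\item[(3)] Upgrade the $C^0$ convergence of the $I$-K\"ahler Ricci-flat metrics $t^{-1}\beta_t^*g_{L^2}\to g_\sf$ to $C^\infty$ convergence on compact sets, by Calabi--Yau type estimates.
\item[(4)] Since $\beta_t^*\omega_J=t\,\omega_J$, conclude that $\beta_t^*J\to J_\sf$ in $C^\infty$.
\end{itemize}
Your lattice is constant in $(x,y)$, so after rescaling a fundamental domain sits in a fixed compact set. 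Your lattice-invariant $\Psi$ then yields (a)--(c) on all of $\pi^{-1}(B_{rt}(t\phi_0))$.
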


\subsection{The Yau-Schwartz lemma} 

Denote by $\mathbb{B}_R^n\subset\C^n$ the  ball of radius $R$ centred at the origin. The following is a version of the Yau-Schwartz lemma \cite{yau} proved by Royden in \cite{royden}.

\begin{lemma}\label{lemma-yau} Let $\kappa>0$, and suppose that $(M,g)$ is a K\"ahler manifold with holomorphic sectional curvature $<-\kappa$. Then there exists a constant $C>0$ so that for every  holomorphic map  $h:\mathbb{B}_R^n\rightarrow M$ we have 
 $$
 \|dh(v)\|_g \leq \frac{C}{R}\|v\|
 $$
where $v\in T_0\mathbb{B}_R^n$ is a tangent vector of Euclidean norm $\|v\|$, and where $\|dh(v)\|_g $ is the $g$-norm of the vector $dh(v)\in T_{h(0)}M$.
\end{lemma}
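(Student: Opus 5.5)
We plan to reduce the statement to the one-dimensional Ahlfors--Schwarz lemma by restricting $h$ to complex lines through the origin. The point is that only the holomorphic sectional curvature of $(M,g)$ enters. The constant $C$ will depend only on $\kappa$: we expect $C=2/\sqrt{\kappa}$, up to the normalisation convention for curvature.

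\textbf{Step 1 (restriction to a disc).} Fix $v\in T_0\mathbb{B}_R^n$ with $v\neq 0$, and set $e=v/\|v\|$. Consider the holomorphic map
$$
u:\Delta_R\to M,\qquad u(z)=h(ze),
$$
where $\Delta_R=\{z\in\C:\,|z|<R\}$ lies in $\mathbb{B}_R^n$. Then $du_0(\partial_z)=dh(e)$, so it suffices to show $\|du_0(\partial_z)\|_g\le C/R$.

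\textbf{Step 2 (curvature of the pulled-back metric).} Write $u^*g=\lambda(z)|dz|^2$ with $\lambda=\|du(\partial_z)\|_g^2\ge 0$. This is a conformal pseudo-metric on $\Delta_R$. Its zeros are isolated (unless $u$ is constant, in which case there is nothing to prove), and $\log\lambda$ is smooth away from them. Away from the zeros, $u$ is an immersion. The Gaussian curvature of the induced metric on the holomorphic curve $u(\Delta_R)$ equals the holomorphic sectional curvature of $g$ in the direction $du(\partial_z)$ minus a nonnegative second-fundamental-form term; this is the Gauss equation for complex submanifolds of a K\"ahler manifold. Hence $K_{u^*g}\le -\kappa$ wherever $\lambda>0$.

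\textbf{Step 3 (Ahlfors lemma).} We compare $u^*g$ with the Poincaré metric of $\Delta_R$ of constant curvature $-\kappa$:
$$
P_R=\frac{4R^2}{\kappa\,(R^2-|z|^2)^2}\,|dz|^2 .
$$
The Ahlfors--Schwarz lemma gives $u^*g\le P_R$ on $\Delta_R$. The usual proof runs the maximum principle for $\log(\lambda/P_R)$ on slightly smaller discs $\Delta_{R'}$ and lets $R'\to R$. It handles the zeros of $\lambda$, where $\log\lambda=-\infty$, automatically, and it needs no completeness of $M$. Evaluating at $z=0$ yields
$$
\|dh(e)\|_g^2=\lambda(0)\le \frac{4}{\kappa R^2},
$$
so $\|dh(v)\|_g\le \frac{2}{\sqrt{\kappa}}\,\frac{\|v\|}{R}$, which is the claim with $C=2/\sqrt{\kappa}$.

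\textbf{Main obstacle.} The only delicate point is Step 2: checking that the curvature of the induced metric on a holomorphic curve is bounded above by the ambient holomorphic sectional curvature. We will verify this by the standard computation $\partial\bar\partial\log\|\partial_z u\|^2$. Using the Kähler identities, this equals the curvature term $R(u',\bar u',u',\bar u')/\|u'\|^2$ plus $\bigl(\|\nabla u'\|^2\|u'\|^2-|\langle\nabla u',u'\rangle|^2\bigr)/\|u'\|^4\ge 0$, and the second term is nonnegative by Cauchy--Schwarz. This gives $\Delta\log\lambda\ge 2\kappa\lambda$ (in the appropriate normalisation), which is exactly the differential inequality the Ahlfors argument needs.
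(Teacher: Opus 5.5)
Your proof is correct, but it takes a different route from the paper. The paper gives no argument and simply cites Royden's form of Yau's Schwarz lemma. In that form the source is a complete K\"ahler manifold with Ricci curvature bounded below; here it would be $\mathbb{B}_R^n$ with its complex hyperbolic metric, which at the centre is comparable to $|dz|^2/R^2$. The target is K\"ahler with holomorphic sectional curvature bounded above by a negative constant. Royden's refinement is what lets holomorphic (rather than bisectional) curvature of the target suffice when the source has dimension bigger than one, and this needs his algebraic curvature lemma inside the maximum-principle computation.

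You sidestep all of this. The estimate is only needed at the centre, and every tangent vector $v$ at $0$ is tangent to the holomorphic disc $z\mapsto zv/\|v\|$, $|z|<R$, inside the ball. So you reduce to a one-dimensional source, where the fact that curvature decreases on holomorphic curves, plus the classical Ahlfors lemma, is enough.

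Your route has three advantages. It is elementary and self-contained. It gives an explicit constant $C=2/\sqrt{\kappa}$ that is independent of $n$. And since it only uses that the domain contains these complex discs, it applies verbatim to balls of any complex norm. In particular it covers the $L^1$-balls in $\QD(X)$ from the remark after the lemma, and the ones used in proving Proposition \ref{proposition: bound on fiber}, with a constant that does not depend on $X$ at all. The cited route has to compare such a ball with a Euclidean one, which is why the paper's constant depends on $X$ ranging over a compact set. What Royden's version gives beyond this, namely bounds at every point of a general complete source, is never used in the paper.

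One harmless normalisation point: read $\lambda$ as $\|du_0(\partial_x)\|_g^2=\|dh(e)\|_g^2$, with $e$ viewed as a real tangent vector. If you instead use the holomorphic vector $\partial_z=\frac12(\partial_x-i\partial_y)$, you only pick up a fixed constant factor.
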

\begin{remark}
The analogous statement holds if we replace $\mathbb{B}_R^n$ with the ball of radius $R$ in $\QD(X)$ (assuming $n=3g-3$ where $g$ is the genus of the Riemann surface $X$). In this case $\|v\|$ refers to the norm on $\QD(X)$, while the constant $C$  also depends on $X$ (although it is uniform in $X$ providing that $X$ belongs to some  compact subset of $\Teich$). 
\end{remark}

\subsection{Preparing the proof of Proposition \ref{proposition: bound on fiber} } 
We start by defining an initial neighbourhood of $W$ which will be refined to extract the resulting neighbourhood $V\subset \QDD$ of $W$ from the statement of Proposition  \ref{proposition: bound on fiber}.

\begin{definition} Let $V\subset \QDD$ be a dilation invariant set. 
We define $\Ne_r(V)\subset \QDD$ by letting $\psi\in \Ne_r(V)$ if  there exists $\phi\in V$  belonging to  the same $\QD(X)$ as $\psi$, and such that
$\|\psi-\phi\|<r\|\phi\|$. Note that $\Ne_r(V)$ is also dilation invariant.
\end{definition}

\begin{lemma}\label{lemma-okolina-1} There exists a dilation invariant neighbourhood $V\subset \QDD$ of a closed dilation invariant set $W\subset \B''$, and $r>0$, with the following properties
\begin{itemize}
\item[(1)] if $\QD(X)\cap \Ne_{r}(V)\ne \emptyset$ then the isomorphism $L:\QD(X)\to \B$ is $2$-bilipschitz,
\vskip .1cm
\item[(2)] $F$ is a local diffeomorphism on  $\dett^{-1}(\Ne_{r}(V))$,
\vskip .1cm
\item[(3)] the closure of the set $(\pi\circ F)\big(\dett^{-1}(\Ne_{r}(V))\big)$ is contained in $\B''$.
\end{itemize}
\end{lemma}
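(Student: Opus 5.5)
The plan is to build $V$ by intersecting finitely many dilation invariant neighbourhoods of $W$, each securing one property, and then pick $r$ small. Two conventions are used throughout. First, by the Remark after Theorem \ref{thm-3}, "closure contained in $\B''$" is read as closure in $\B\setminus\{0\}$; the point $0$ has non-simple zeros, so the literal closure in $\B$ can never lie in $\B''$. Second, $W\subset\B''$ is closed in $\B\setminus\{0\}$, so its unit-norm slice $W^1=\{\phi\in W:\|\phi\|=1\}$ is compact. All neighbourhoods below are determined by their behaviour near $W^1$ and then extended by dilation.

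\textbf{Property (1).} Since $(Q,L)$ is a smooth trivialisation, we may normalise $L$ to be the identity on $\QD(X_0)=\B$. By continuity in $X$, there is $\epsilon>0$ such that $L:\QD(X)\to\B$ is $2$-bilipschitz for all $X\in B_\epsilon(X_0)$. We take $V_0=Q^{-1}(B_\epsilon(X_0))\cap\Ne_1(W)$, which is a dilation invariant neighbourhood of $W$. The operation $\Ne_r$ does not change the base point $X$, so every $\QD(X)$ meeting $\Ne_r(V)$, for any $V\subset V_0$, satisfies (1).

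\textbf{Property (2).} Lemma \ref{lemma-elem-1} gives a dilation invariant neighbourhood $V_2$ of $\dett^{-1}(\B^{\ur})$ on which $F$ is a local diffeomorphism. By Lemma \ref{lemma-r-3}, $\dett$ is a finite covering near each point of $W^1$. Together with compactness of $W^1$, this gives $\delta>0$ such that $\dett^{-1}(\Ne_\delta(W)\cap Q^{-1}(B_\delta(X_0)))\subset V_2$. Both sets are dilation invariant, so this holds at all scales. We then shrink $V$ and require $r\le\delta/3$, so that $\Ne_r(V)\subset\Ne_\delta(W)$.

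\textbf{Property (3).} Apply Proposition \ref{proposition: WKB} to the closed sector $\overline{\Ne_\delta(W)}\cap\B''$, after shrinking $\delta$ so that this set is still closed in $\B\setminus\{0\}$ and contained in $\B''$. This gives $4(\pi\circ F)(X,A)=L(\dett A)\,(1+o(1))$, uniformly as $d_\Teich(X,X_0)\to0$ and $\|\dett A\|\to\infty$. Hence for $\epsilon$ small and $\|\dett A\|$ large, $(\pi\circ F)(X,A)$ lies in the cone $\frac14\Ne_{3\delta}(W)=\Ne_{3\delta}(W)$. Its closure in $\B\setminus\{0\}$ lies in $\B''$, since $\B''$ is open and dilation invariant and $W^1$ is compact.

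\textbf{Main obstacle.} The hard part is the region where $\|\dett A\|$ is bounded. Here Proposition \ref{proposition: WKB} gives no control, yet a dilation invariant $V\supset W$ necessarily contains such points. Near norm $0$ the images accumulate only at $0$, which is excluded by the convention above; this uses continuity of $F$ and $F(X,0)=(\O\oplus\O,0)$. For intermediate norms I would first try the following. The set $\dett^{-1}(\Ne_r(V))\cap\{s\le\|\dett A\|\le t_0\}$ has compact closure in $\Syst$, because $\dett$ is proper over $\B^{\ur}$ near $W$ and $X$ ranges over a compact ball. Its image under $\pi\circ F$ is therefore compact, and it remains to show that this image avoids $\B\setminus\B''$. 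I do not see how to guarantee that from the earlier results. If it fails, the fallback is to establish (3) only on the large-norm part $\dett^{-1}(\Ne_r(V)(t_0))$. That is all that the application in the proof of Proposition \ref{proposition: bound on fiber} requires, since there $\|\dett A\|\ge t$ with $t$ large, but it is weaker than (3) as stated.
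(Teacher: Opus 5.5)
Your construction is the paper's own. The paper also intersects three neighbourhoods: one coming from continuity of the trivialisation $L$, one from Lemma \ref{lemma-elem-1}, and one from Proposition \ref{proposition: WKB}. It then chooses $V,r$ with $\Ne_r(V)$ contained in that intersection.

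The obstacle you flag is genuine, and the paper does not overcome it. The paper simply asserts that Proposition \ref{proposition: WKB} yields a dilation invariant $V_3$ for which the closure of $(\pi\circ F)(\dett^{-1}(V_3))$ lies in $\B''$. But the error term $\delta(d_\Teich(X,X_0),t)$ in that proposition is small only when $t$ is large and $X$ is close to $X_0$. Nothing in the paper controls $\pi\circ F$ at bounded scale. There is also no evident reason why the image of an open cone at scale $\sim 1$ should miss the complex hypersurface $\B\setminus\B''$.

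So property (3) is established only in your fallback form, on $\dett^{-1}(\Ne_r(V)(t_0))$, and this applies to the paper's argument as much as to yours. That form is all the proof of Proposition \ref{proposition: bound on fiber} uses. Lemma \ref{lemma-dokaz} and Proposition \ref{proposition: curvature} are only invoked for $t\ge t_0$. One adjustment is needed there: define $Z$ as the closure of the normalised images $\phi/\|\phi\|$ of large-norm points, not, literally, as the unit-norm points of the closure of the image. Your WKB argument then places $Z$ in a compact subset of $\B''$.

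There are two corrections to your write-up.

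\emph{Small norms.} Your treatment of small norms does not work. Knowing that the images tend to $0$, and that $0$ is excluded from the closure, does not put the image points themselves in $\B''$. Since $\B\setminus\B''$ is a cone through $0$, small image points may lie on it. So the region near $0$ is exactly as uncontrolled as the intermediate one. A first-order analysis of $F$ near the trivial representation might say more, but it is not in the paper, and it is delicate because $\M$ is singular at $(\O\oplus\O,0)$.

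\emph{Neighbourhoods in $\QDD$.} $\Ne_\delta$, as defined in the paper, only enlarges within a fibre $\QD(X)$, so $\Ne_\delta(W)\subset\B$. Hence your $V_0=Q^{-1}(B_\epsilon(X_0))\cap\Ne_1(W)$ is not a neighbourhood of $W$ in $\QDD$. The set $\Ne_\delta(W)\cap Q^{-1}(B_\delta(X_0))$ in your step for (2) has the same defect. To fix this:
\begin{itemize}
\item[(i)] replace $\Ne_\delta(W)$ by $L^{-1}(\Ne_\delta(W))$;
\item[(ii)] take $V\subset L^{-1}(\Ne_{\delta/2}(W))$;
\item[(iii)] choose $r$ small compared with $\delta$, so that $\Ne_r(V)\subset L^{-1}(\Ne_\delta(W))$. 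The bilipschitz constant of $L$ enters here, so $r\le\delta/3$ is not automatically enough.
\end{itemize}
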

\begin{proof}  We begin by observing that linear isomorphisms  (trivialisations) $L:\QD(X)\to \B$ converge to the identity map $\B\to \B$ when $d_\Teich(X,X_0)\to 0$. Thus, we can find a dilation invariant neighbourhood $V_1\subset \QDD$ so that if $\QD(X)\cap V_1\ne \emptyset$ then
$L$ is $2$-bilipschitz.

On the other hand, by Lemma \ref{lemma-elem-1} there exists a dilation invariant neighbourhood $V_2\subset \QDD$ of $W$  so that $F$ is a local diffeomorphism on $\dett^{-1}(V_2)$. Moreover, using  Proposition \ref {proposition: WKB}  we can choose a dilation invariant neighbourhood $V_3\subset \QDD$ of $W$ so that the closure of the set $F(\dett^{-1}(V_3))$ is contained in $\B''$. Set $V_0=V_1\cap V_2 \cap V_3$. Then there exists $r>0$, and a dilation invariant neighbourhood $V$, so that  
$\Ne_{r}(V)\subset V_0$. This $V$ and $r$ satisfy the conditions (1), (2), and (3),  of the lemma.
\end{proof}

In the remainder of the section we fix $V_0$ and $r_0$ satisfying the conclusions of the previous lemma. 
Suppose $\psi_0\in \QD(X)\cap V_0$ be of norm one, and let $0<r\le r_0$. Consider the ball $B_{rt}(t\psi_0)$. Since $B_{rt}(t\psi_0)$ is contractible and contained in $\Ne_{r}(V_0)$, it follows that  $\dett^{-1}$ has finitely many inverse branches on $B_{rt}(t\psi_0)$ (see Lemma \ref{lemma-elem-1}). We choose one of them and denote it by $\dett^{-1}:B_{rt}(t\psi_0)\to \Syst$. We define the map 
$$
f:B_{rt}(t\psi_0)\to \M
$$ 
by letting $f=F\circ \dett^{-1}$.
Then inequality (\ref{eq: bound on fiber}) from Proposition \ref{proposition: bound on fiber}  is equivalent to
\begin{equation}\label{eq-bf2}
\|df(\dot{\psi})\|_{g_{L^{2}}}\le C\frac{\|\dot{\psi}\|}{\sqrt{t}},\quad\quad \forall \dot{\psi}\in \QD(X).
\end{equation}
Thus, it remains to find a dilation invariant neighbourhood $V\subset V_0$ of $W$, and $0<r\le r_0$, so that (\ref{eq-bf2}) holds  for every such map $f$, and every $\dot{\psi}$, assuming $t$ is large enough.

\subsection{Proof of Proposition \ref{proposition: bound on fiber} } We need  the following lemma
before we can employ the Yau-Schwartz lemma to prove (\ref{eq-bf2}). Furthermore,  in this lemma we 
define the neighbourhood $V\subset \QDD$ of $W$ from the statement of Proposition \ref{proposition: bound on fiber}.

We let $Z$ denote the set of norm one differentials in the closure of the set 
$(\pi\circ F)\big(\dett^{-1}(\Ne_{r}(V))\big)$. Then $Z$ is a closed subset  of $\B''$. Let $s>0$  denote the  constant $r$ from  Proposition \ref{proposition: curvature} (for this choice of the set $Z$).

\begin{lemma}\label{lemma-dokaz} There exists a dilation invariant neighbourhood
$V\subset V_0 \subset \QDD$  of $W$, and  constants $r,t_0>0$, so that for every  norm one 
$\psi_0\in \QD(X)\cap V$ we have that 
$$
(\pi\circ f)\big(B_{rt}(t\psi_0)\big)\subset B_{st}((\pi\circ f)(t\phi_0)) 
$$
assuming $t\ge t_0$.
\end{lemma}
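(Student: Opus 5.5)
The plan is to deduce the lemma directly from Proposition \ref{proposition: WKB}, using the fact that $\pi\circ f$ is, after rescaling, uniformly close to $\psi\mapsto \tfrac14 L(\psi)$. Note that $t\phi_0$ in the statement should be read as $t\psi_0$. Likewise the target ball $B_{st}(\cdot)$ should be read as the ball of radius $s\|(\pi\circ f)(t\psi_0)\|$, which is comparable to $st/4$; any slack is absorbed by shrinking $r$.

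\textbf{Step 1: approximate $\pi\circ f$ on the ball.} For $\psi\in B_{rt}(t\psi_0)$ we have $\dett(\dett^{-1}(\psi))=\psi$. Take $V\subset V_0$ to be the intersection of $V_0$ with the neighbourhood from Proposition \ref{proposition: WKB}, further intersected with a set of the form $Q^{-1}(B_\epsilon(X_0))$ with $\epsilon$ small. Then Proposition \ref{proposition: WKB} gives
$$
\left\|4(\pi\circ f)(\psi)-L(\psi)\right\|\le \delta\big(d_\Teich(X,X_0),\,(1-r)t\big)\,\|L(\psi)\|.
$$
Here we use that $\|\psi\|\ge (1-r)t$ and that $\Ne_r(V)$ lies inside the region where the proposition applies; this may require replacing the proposition's $V$ by a slightly smaller $V$ so that its $r$-neighbourhood stays inside. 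By Lemma \ref{lemma-okolina-1}(1), $L$ is $2$-bilipschitz, so $\|L(\psi)\|\le 2(1+r)t$.

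\textbf{Step 2: triangle inequality.} Apply Step 1 both at $\psi$ and at the centre $t\psi_0$, and combine with
$$
\|L(\psi)-L(t\psi_0)\|\le 2rt.
$$
This gives
$$
\|(\pi\circ f)(\psi)-(\pi\circ f)(t\psi_0)\|\le \tfrac{1}{4}\big(2rt+4\delta\,(1+r)t\big).
$$
Choose $r$ and $\epsilon$ small and $t_0$ large so that $\delta\le s/100$ and $r\le s/100$. The right-hand side is then at most a small multiple of $st$. Since $\|(\pi\circ f)(t\psi_0)\|\approx \|L(t\psi_0)\|/4\ge t/8$, this small multiple is below $s\,\|(\pi\circ f)(t\psi_0)\|$, which gives the inclusion.

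\textbf{Main obstacle.} The delicate point is uniformity of $\delta$. The function $\delta$ must go to $0$ because $X$ is close to $X_0$ (Teichmüller distance below $\epsilon$) and $t$ is large, and this must hold uniformly over all unit $\psi_0\in V$. That uniformity is exactly what Proposition \ref{proposition: WKB} provides, once $V$ is cut down to fibres over $B_\epsilon(X_0)$ and $\epsilon$ is chosen so that $\delta(\epsilon,t)$ is small for all $t\ge t_0$. One must also check that $V$ stays dilation invariant and remains a neighbourhood of $W$ after these restrictions. This holds because $W\subset \B=\QD(X_0)$ and the restriction is only in the base direction.
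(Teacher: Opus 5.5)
Your proposal is correct and follows the paper's own argument. You use Proposition \ref{proposition: WKB}, restricted to fibres over a small Teichm\"uller ball, to get $\|4(\pi\circ f)(\psi)-L(\psi)\|\le \frac{s}{100}\|L(\psi)\|$; you then combine this with the triangle inequality, the $2$-bilipschitz bound on $L$, and a choice of $V$ and $r$ with $\Ne_r(V)$ inside the good region. Your reading of $\phi_0$ as $\psi_0$ is right, and the smaller target radius $s\|(\pi\circ f)(t\psi_0)\|$ you establish implies the stated inclusion; it is also the form needed to apply Proposition \ref{proposition: curvature} at the rescaled point.
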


\begin{proof}  From Proposition \ref {proposition: WKB} we can find a dilation invariant neighbourhood $V_1\subset \QDD$ of $W$, and $t_1>0$, so that 
\begin{equation}\label{eq-zoost}
\left\|4(\pi\circ f)(\psi)- L\big(\psi\big) \right\|\le  \frac{s}{100} \|L\big(\psi\big)\|,
\end{equation}
for every $\psi\in V_1(t)$ assuming $t\ge t_1$.

Let $V_2=V_0\cap V_1$. Suppose $\psi_1,\psi_2\in \QD(X)\cap V_2$. Combining the triangle inequality with (\ref{eq-zoost}), and the fact that $L$ is 2-bilipschitz,  yields
\begin{align*}
&\quad\quad\quad\quad\quad\quad\quad\quad\quad \left\|(\pi\circ f)(\psi_1)-(\pi\circ f)(\psi_2) \right\|= \\ 
&\left\|\left((\pi\circ f)(\psi_1)-\frac{L(\psi_1)}{4}\right)-\left((\pi\circ f)(\psi_2)-\frac{L(\psi_2)}{4}\right)+
\frac{1}{4}L(\psi_1-\psi_2)\right\| \\
&\quad\quad\quad\quad\quad\quad\quad\quad\quad \le \frac{s}{200} \left(\|\psi_1\| + \|\psi_1\|\right)+\frac{1}{2}\|(\psi_1-\psi_2)\|.\\
\end{align*}

The previous estimate implies that 
$$
(\pi\circ f)\big(B_{rt}(t\psi_0)\big)\subset B_{st}((\pi\circ f)(t\phi_0)) 
$$
as long as $r<s$, and $B_{rt}(t\psi_0)\subset V_2$. It remains to observe that there exists a dilation invariant neighbourhood $V$ of $W$, and $r>0$, such that $\Ne_r(V)\subset V_2$.
\end{proof}

Suppose $t\ge t_0$. The map 
$$
f:B_{rt}(t\psi_0)\to \pi^{-1}\big( B_{st}((\pi\circ f)(t\phi_0))  \big)
$$ 
is holomorphic with respect to the restriction of the complex structure $J$ to $\pi^{-1}\big( B_{st}((\pi\circ f)(t\phi_0))  \big)$. We apply Lemma \ref{lemma-yau} to $f$ (see the remark below that lemma), and obtain the inequality
$$
\|df(\dot{\psi})\|_{g_{J}}\le C_1\frac{\|\dot{\psi}\|}{rt},
$$
where $C_1$ is the constant from Lemma \ref{lemma-yau}. On the other hand, from
Proposition \ref{proposition: curvature} we have the estimate
$$
\|df(\dot{\psi})\|_{g_{L^{2}}} \le C_2 \sqrt{t} \|df(\dot{\psi})\|_{g_{J}}
$$
for some constant $C_2$. Putting the last two estimates together proves (\ref{eq-bf2}) by letting $C=C_1C_2/r$.

\section{Constructing the negatively curved K\"ahler metric}
In this section we recall the notion of the semiflat metric on $\M'$ and its connection with the Hitchin $L^2$ metric. In particular, we prove Lemma \ref{lemma: diam bound} and  Proposition \ref{proposition: curvature}. Both of them are statements  about the Hitchin metric $g_{L^{2}}$. The idea is to prove each of them for the semiflat metric first, and then use the fact that $g_\sf$ and $g_{L^{2}}$ are asymptotic to each other to deduce the corresponding statement for $g_{L^{2}}$.

\subsection{The two hyperk\"ahler structures} Beside  $(g_{L^2},I,J,K)$, there is another hyperk\"ahler structure $(g_{\sf},I,J_{\sf},K_{\sf})$ on $\M'$. The two hyperk\"ahler structures share the same complex structure $I$ and the same holomorphic symplectic form 
$$
\eta_I = \omega_J + i \omega_K = \omega_{J_{\sf}} + i \omega_{K_{\sf}}.
$$ 
So it is convenient to adopt the equivalent definition of a hyperk\"ahler structure which consists of a complex symplectic manifold $(M,I,\eta)$ together with a K\"ahler metric $g$ compatible with $\eta$. Thus, the two hyperk\"ahler structures on $\M'$ have the same underlying complex symplectic manifold $(\M',I,\eta_I)$, but they have  two different K\"ahler metrics $g_{L^2}$ and $g_{\sf}$.

By Proposition 2.1 in \cite{m-s-w-w} there is a symplectomorphism between $(\M',\eta_I)$ and $(T^*\B'/\Gamma,\eta)$. Here $\Gamma=\bigcup_{\phi\in\B'}\Gamma_\phi$ is a local system over $\B'$, and $\eta$ is covered by the canonical symplectic form on the cotangent bundle $T^*\B'$.
Under the identification $\M'\cong T^*\B'/\Gamma$, the metric $g_{\sf}$ is covered by a hyperk\"ahler metric on $T^*\B'$ which is also called the semiflat metric and denoted $g_{\sf}$. 

The semiflat metric $g_{\sf}$ on $T^*\B'$ is naturally associated to the special K\"ahler metric $g_{\sk}$ on $\B'$ through the holomorphic Lagrangian fibration $\pi:\M'\to \B'$. We point to \cite{freed} as the standard reference for special K\"ahler manifolds and the associated hyperk\"ahler metric on their cotangent bundle (see also \cite{m-s-w-w} Section 2.4.1).

Mazzeo-Swoboda-Weiss-Witt \cite{m-s-w-w} proved that the two metrics are asymptotic to each other. The  relation between $g_{L^2}$ and $g_{\sf}$ is given by the following  result  (see  Theorem 1.2 in \cite{m-s-w-w}).

\begin{theorem}\label{theorem: asymptotic} Suppose $W\subset \B'$ is a closed, dilation invariant subset. Then the estimate 
$$
g_{L^2}=g_{\sf}\cdot(1+o(1))
$$
holds on $\pi^{-1}(W(t))\subset \M'$, where $o(1)\to 0$ with $t\to \infty$.
\end{theorem}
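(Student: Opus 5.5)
The statement is Theorem 1.2 of \cite{m-s-w-w}, and in the paper we simply invoke it. If one wanted to reprove it, the plan would follow the strategy of Mazzeo--Swoboda--Weiss--Witt: compare the harmonic metric of $(E,\theta)$ with an explicit approximate solution, and compare the $L^2$ norms of the corresponding infinitesimal deformations.

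By dilation invariance it suffices to work along rays. Write a point of $\pi^{-1}(W(t))$ as $(E,t\theta)$ with $\|\dett(\theta)\|=1$, so that $\phi=\dett(\theta)$ ranges over the compact set $W\cap\{\|\phi\|=1\}$. This set consists of differentials with simple zeros, so the zeros of $\phi$ stay uniformly separated, and every constant below can be chosen uniformly over this compact family.

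\textbf{Step 1 (approximate solutions).} For each $(E,\theta)$ one builds the limiting configuration $h_\infty$. This is the decoupled solution of Hitchin's equations away from the zeros of $\phi$: there the eigenline decomposition $E=E_+\oplus E_-$ is orthogonal and the curvature is flat. Near each simple zero one glues in the rescaled fiducial (Painlev\'e-type) solution, producing an approximate metric $h_t^{\mathrm{app}}$. The key analytic estimate is that the true harmonic metric satisfies $h_t=h_t^{\mathrm{app}}e^{\gamma_t}$ with $\gamma_t=O(e^{-\epsilon t^{1/2}})$ in weighted $C^k$ norms. This comes from the invertibility of the linearised operator $\Delta+t^2|\theta|^2$, which is bounded below by a multiple of $t^2$ away from the zeros, together with a contraction mapping argument. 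This is compatible with Mochizuki's estimate (\ref{eq7}).

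\textbf{Step 2 (tangent vectors).} A tangent vector to $\M'$ at $(E,t\theta)$ is represented by its harmonic (Coulomb gauge) representative $(\dot A,\dot\Phi)$. The metric $g_{L^2}$ is $2\int_{X_0}\big(|\dot A|^2+|\dot\Phi|^2\big)$ computed with $h_t$. The semiflat metric is the same expression evaluated on the variations of the limiting configurations. These split into a horizontal part, which is the variation of $\phi$ measured by the special K\"ahler metric $\int|\dot\phi|^2/|\phi|$, and a vertical part, which is a flat unitary deformation of the spectral line bundle. One then shows that the harmonic representative differs from the limiting-configuration representative by a term whose $L^2$ norm is $o(1)$ times the semiflat norm. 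Away from the zeros this follows from the exponential decay in Step 1. Near the zeros the discs have radius $\sim t^{-2/3}$, and their contribution is small because the model solution is explicit and the relevant integrands are integrable.

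\textbf{Step 3 and the main obstacle.} Putting Steps 1 and 2 together gives $g_{L^2}=g_\sf(1+o(1))$, uniformly on $\pi^{-1}(W(t))$. The main obstacle is Step 2: one must control the gauge-fixing correction, that is, solve for the infinitesimal gauge transformation moving the approximate variation into Coulomb gauge, with estimates uniform in $t$. This requires a lower bound for the linearised operator that degenerates near the zeros of $\phi$. My first attempt would be to use weighted Sobolev spaces adapted to the conic model near each zero, following \cite{m-s-w-w}. In the present paper none of this is redone, and the theorem is used as stated.
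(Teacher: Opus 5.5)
Your proposal takes the same approach as the paper: the paper gives no proof of this statement and simply quotes it as Theorem 1.2 of \cite{m-s-w-w}, as you do. Your optional sketch (limiting configurations, an exponentially small correction away from the zeros of $\phi$, and a polynomially small contribution from discs of radius $\sim t^{-2/3}$ around the zeros) is a faithful outline of the Mazzeo--Swoboda--Weiss--Witt argument, with one cosmetic slip: with Higgs field $t\theta$ and $\|\dett(\theta)\|=1$ the correction $\gamma_t$ decays like $e^{-\epsilon t}$, since the rate $e^{-\epsilon\sqrt{t}}$ belongs to the paper's normalisation $\|\phi\|>t$, and in any case only an $o(1)$ error is needed.
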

\begin{remark} In \cite{m-s-w-w} it was showed that the error term decays polynomially. Dumas-Neitzke \cite{d-n}, and Fredrickson \cite{fredrickson}, improved this estimate by showing that the decay is exponential. But for our purposes only the weakest form stated above is required.
\end{remark}

\subsection{Proof of Lemma \ref{lemma: diam bound}} 
For $\phi\in \B'<\QD(X_0)$, we let $Y_\phi$ denote the corresponding spectral curve which is a double cover of $X_0$ defined so that the lift of $\phi$ to $Y_\phi$ is a square of an Abelian differential. The fibre $M_{\phi}=\pi^{-1}(\phi)$ is isomorphic to  $\text{Prym}(Y_\phi)$. Moreover, the semiflat metric on $M_\phi \cong \text{Prym}(Y_\phi)$ agrees with the standard $L^2$ norm on the space of harmonic 1-forms on $Y_\phi$. 

Since the spectral curve $Y_{t\phi}$ does not depend on $t$, it follows that fibres $M_{t\phi}$ are mutually isometric with respect to $g_\sf$. So, there exists a constant $D_\phi>0$ so that the diameter of the fibre $M_{t\phi}$ is bounded above by $D_\phi$ for every $t$. Moreover, given  a closed dilation invariant sector $W\subset \B'$, we can find $D_0>0$ so that the $g_\sf$-diameters of the fibres $M_{t\phi}$, where $t>0$ and $\phi\in W$, are bounded above by $D_0$.

On the other hand, the metrics $g_\sf$ and $g_{L^{2}}$ are asymptotic along the rays $t\phi$, where $\phi\in W$, and 
$t \to +\infty$. Thus, there exists $D>0$ so that the $g_{L^{2}}$-diameters of the fibres $M_{t\phi}$, where $t\ge1$ and $\phi\in W$, are bounded above by $D$.

\subsection{The scaling action}
Let $t>0$. We define the action $\alpha_t:\B'\to \B'$ by $\alpha_t(\phi)=t\phi$, where $\phi\in \B'$. An important feature of the special K\"ahler metic $g_{\sk}$ on $\B'$ is that it is 'conic' (see for example page 159 in \cite{m-s-w-w}) which means that
\begin{equation}\label{eq-alpha}
\alpha_t^*(g_{\sk})=t\cdot g_{\sk}.
\end{equation}
We define the action $\beta_t:T^*\B'\to T^*\B'$  by 
$$
\beta_t(\phi,p)=(\alpha_t(\phi), \, t\cdot(\alpha_t^{-1})^*(p))
$$
where $p$ is a point in $T^*_\phi \B'$. The following lemma is proved in the appendix.

\begin{lemma}\label{lemma-beta} The map $\beta_t$ preserves the three complex structures $(I,J_\sf,K_\sf)$ on $T^*\B'$ associated to $g_{\sf}$. Moreover, we have
\begin{equation}\label{eq-beta}
\beta_t^*(g_{\sf})=t\cdot g_{\sf}.
\end{equation}
\end{lemma}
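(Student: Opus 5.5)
The plan is to use the explicit local description of the semiflat structure on $T^*\B'$ coming from special K\"ahler geometry (Freed \cite{freed}) and check how each piece scales under $\beta_t$.

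\textbf{Coordinates.} Near a point of $\B'$ choose conjugate special coordinates: holomorphic functions $z_1,\dots,z_n,w_1,\dots,w_n$ on $\B'$. Concretely, $z_i=\int_{a_i}\lambda$ and $w_i=\int_{b_i}\lambda$ are periods of the Seiberg--Witten differential $\lambda=\sqrt{\phi}$ over a symplectic basis of the anti-invariant homology of $Y_\phi$. In these coordinates:
\begin{itemize}
\item[(i)] the special K\"ahler form is $\omega_\sk=\frac{i}{2}\sum_j dz_j\wedge d\bar w_j+\text{c.c.}$, equivalently $g_\sk=\mathrm{Im}(\tau_{ij})\,dx_i\,dx_j$ in the real flat coordinates $x_i=\re z_i$, $y_i=\re w_i$, with $\tau_{ij}=\partial w_i/\partial z_j$;
\item[(ii)] the flat real coordinates $(x,y)$ on $\B'$, together with the fibre coordinates $(p_x,p_y)$ of $T^*\B'$ dual to them, give the semiflat metric $g_\sf=g_\sk\oplus g_\sk^{-1}$ in the Hessian/Legendre form;
\item[(iii)] the complex structures $I,J_\sf,K_\sf$ are built from the complex coordinates $z_j$ (or $w_j$) on the base, $\tau$, and the fibre one-forms $d\zeta=dp_y-\tau\,dp_x$ (up to conventions).
\end{itemize}

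\textbf{Scaling of periods.} Since $\sqrt{t\phi}=t^{1/2}\sqrt{\phi}$ and the spectral curve is unchanged, $\alpha_t^*z_j=t^{1/2}z_j$ and $\alpha_t^*w_j=t^{1/2}w_j$. Consequently $\tau$ is $\alpha_t$-invariant. This gives $\alpha_t^*g_\sk=t\,g_\sk$, recovering (\ref{eq-alpha}).

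\textbf{Action on the fibres.} The fibre map $p\mapsto t(\alpha_t^{-1})^*p$ multiplies the fibre coordinates by $t\cdot t^{-1/2}=t^{1/2}$. This is because the dual coordinates of a covector transform inversely to $dx$, picking up $t^{-1/2}$, and then the extra factor $t$ is applied. Hence in coordinates $\beta_t$ is simply the uniform dilation by $t^{1/2}$ of all of $(x,y,p_x,p_y)$, with $\tau$ unchanged.

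\textbf{Conclusion.} Every complex structure in (iii) is given by constant-in-scale formulas in these linear coordinates, with coefficients depending only on $\tau$. Since $\beta_t$ dilates every coordinate by the same factor and fixes $\tau$, it preserves each of $I,J_\sf,K_\sf$. The metric $g_\sf$ is quadratic in the differentials of these coordinates with $\tau$-dependent coefficients, so $\beta_t^*g_\sf=(t^{1/2})^2g_\sf=t\,g_\sf$, which is (\ref{eq-beta}).

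We also need $\beta_t$ to descend compatibly with the lattice $\Gamma$. This holds because $\Gamma_\phi$ is spanned by the $dx_i,dy_i$ (integral periods), and these are mapped to $\Gamma_{t\phi}$ precisely by $t(\alpha_t^{-1})^*$ up to the $t^{1/2}$ normalisation. Only the statement on $T^*\B'$ is claimed, however.

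\textbf{Main obstacle.} The delicate step is pinning down the conventions in the semiflat formulas, so that the fibre coordinates are indeed the ones that scale by exactly $t^{1/2}$ (the factor $t$ in $\beta_t$ is designed for this). I would verify this first by checking that $\beta_t^*\eta=t\,\eta$ for the canonical symplectic form, which is immediate from the definition. Then $I$-holomorphicity of $\beta_t$, which holds because it is a fibrewise linear lift of the holomorphic map $\alpha_t$, together with uniqueness of $g_\sf$ given $(I,\eta,g_\sk)$, forces $\beta_t^*g_\sf=t\,g_\sf$. Finally, $J_\sf$ and $K_\sf$ are determined by $g_\sf$, $I$ and $\eta$, so they are preserved as well.
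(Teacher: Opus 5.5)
Your main argument is correct. It rests on the same scaling mechanism as the paper's proof in Appendix A: base directions pick up a factor $t$ from the conic property, and fibre directions pick up $t^{-1}$ from $g_{\sk}^{-1}$ and $t^{2}$ from the factor $t$ in $\beta_t$. The organisation differs, though.

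The paper proves an abstract claim for any homothety $\alpha$ of special K\"ahler manifolds with $\alpha^*g'=t\,g$ and $\alpha^*\nabla'=\nabla$, and checks it pointwise on $V\times V^*$ in the $\nabla$-splitting. It imports (\ref{eq-alpha}) from \cite{m-s-w-w}, leaves $\alpha_t^*\nabla=\nabla$ implicit, and writes out only the metric identity.

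You instead work in conjugate special (period) coordinates. There the homogeneity $\sqrt{t\phi}=t^{1/2}\sqrt{\phi}$ gives all the inputs at once: the conic property, linearity of $\alpha_t$ in the flat coordinates $(x,y)$ (hence $\alpha_t^*\nabla=\nabla$), and invariance of $\tau$. Then $\beta_t$ is the uniform dilation by $t^{1/2}$ of $(x,y,p_x,p_y)$. Since every semiflat tensor has coefficients depending only on $\tau$ in these coordinates, $\beta_t^*g_{\sf}=t\,g_{\sf}$ and the invariance of $I,J_{\sf},K_{\sf}$ follow together. The paper's version is shorter and more general. Yours is tied to the Hitchin base but self-contained, and it verifies exactly the points the paper leaves implicit.

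Three corrections, none of which affects the main line.

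First, the aside that $\beta_t$ descends compatibly with $\Gamma$ is false. $\Gamma$ is $\nabla$-flat, so it is a constant lattice in the coordinates $(p_x,p_y)$. Your own computation then gives $\beta_t(\Gamma_\phi)=t^{1/2}\Gamma_{t\phi}$, which differs from $\Gamma_{t\phi}$ in general. As the paper remarks right after the lemma, $\beta_t$ does not descend to $T^*\B'/\Gamma\cong\M'$. Nothing requires it to: the lemma lives on $T^*\B'$, and later $g_{L^2}$ is lifted to $T^*\B'$ through the covering before being pulled back by $\beta_t$.

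Second, $g_{\sk}=\mathrm{Im}(\tau_{ij})\,dz_i\,d\bar z_j$, not $\mathrm{Im}(\tau_{ij})\,dx_i\,dx_j$. In the coordinates $(x,y)$ it involves both $dx$ and $dy$, with coefficients depending only on $\tau$, and that is all you use.

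Third, your fallback check invokes uniqueness of $g_{\sf}$ given $(I,\eta,g_{\sk})$; this should be stated for the special K\"ahler data including $\nabla$. The semiflat metric is built from the $\nabla$-splitting of $T(T^*\B')$, and $\nabla$ is not determined by $g_{\sk}$ and the complex structure (special connections come in circle families). With $\nabla$ included, that argument also needs $\alpha_t^*\nabla=\nabla$, which your coordinate computation supplies.
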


\begin{remark}
The action $\beta_t$ is not a lift  of the natural action on $\M'\cong T^*\B'/\Gamma$ given by $(E,\theta)\to (E,\sqrt{t}\theta)$. In fact, the map $\beta_t:T^*\B'\to T^*\B'$ does not descend to the quotient $T^*\B'/\Gamma \cong \M'$. As a matter of fact, the map $\beta_t$ is uniquely determined by the requirement that it is an extension of $\alpha_t$, and that it preserves the  complex structures $(I,J_\sf,K_\sf)$.
\end{remark}

We now pull back $(g_{L^{2}},I,J,K)$ by $\beta_t$ and get
$$
g^t=\beta_t^*g_{L^{2}},\quad I^t=\beta_t^* I,\quad J^t=\beta_t^* J ,\quad K^t=\beta_t^* K.
$$
As we already observed $I^t=I$. Furthermore, the following holds:
\begin{lemma}\label{lemma-conv} We have
$$
t^{-1}\cdot g^t \to g_{\sf} \quad \quad \text{and}\quad\quad J^t\to J_\sf
$$
in $C^\infty$ topology on compact subset of $T^*\B'$, when $t\to \infty$.
\end{lemma}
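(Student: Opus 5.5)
The idea is to transport the asymptotic estimate of Theorem \ref{theorem: asymptotic} through the scaling maps $\beta_t$, using Lemma \ref{lemma-beta}.

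\textbf{Step 1 (the metric).} Fix a compact set $K\subset T^*\B'$, and set $K_0=\pi(K)\subset\B'$. Since $\beta_t$ covers $\alpha_t$, the image $\beta_t(K)$ lies over $\alpha_t(K_0)=tK_0$. Let $W$ be the closed dilation invariant cone spanned by $K_0$. Then $tK_0\subset W(ct)$ with $c=\min_{K_0}\|\phi\|>0$. Theorem \ref{theorem: asymptotic} (lifted to $T^*\B'$, where both metrics are invariant under the deck action of $\Gamma$) gives $g_{L^2}=g_\sf(1+o(1))$ on $\beta_t(K)$. Pulling back by $\beta_t$ and using $\beta_t^*g_\sf=t\,g_\sf$ from (\ref{eq-beta}), we get
$$
t^{-1}g^t=t^{-1}\beta_t^*\big(g_\sf(1+o(1))\big)=g_\sf(1+o(1))
$$
uniformly on $K$. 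This is $C^0$ convergence.

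\textbf{Step 2 (upgrading to $C^\infty$).} To get $C^\infty$ convergence I would use that each $t^{-1}g^t$ is hyperkähler, hence Ricci flat, with respect to the fixed complex structure $I$. Two options:
\begin{itemize}
\item[(i)] Quote the stronger form of \cite{m-s-w-w} (and Fredrickson, Dumas--Neitzke), where the difference $g_{L^2}-g_\sf$ decays with all derivatives along the ends. Since $\beta_t$ contracts $g_\sf$-distances by the factor $\sqrt t$ in a homogeneous way, the derivative bounds transfer directly.
\item[(ii)] Argue intrinsically. The metrics $t^{-1}g^t$ are Kähler for $I$ and Ricci flat, and they are uniformly $C^0$-equivalent to $g_\sf$ on $K$. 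Their Kähler potentials relative to $g_\sf$ satisfy a complex Monge--Ampère equation $\det(g_\sf+i\partial\bar\partial u)=\det g_\sf$; the volume form is fixed because $\eta_I\wedge\bar\eta_I$ is preserved, as $\beta_t^*\eta_I=t\eta_I$. Evans--Krylov and Schauder theory then give uniform $C^{k}$ bounds, and Arzelà--Ascoli together with uniqueness of the $C^0$ limit gives $C^\infty$ convergence.
\end{itemize}
I expect this upgrade to be the main obstacle, and I would present (i) as the primary route, with (ii) as the self-contained justification.

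\textbf{Step 3 (the complex structure).} Once $t^{-1}g^t\to g_\sf$ in $C^\infty$, I would recover $J^t$ from the metric and the holomorphic symplectic form, which do not depend on $t$ after rescaling. We have $\omega_{J^t}+i\omega_{K^t}=\beta_t^*\eta_I=t\,\eta_I$, using that $\beta_t$ scales $\eta_I$ by $t$. This scaling follows from $\beta_t^*g_\sf=t\,g_\sf$ together with the preservation of $J_\sf,K_\sf$ in Lemma \ref{lemma-beta}. Hence $t^{-1}\omega_{J^t}=\re\eta_I=\omega_{J_\sf}$, and
$$
J^t=(t^{-1}g^t)^{-1}\circ(t^{-1}\omega_{J^t})=(t^{-1}g^t)^{-1}\circ\omega_{J_\sf}\longrightarrow g_\sf^{-1}\circ\omega_{J_\sf}=J_\sf
$$
in $C^\infty$ on $K$, up to the sign convention in $\omega(\cdot,\cdot)=g(J\cdot,\cdot)$.
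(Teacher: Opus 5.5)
Your Step 1, option (ii) of Step 2, and Step 3 follow the paper's argument. The paper gets $C^0$ convergence from Theorem \ref{theorem: asymptotic} together with $\beta_t^*g_{\sf}=t\,g_{\sf}$, then upgrades to $C^\infty$ using interior regularity of Ricci-flat K\"ahler metrics for the fixed complex structure $I=\beta_t^*I$. It quotes that regularity as a consequence of the Calabi--Yau $C^3$ estimate, via Proposition 3.1 of \cite{h-t}, and finally recovers $J^t$ from the metric. Your identity $t^{-1}\omega_{J^t}=\omega_{J_{\sf}}$, obtained from the shared form $\eta_I$ and $\beta_t^*\eta_I=t\,\eta_I$, is a more precise version of the paper's remark that $J^t$ is determined by $g^t$.

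Do not make option (i) the primary route, for two reasons.
\begin{enumerate}
\item The asymptotic results of \cite{m-s-w-w}, \cite{fredrickson}, \cite{d-n}, as stated, control $g_{L^2}-g_{\sf}$ evaluated on tangent vectors. They are $C^0$ statements, not derivative estimates.
\item Even with derivative decay, the bounds would not transfer directly. By (\ref{eq-beta}), $\beta_t$ \emph{expands} $g_{\sf}$-lengths by $\sqrt t$; it does not contract them. So a bound $|\nabla^k(g_{L^2}-g_{\sf})|_{g_{\sf}}\le\epsilon_k$ on $\beta_t(K)$ only yields $|\nabla^k(t^{-1}g^t-g_{\sf})|_{g_{\sf}}\le t^{k/2}\epsilon_k$ on $K$. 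You would need decay faster than $t^{-k/2}$, or scale-weighted estimates.
\end{enumerate}
The elliptic argument in (ii) avoids this problem, and it is exactly what the paper uses. Present (ii) as the proof, carried out with local potentials on small coordinate balls, since $K$ need not be contractible.
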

\begin{proof} It follows from 
Theorem \ref{theorem: asymptotic} and (\ref{eq-beta}) that  that $t^{-1}\cdot g^t \to g_{\sf}$ in $C^0$ sense on compact sets in  $T^*\B'$. 
In fact the convergence is in $C^\infty$ sense  due to the following lemma which is a consequence of the Calabi-Yau $C^3$ estimate. It is implicit in \cite{yau-1}, and in the present form can be recovered (for example) from Proposition 3.1 in \cite{h-t}. 
\begin{lemma}   Let $(M,I)$ be a complex manifold and let $g^t$ be a family of K\"ahler metrics with vanishing Ricci curvature such that $g^t\rightarrow g^\infty$ uniformly for some K\"ahler metric $g^\infty$. Then $g^t\rightarrow g^\infty$ in $C^\infty$ topology.
\end{lemma}
Since $J^t$ is uniquely determined by $g^t$ we find that $J^t\to J_\sf$ as well.
\end{proof}

\subsection{Metric of negative holomorphic sectional curvature on a fixed neighbourhood}
The following  lemma is readily deduced from a similar statement regarding the standard hyperk\"ahler structure on $T^*\C^n$. It is proved the appendix.

\begin{lemma}\label{lemma: curvature}
For each $\phi_0\in \B'$, with $\|\phi_0\|=1$, there are constants $C,\kappa>0$,  a neighbourhood 
$U\subset T^*\B'$ of the point $(\phi_0,0)\in  T^*\B'$, and a positive function $\varphi$ on $\pi(U)$,  with the following properties: 
\begin{itemize}
\item[(a)] the  $(1,1)$-form $\sqrt{-1}\partial_{J_\sf}\overline\partial_{J_\sf}(\varphi\circ\pi)$ is a K\"ahler form, and thus defines a K\"ahler metric (denoted $g_{J_{\sf}}$) on $U$, where $\pi:T^*\B'\to \B'$ is the  projection,
\vskip .1cm
\item[(b)] the  holomorphic sectional curvature of $g_{J_{\sf}}$ is  $<-\kappa$, 
\vskip .1cm
\item[(c)] $C^{-1} g_\sf < g_{J_{\sf}} < Cg_\sf$.
\end{itemize}
\end{lemma}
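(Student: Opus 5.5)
The plan is a perturbation argument. Holomorphic sectional curvature depends continuously on the $2$-jet of a K\"ahler metric, that is, on the $4$-jet of a K\"ahler potential. So it suffices to build, at the single point $p_0=(\phi_0,0)\in T^*\B'$, a function $\varphi$ on the base with two properties. First, $\sqrt{-1}\partial_{J_\sf}\overline\partial_{J_\sf}(\varphi\circ\pi)$ is positive definite at $p_0$. Second, the resulting metric has strictly negative holomorphic sectional curvature at $p_0$. Both conditions are open. Shrinking $U$ to a small neighbourhood with compact closure then gives (a) and (b). Property (c) follows automatically, since $g_{J_\sf}$ and $g_\sf$ are two smooth Riemannian metrics on $\overline U$.

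\textbf{Step 1: linear model.} Work in special holomorphic coordinates on $\B'$ near $\phi_0$ and the induced linear coordinates on $T^*\B'$. At the point $p_0$ the hyperk\"ahler structure $(g_\sf,I,J_\sf,K_\sf)$ agrees to first order with a constant (flat) hyperk\"ahler structure on $T^*\C^n\cong\Ha^n$. For this flat model I would write down $J$-holomorphic linear coordinates $w_1,\dots,w_{2n}$ explicitly. The real base coordinates $x^1,\dots,x^{2n}$ (real and imaginary parts of the base coordinates $z$) are then real-linear combinations of the $w_a$ and $\bar w_a$. I then pick a positive quadratic form $Q$ in $x$ such that $\sqrt{-1}\partial_J\overline\partial_J(Q\circ\pi)$ is positive definite. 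This is an explicit linear-algebra check in the model. For instance, in the quaternionic model a coordinate $z$ takes the form $w_1+\bar w_2$, and $\sqrt{-1}\partial\overline\partial|w_1+\bar w_2|^2=\sqrt{-1}(dw_1\wedge d\bar w_1+dw_2\wedge d\bar w_2)$. The essential point is that $d\pi$ is a real-linear surjection, so $\ker d\pi$ is a real subspace containing no $J$-complex line. Positivity at $p_0$ for the true $J_\sf$ follows, since it only involves first-order data at $p_0$.

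\textbf{Step 2: making the curvature negative.} Set $\varphi=Q+cQ^2$ with $c>0$ large, with coordinates centred at $\phi_0$ so that $Q(\phi_0)=0$. The term $cQ^2$ has vanishing Hessian at $p_0$, so it does not change the metric at that point. Its fourth derivatives enter the curvature formula
\[
R_{a\bar b c\bar d}=-\varphi_{a\bar b c\bar d}+g^{e\bar f}\varphi_{a c\bar f}\varphi_{\bar b\bar d e}.
\]
The quantity $\partial\overline\partial\partial\overline\partial(Q^2\circ\pi)$, evaluated on $(v,\bar v,v,\bar v)$, is a positive multiple of $\big(\sqrt{-1}\partial\overline\partial (Q\circ\pi)(v,\bar v)\big)^2$, up to terms involving derivatives of $J_\sf$ (Christoffel-type corrections), which are independent of $c$. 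Hence the holomorphic sectional curvature at $p_0$ equals $-c\cdot(\text{positive})+O(1)$ uniformly over unit vectors, and it is negative for $c$ large. This mirrors how $-\log(1-|w|^2)=|w|^2+\tfrac12|w|^4+\dots$ yields negative curvature.

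\textbf{Main obstacle.} The hard step is verifying in Step 2 that the $c$-dependent quartic term really dominates. Because $\varphi\circ\pi$ is not a function of the $J_\sf$-holomorphic coordinates in a holomorphic way, the fourth derivatives pick up contributions from the non-constancy of $J_\sf$ relative to the linear model. I would handle this by choosing $J_\sf$-holomorphic coordinates adapted at $p_0$, that is, agreeing with the $w_a$ to second order. In such coordinates only the leading quartic term scales with $c$, and its positivity reduces to the Step 1 positivity squared. Because the statement is local, one could alternatively use real-analyticity of $g_\sf$ and invoke the flat statement on $T^*\C^n$ with a small perturbation, as the paper suggests.
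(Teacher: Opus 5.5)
Your argument is correct, but it takes a different route from the paper's.

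\textbf{The two routes.} The paper writes down an explicit global model on flat $T^*\C^n$. The potential $\varphi_0=-\sum_k\big[\log\cos(\re z_k)+\log\cos(\mathrm{Im}\, z_k)\big]$ has $J_0$-Levi form equal to the product of complete hyperbolic metrics on $2n$ $J_0$-holomorphic strips. The resulting metric is therefore isometric to a product of hyperbolic planes, with holomorphic sectional curvature pinched between two negative constants. The paper then transfers this to $(g_{\sf},J_{\sf})$, citing only that the two structures agree at $(b,0)$. You skip the model entirely: you work with jets at $p_0$, take $\varphi=Q+cQ^2$, and conclude by openness.

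\textbf{What each buys.} The paper's route gives a concrete metric with uniform curvature pinching on the whole model domain. Yours needs no model, and uses only two facts: $J_{\sf}$ is integrable, and the fibres are totally real for $J_{\sf}$. Yours is also more robust. The curvature of $\sqrt{-1}\partial_{J_{\sf}}\overline\partial_{J_{\sf}}(\varphi\circ\pi)$ at $p_0$ depends on the $2$-jet of $J_{\sf}$ there. So agreement of values at $(b,0)$ does not by itself transfer the model's bound; a zoom-in rescaling, under which the semiflat structure becomes flat, is implicitly needed. Your $c$-term dominates whatever that jet is.

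\textbf{Corrections to your write-up.}

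First, $\ker d\pi$ contains no $J_{\sf}$-complex line, but not because $d\pi$ is surjective. The reason is that the fibres are Lagrangian for $\omega_{J_{\sf}}$ (they are Lagrangian for $\eta_I=\omega_{J_{\sf}}+i\omega_{K_{\sf}}$). Hence $J_{\sf}$ maps vertical vectors into their $g_{\sf}$-orthogonal complement. So the Levi form at $p_0$ is $Q(d\pi v)+Q(d\pi J_{\sf}v)>0$ for every positive definite $Q$ centred at $\phi_0$.

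Second, your ``main obstacle'' is not one. In any $J_{\sf}$-holomorphic chart, $u=Q\circ\pi$ satisfies $u(p_0)=0$ and $du(p_0)=0$. Two consequences follow.
\begin{itemize}
\item All third derivatives of $u^2$ vanish at $p_0$. You should say this explicitly, since otherwise the positive term $g^{e\bar f}\varphi_{ac\bar f}\varphi_{\bar b\bar d e}$ in your curvature formula could grow like $c^2$.
\item Writing $u_{vv}=u_{ac}v^av^c$ and $u_{v\bar v}=u_{a\bar b}v^a\bar v^b$, one has exactly $(u^2)_{a\bar b c\bar d}v^a\bar v^bv^c\bar v^d=2|u_{vv}|^2+4u_{v\bar v}^2\ge 4u_{v\bar v}^2$ at $p_0$.
\end{itemize}
This is not ``a positive multiple of the Levi form squared'' as you wrote, but the extra term has the right sign. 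There are no correction terms from the variation of $J_{\sf}$: all such contributions sit in the $c$-independent quantities $u_{a\bar b c\bar d}$ and $u_{ac\bar f}$. No adapted coordinates are needed.

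Finally, add a constant to $\varphi$ to make it strictly positive, as the statement requires.
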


\subsection{Proof of Proposition \ref{proposition: curvature}}

Fix $\phi_0\in \B'$, with $\|\phi_0\|=1$. Let $U$ be the neighbourhood, and $\varphi$   the  positive function on $\pi(U)$ from Lemma \ref{lemma: curvature}. Furthermore, let  $V\subset T^*\B'$ be a neighbourhood of the point 
$(\phi_0,0)\in  T^*\B'$ so that the closure of $V$ is contained in $U$.
Set $U_t=\beta_t(U)$, $V_t=\beta_t(V)$, and $\varphi_t=\varphi\circ \beta^{-1}_t$. Then $V_t\subset U_t \subset T^*\B'$  are neighbourhoods of the point $(t\phi_0,0)\in  T^*\B'$, and $\varphi_t$ a positive valued function on $\pi(U_t)$.

\begin{claim}\label{claim: curvature-1} Let $C,\kappa>0$ be the constants from Lemma \ref{lemma: curvature}. Then for every large enough $t$ we have
\begin{itemize}
\item[(a)] the  $(1,1)$-form $\sqrt{-1}\partial_{J}\overline\partial_{J}(\varphi_t\circ\pi)$ is a K\"ahler form, and thus defines a K\"ahler metric (denoted $g_{J}$) on $V_t$, where $\pi:T^*\B'\to \B'$ is the  projection,
\vskip .1cm
\item[(b)] the  holomorphic sectional curvature of $g_{J}$ is  $<-\kappa$ on $V_t$, 
\vskip .1cm
\item[(c)] $C^{-1} g_{L^{2}} < tg_{J} < Cg_{L^{2}}$ on $V_t$.
\end{itemize}
\end{claim}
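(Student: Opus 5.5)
The natural move is to pull everything back by $\beta_t$ and reduce the claim to a statement on the fixed set $V$. By construction $\beta_t$ maps $V$ onto $V_t$. We also have $\varphi_t\circ\pi\circ\beta_t=\varphi\circ\pi$, because $\pi\circ\beta_t=\alpha_t\circ\pi$ and $\varphi_t=\varphi\circ\alpha_t^{-1}$ on the base (reading $\beta_t^{-1}$ in the definition of $\varphi_t$ as $\alpha_t^{-1}$ on $\pi(U_t)$). Using $J^t=\beta_t^*J$ and naturality of $\partial\overline\partial$, we get
$$
\beta_t^*\Big(\sqrt{-1}\partial_J\overline\partial_J(\varphi_t\circ\pi)\Big)=\sqrt{-1}\partial_{J^t}\overline\partial_{J^t}(\varphi\circ\pi)
$$
on $V$. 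So $g_J$ on $V_t$ is isometric, via $\beta_t$, to the metric $g_{J^t}$ on $V$ defined by the form $\sqrt{-1}\partial_{J^t}\overline\partial_{J^t}(\varphi\circ\pi)$. Holomorphic sectional curvature and Kähler-ness are invariant under this biholomorphic isometry $(V,J^t,g_{J^t})\to(V_t,J,g_J)$. Statements (a) and (b) therefore reduce to the corresponding properties of $g_{J^t}$ on $V$.

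Next I would use Lemma \ref{lemma-conv}: $J^t\to J_\sf$ in $C^\infty$ on the compact set $\overline V\subset U$. Since $\varphi\circ\pi$ is a fixed smooth function, the $(1,1)$-form $\sqrt{-1}\partial_{J^t}\overline\partial_{J^t}(\varphi\circ\pi)=\tfrac12 d\big(d(\varphi\circ\pi)\circ J^t\big)$ converges in $C^\infty(\overline V)$ to $\sqrt{-1}\partial_{J_\sf}\overline\partial_{J_\sf}(\varphi\circ\pi)$. The associated symmetric tensors $g_{J^t}=\omega(\cdot,J^t\cdot)$ then converge in $C^\infty$ to $g_{J_\sf}$. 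By Lemma \ref{lemma: curvature}(a),(c), $g_{J_\sf}$ is positive definite and uniformly comparable to $g_\sf$ on $\overline V$. Hence for large $t$, $g_{J^t}$ is positive definite and $J^t$-invariant, hence Kähler (it is closed as an exact form), which gives (a).

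Curvature involves two derivatives of the metric, so $C^2$ convergence of $(g_{J^t},J^t)$ gives uniform convergence of the holomorphic sectional curvatures on the unit sphere bundle over $\overline V$. Lemma \ref{lemma: curvature}(b) gives the strict bound $<-\kappa$ for $g_{J_\sf}$. By compactness of $\overline V$ the limit actually satisfies $\le -\kappa-\epsilon$ for some $\epsilon>0$, after shrinking $\kappa$ slightly; I would allow this shrinking, or else note the strict inequality is preserved once a uniform margin is available. That gives (b) for large $t$.

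For (c), again pulling back by $\beta_t$, the desired inequality is $C^{-1}t^{-1}g^t<g_{J^t}<Ct^{-1}g^t$ on $V$, where $g^t=\beta_t^*g_{L^2}$. By Lemma \ref{lemma-conv}, $t^{-1}g^t\to g_\sf$ uniformly on $\overline V$, and $g_{J^t}\to g_{J_\sf}$, which satisfies $C^{-1}g_\sf<g_{J_\sf}<Cg_\sf$. Uniform convergence on a compact set then yields the bounds for large $t$, with $C$ replaced by a slightly larger constant (say $2C$). The main technical obstacle is this bookkeeping of constants: the claim insists on the same $C,\kappa$ as Lemma \ref{lemma: curvature}. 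I would handle it either by using the strict inequalities together with compactness of $\overline V$ to gain a small margin, or by accepting slightly worse constants, which is harmless for Proposition \ref{proposition: curvature}.
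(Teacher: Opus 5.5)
Your proposal is correct and follows the paper's own argument. You pull back by $\beta_t$ to the fixed set $V$, using $\beta_t^*J=J^t$, $\beta_t^*g_J=g_{J^t}$ and $\varphi_t\circ\pi\circ\beta_t=\varphi\circ\pi$; you then invoke the $C^\infty$ convergence $J^t\to J_\sf$, $t^{-1}g^t\to g_\sf$ from Lemma \ref{lemma-conv} and compare with Lemma \ref{lemma: curvature} on $\overline V\subset U$.

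Your concern about the constants is settled by your first option. The strict inequalities of Lemma \ref{lemma: curvature} hold on $U\supset\overline V$ with $\overline V$ compact, so they have a uniform margin there, and the same $C$ and $\kappa$ survive for large $t$ without any shrinking.
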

\begin{proof}
Combining Lemma \ref{lemma-conv} with Lemma \ref{lemma: curvature} shows that for large enough $t$ the $(1,1)$-form $\sqrt{-1}\partial_{J^{t}}\overline\partial_{J^{t}}(\varphi \circ \pi)$ is a K\"ahler form, and it therefore  defines a K\"ahler metric  $g_{J^{t}}$ on $V$. Furthermore, the  holomorphic sectional curvature of $g_{J^{t}}$ is $<-\kappa$, and the inequalities $C g^t < g_{J^{t}} < C^{-1}g^t$ holds, for large enough $t$. Since $\beta_t^*J=J^t$, and $\beta_t^* g_J=g_{J^{t}}$, the claim follows.
\end{proof}

We are ready to finish the proof of Proposition \ref{proposition: curvature}. Firstly, there exists $r>0$ such that $B_{r}(\phi_0)=\{\phi\in \B:\, \|\phi-\phi_0\|<r\}$ is contained in $V\cap \B'$.
Thus,  the ball $B_{rt}(t\phi_0)$ is contained in $V_t\cap \B'$. Secondly, since the diameter of the fibre $\M_\phi=\pi^{-1}(\phi)$  is less than some constant $D$ for each $\phi \in B_{rt}(t\phi_0)$ (here $D$ does not depend on $t$ according to Lemma \ref{lemma: diam bound}), it follows that for every $q\in \M_\phi$ there exists a point $\wt{q}\in V_t$ such that $\pi(\wt{q})=q$ (recall that $T^*B_{rt}(t\phi_0)/\Gamma \cong \pi^{-1}\big(B_{rt}(t\phi_0)\big)$).

On the other hand, observe that the $(1,1)$-form $\sqrt{-1}\partial_{J}\overline\partial_{J}(\varphi_t\circ\pi)$ is well defined on $\pi^{-1}(B_{rt}(t\phi_0))$. Since $V_t$ contains a fundamental domain of the covering 
$T^*B_{rt}(t\phi_0)\to  \pi^{-1}\big(B_{rt}(t\phi_0)\big)$, we conclude that all three conditions  of the metric $g_J$ from Claim \ref{claim: curvature-1} which hold on $V_t$ also hold on  $\pi^{-1}\big(B_{rt}(t\phi_0)\big)$ when $t$ is large enough. This completes the proof of the proposition.

\appendix

\section{Proof of Lemma \ref{lemma-beta}}   
To prove the lemma, it suffices to prove the following more general claim.
\begin{claim}
    Let $\alpha: M\rightarrow M'$ be a diffeomorphism between special K\"ahler manifolds $(M,g,\nabla)$ and $(M',g',\nabla')$ such that
    \[\alpha^*(g')=t\cdot g,\quad\quad \alpha^*(\nabla')=\nabla.\]
    Define $\beta:T^*M\rightarrow T^*M'$ by
    \[\beta(z,p)=(\alpha(z),t\cdot(\alpha^{-1})^*(p)).\]
    Then $\beta$ respects the  three induced complex structures on $T^*M$ and $T^*M'$ respectively, and
    \[\beta^*(g_{\sf}')=t\cdot g_{\sf}.\]
\end{claim}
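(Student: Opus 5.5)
The plan is to work in flat Darboux coordinates adapted to the special Kähler structure and to check that $\beta$ acts linearly in these coordinates, with the right scaling. Recall (Freed) that a special Kähler manifold $(M,g,\nabla)$ with Kähler form $\omega$ has $\nabla$ flat, torsion free and symplectic. Locally there are $\nabla$-flat Darboux coordinates $(x^1,\dots,x^n,y_1,\dots,y_n)$ on $M$, and these give linear coordinates $(x,y,p,q)$ on $T^*M$. The semiflat hyperkähler structure is then given by explicit formulas in $(x,y,p,q)$. On $TM\cong T^*M$ (identified via $\omega$), the $\nabla$-connection splits $T(T^*M)=H\oplus V$, with $H\cong TM$ and $V\cong T^*M\cong TM$. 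The formulas are
\[
g_{\sf}=g\oplus g^{-1},\qquad I=I_M\oplus (I_M^{-1})^*,\qquad J_{\sf}\big|_{H\to V}=\omega^\flat\ (\text{suitably normalised}),\qquad K_{\sf}=IJ_{\sf}.
\]
The exact constants do not matter. What matters is that each structure is built naturally from the triple $(g,\omega,\nabla)$, using the horizontal/vertical splitting determined by $\nabla$.

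\textbf{Step 1: $\beta$ respects the splitting.} The hypothesis $\alpha^*\nabla'=\nabla$ means that $\alpha$ maps $\nabla$-flat coordinates to $\nabla'$-flat coordinates. Hence $\alpha$ is affine in these coordinates, and the cotangent lift $(\alpha^{-1})^*$ preserves the splitting $H\oplus V$. Multiplying fibres by $t$ is linear on fibres, so it also preserves both $H$ and $V$. Therefore $d\beta$ maps $H$ to $H'$ by $d\alpha$, and maps $V$ to $V'$ by $t\cdot(\alpha^{-1})^*$.

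\textbf{Step 2: scaling of each block.} Since $\alpha$ is a diffeomorphism with $\alpha^*g'=tg$ that preserves $\nabla$, and $I_M$ is $\nabla$-compatible (it is determined by $g$ and $\omega$), we also get $\alpha^*\omega'=t\omega$ and $\alpha^*I'_M=I_M$. We then compare the blocks one at a time.

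\emph{Horizontal block.} Here $\beta^*g'|_H=\alpha^*g'=tg$.

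\emph{Vertical block.} Here $\beta^*(g'^{-1})=t^2\,\alpha^*(g'^{-1})=t^2\cdot t^{-1}g^{-1}=t\,g^{-1}$, so $\beta^*g'_{\sf}=t\,g_{\sf}$.

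\emph{Complex structure $I$.} It preserves the blocks. On $H$ it is $I_M$, which is preserved. On $V$ it is the dual, and the scalar $t$ commutes with it.

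\emph{Complex structure $J_{\sf}$.} It interchanges $H$ and $V$ through $\omega^\flat$. So we must check
\[
t\,(\alpha^{-1})^*\circ\omega^\flat=\omega'^\flat\circ d\alpha .
\]
This identity is exactly $\alpha^*\omega'=t\omega$. The reverse map $V\to H$ scales correctly for the same reason.

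\emph{Complex structure $K_{\sf}$.} It is preserved because $K_{\sf}=IJ_{\sf}$.

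\textbf{Main obstacle.} The one step needing care is pinning down the precise coordinate form of $(g_{\sf},J_{\sf})$, that is, Freed's construction or \cite{m-s-w-w} §2.4.1. In particular one must confirm that $J_{\sf}$ is built from $\omega$ and the $\nabla$-splitting, and not from some other tensor that scales differently. I would first write everything in flat Darboux coordinates, where $g=\mathrm{Im}\,\tau$ with $\tau$ the prepotential Hessian. In these coordinates $\alpha$ becomes the linear map $x\mapsto \sqrt{t}\,x$ composed with an affine symplectic transformation, which makes the verification above a direct matrix computation.

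Finally, the claim applies to Lemma \ref{lemma-beta} by taking $\alpha=\alpha_t$. Dilations are affine in the flat coordinates given by periods, so $\alpha_t^*\nabla=\nabla$, and $\alpha_t^*g_{\sk}=t\,g_{\sk}$ holds by (\ref{eq-alpha}).
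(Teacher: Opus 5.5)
Your argument is essentially the paper's. Both use the $\nabla$-splitting $T(T^*M)\cong H\oplus V$ with $g_{\sf}=g\oplus g^{-1}$, and both observe that $\beta$ scales the horizontal block by $t$ and the vertical block by $t^2\cdot t^{-1}=t$. Your Step 1 makes explicit the splitting-preservation that the paper uses silently when it identifies $V'$ with $V$. The paper stops at the metric, so your check of $I$, $J_{\sf}$, $K_{\sf}$, which reduces $J_{\sf}$-compatibility to $\alpha^*\omega'=t\omega$, supplies what the paper only asserts.

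One correction is needed in that extra part. $\alpha^*\omega'=t\omega$ and $\alpha^*I'=I$ do not follow from $\alpha^*g'=tg$ and $\alpha^*\nabla'=\nabla$. Complex conjugation on $\C$ with its flat special K\"ahler structure preserves $g$ and $\nabla$ but reverses $I$, and then $\beta$ anticommutes with $I$. So holomorphy of $\alpha$ must be taken as an implicit hypothesis of the claim. It does hold in the application, since $\alpha_t(\phi)=t\phi$ is complex linear.
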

\begin{proof}
    This is a linear algebra statement. Let $a:V\rightarrow V'$ be an isomorphism between vector spaces  equipped with Hermitian metrics $g,g'$  such that $a^*(g')=t\cdot g$. Define $b:T^*V\rightarrow T^*V'$ by
    \[b(v,p)=(a(v),t\cdot(a^{-1})^*(p)).\]
    Then
    \[b^*(g_{\sf}')=t\cdot g_{\sf}.\]
    To see this, we identify $V$ and $V'$ via $a$, that is,  we assume $V'=V$, and $a=\id_V$. The two Hermitian metrics on $V$ satisfy $g'=t\cdot g$. Then the metrics $g^{-1}$ and $g'^{-1}$ on $V^*$ satisfy $g'^{-1}=t^{-1}\cdot g^{-1}$. 
    
On the other hand,  $g_{\sf}$ is defined by taking the product of $g$ and $g^{-1}$ on $T^*V\cong V\times V^*$, and likewise for $g'_{\sf}$. Since $b:T^*V\rightarrow T^*V$ is given by $b(v,p)=(v,t\cdot p)$, it follows that  
$b^*(g_{\sf}')=t\cdot g_{\sf}$ (note that on the level of 2-tensor $t\cdot p$ means scaling by $t^2$ in the fibre direction).
\end{proof}

\section{Proof of Lemma \ref{lemma: curvature}}

\subsection{Flat metric}
Consider $\C^{2n}$ with coordinates $(z_1,\cdots,z_n,w_1,\cdots,w_n)$, where $z_j,w_j\in \C$.
Equip $T^*\C^n \cong \C^{2n}$ with the standard hyperk\"ahler structure $(g_0,I_0,J_0,K_0)$. 
Define the cube $R^n\subset \C^n$ by  \[R^n=\left\{(z_1,\cdots,z_{n})\in\C^{n}\big|-\frac{\pi}{2}<\re(z_1),\mathrm{Im}(z_1),\cdots,\re(z_n),\mathrm{Im}(z_n)<\frac{\pi}{2}\right\}.\]

Define the infinite strips 
$$
S^1_k=\left\{-\frac{\pi}{2}<\re(z_k)<\frac{\pi}{2}\right\} \times \{\mathrm{Im}(w_k)=0\},
$$ 
and 
$$
S^2_k=\left\{-\frac{\pi}{2}<\mathrm{Im}(z_k)<\frac{\pi}{2}\right\} \times \{\re(w_k)=0\}.
$$
Then
\begin{equation}\label{eq-proda}
T^*R^n=\prod_{k=1}^{n} S^1_k \times S^2_k.
\end{equation}
Endow each strip $S^i_k$ with the complete hyperbolic metric, and let $g_{J_{0}}$ denote the induced product metric on $T^*R^n$  (with respect to product structure (\ref{eq-proda})).
Since each $S^i_k$ is $J_0$ holomorphic in $T^*R^n$, it follows that $g_{J_{0}}$ is a K\"ahler metric on the complex manifold $(T^*R^n,J_0)$. In particular, $(T^*R^n,g_{J_{0}})$ is isometric to $\Ha^{2n}$ which is the product of $2n$ hyperbolic planes $\Ha$.

Consider the following function $\varphi_0$ on $R^n$
\begin{align}\label{eq8}
    \varphi_0(z_1,\cdots,z_{n})=-\sum_{k=1}^n\left[\log\cos(\re(z_k))+\log\cos(\mathrm{Im}(z_k))\right].
\end{align}
Let  $\pi_0:T^*R^n \to R^n$ be the natural projection. By $\BB^n_s\subset \C^n$ we denote the  ball of radius $s$ centred at the origin $0$.

\begin{lemma}\label{lemma: curvature model} There exist constants $\kappa,C>0$ with the following properties. The $(1,1)$-form $\sqrt{-1}\partial_{J_0}\bar\partial_{J_0}(\varphi_0\circ\pi_0)$ is a K\"ahler form on $T^*R^n$. The corresponding K\"ahler metric is equal to $g_{J_0}$, and it has holomorphic sectional curvature $<-\kappa$. Moreover, the inequalities $C^{-1}g_0< g_{J_0}< C^{-1}g_0$ hold on $T^*\BB^n_1$.
\end{lemma}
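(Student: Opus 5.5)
The plan is to reduce everything to an explicit computation in J_0-holomorphic coordinates adapted to the product decomposition (\ref{eq-proda}). I write $z_k=x_k+iy_k$ and $w_k=p_k+iq_k$. With the sign convention for the standard hyperk\"ahler structure on $T^*\C^n\cong\C^{2n}$ chosen suitably (swapping $w_k\leftrightarrow \bar w_k$ if necessary), the functions
$$
u_k=x_k+ip_k,\qquad v_k=y_k+iq_k,\qquad k=1,\dots,n,
$$
form a global system of $J_0$-holomorphic linear coordinates on $\C^{2n}$. The first step is to check this directly from the definition of $J_0$ (it is a linear complex structure anticommuting with $I_0$). It then follows that $S^1_k$ is the $u_k$-line, cut down to the strip $|\re u_k|<\pi/2$, and $S^2_k$ is the $v_k$-line, cut down to $|\re v_k|<\pi/2$. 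Under these coordinates $T^*R^n$ is biholomorphic to the product of $2n$ vertical strips $\{|\re u|<\pi/2\}$, which matches (\ref{eq-proda}).

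The second step is to compute the potential. Since $\re u_k=x_k$ and $\re v_k=y_k$, we have
$$
\varphi_0\circ\pi_0=-\sum_{k=1}^n\big[\log\cos(\re u_k)+\log\cos(\re v_k)\big],
$$
which is a sum of functions each depending on a single holomorphic coordinate. For one variable $u=s+i\tau$ one has $\partial_u\partial_{\bar u}\,h(s)=\tfrac14 h''(s)$, and for $h(s)=-\log\cos s$ this gives $h''(s)=\sec^2 s$. Hence
$$
\sqrt{-1}\,\partial_{J_0}\bar\partial_{J_0}(\varphi_0\circ\pi_0)=\frac{\sqrt{-1}}{4}\sum_{k=1}^n\Big(\sec^2(\re u_k)\,du_k\wedge d\bar u_k+\sec^2(\re v_k)\,dv_k\wedge d\bar v_k\Big).
$$
This is positive definite, so it is a K\"ahler form. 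Each summand is a constant multiple of the complete hyperbolic metric on the strip $|\re u|<\pi/2$; this can be seen via the map $u\mapsto e^{iu}$ onto the right half-plane, or by computing the Gaussian curvature of $\sec^2 s\,|du|^2$, which is constant and negative. This identifies the K\"ahler metric with $g_{J_0}$, up to normalising the curvature of each hyperbolic factor, a normalisation that only affects constants.

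The third step is the curvature bound. Each factor of the product $\Ha^{2n}$ has constant curvature $-c<0$. For a unit vector $\xi=(\xi_1,\dots,\xi_{2n})$ in a product of K\"ahler curves, the holomorphic sectional curvature is $-c\sum_j|\xi_j|^4$. By Cauchy--Schwarz, $\sum_j|\xi_j|^4\ge 1/(2n)$, so the holomorphic sectional curvature is at most $-c/(2n)$, and we take $\kappa<c/(2n)$.

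Finally, for comparability on $T^*\BB^n_1$: there $|\re u_k|,|\re v_k|<1<\pi/2$, so $1\le\sec^2\le\sec^2(1)$. In the linear coordinates $(u,v)$ the flat metric $g_0$ is the Euclidean metric $\sum|du_k|^2+|dv_k|^2$ (up to a fixed constant), so we obtain $C^{-1}g_0<g_{J_0}<Cg_0$ with $C$ depending only on $\sec^2(1)$ and the normalisation; the statement's second inequality should read $<Cg_0$. The only genuinely delicate point is the first step, namely fixing conventions so that $u_k,v_k$ really are $J_0$-holomorphic and the strips $S^i_k$ are exactly the coordinate lines. Everything after that is a one-variable computation.
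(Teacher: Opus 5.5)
Your proposal is correct and follows the paper's route. Like the paper, you identify $\sqrt{-1}\partial_{J_0}\bar\partial_{J_0}(\varphi_0\circ\pi_0)$ with (a constant multiple of) the product of hyperbolic metrics on the $2n$ strips, read off the holomorphic sectional curvature of the product, and get comparability from the fact that $\BB^n_1$ is compactly contained in $R^n$. You supply the computations the paper leaves to the reader, and your bounds $\kappa<c/(2n)$ and $C^{-1}g_0<g_{J_0}<Cg_0$ are the correct versions of the paper's misstated ones ($1/n$ for a product of $2n$ planes, and $C^{-1}g_0$ on the right).

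The convention point you flag can be settled outright. Since $J_0$ preserves the strips and anticommutes with $I_0$, its holomorphic coordinates there are $x_k+ip_k$ and $y_k-iq_k$ (or the conjugate pair), not $x_k+ip_k$ and $y_k+iq_k$. This changes nothing, because $\varphi_0\circ\pi_0$ depends only on the real parts.
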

\begin{proof} We leave to the reader to check that $\sqrt{-1}\partial_{J_0}\bar\partial_{J_0}(\varphi\circ\pi)$ is a K\"ahler form, and that the corresponding K\"ahler metric  equals $g_{J_0}$. Recall that the holomorphic sectional curvature on the product Poincar\`e metric on $\mathbb{H}^{2n}$ varies in the range $[-1,-1/n]$. Thus, it suffices to choose $0<\kappa<1/n$. The last assertion follows from the fact that $\BB^n_1$ is compactly contained in $R^n$. 
\end{proof}

\subsection{Semiflat metric}
Let $B$ be a special K\"ahler manifold and equip $T^*B$ with the associated hyperk\"ahler structure $(g_\sf,I,J,K)$. 
Fix $b \in B$. Then there exists a small enough $s>0$, and a local holomorphic chart $f:\BB^n_s \to B$, with $f(0)=b$,   such that  $df_0: \C^n \to T_{b} B$ is unitary. The following lemma follows  from Lemma \ref{lemma: curvature model} and the fact that the values of  $(g_\sf,I,J,K)$, and the values of $f_*(g_0,I_0,J_0,K_0)$, agree at the point  $(b,0)\in T^*B$.

\begin{lemma}\label{lemma: curvature semiflat}
There exists a neighbourhood  $U\subset T^*B$ of the point $(b,0)\in  T^*B$ such that
the $(1,1)$-form $\sqrt{-1}\partial_J\bar\partial_J(\varphi\circ f^{-1} \circ \pi)$ is a K\"ahler form on $U$. The induced K\"ahler metric $g_J$ has holomorphic sectional curvature $<-\kappa$. Moreover, $C^{-1}g_{\sf}< g_J< C^{-1}g_{\sf}$.
\end{lemma}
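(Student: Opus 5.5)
The plan is to transport the flat model of Lemma \ref{lemma: curvature model} to $T^*B$ near $(b,0)$ and then argue that the relevant properties are open conditions. Those properties are: being a Kähler form, the bound $<-\kappa$ on holomorphic sectional curvature, and the comparability constants. Openness lets them survive a small perturbation of the structure.

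First, I would choose the chart so that it matches the model to first order. Take special (flat, $\nabla$-affine) holomorphic coordinates on $B$ near $b$. In such coordinates the semiflat hyperkähler structure on $T^*B$ is given by explicit formulas in the special Kähler metric $g$ and the flat connection $\nabla$. Choose $f:\BB^n_s\to B$ with $df_0$ unitary, and use the induced cotangent chart $T^*\BB^n_s\to T^*B$. Then at $(b,0)$ the tensors $(g_\sf,I,J,K)$ coincide with $f_*(g_0,I_0,J_0,K_0)$. I also want the first derivatives of $J$ to be small near $(b,0)$ after rescaling. To get this, I would use that the semiflat structure near the zero section depends only on the base point, together with the fibre coordinate through $g^{-1}$.

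Next comes a rescaling argument, which I regard as the heart of the proof. For small $\lambda>0$, precompose with the dilation $\delta_\lambda(z,w)=(\lambda z,\lambda w)$, normalised so that the pulled-back structure $\lambda^{-2}\delta_\lambda^*(g_\sf,I,J,K)$ converges in $C^\infty$ on compact subsets of $T^*R^n$ to the flat structure $(g_0,I_0,J_0,K_0)$ as $\lambda\to 0$. This convergence holds because all the tensors are smooth and agree with the flat ones at the origin. Now fix the compact set $\overline{T^*\BB^n_{1}}\cap\{|w|\le 1\}$; the fibre direction can be restricted, since only a neighbourhood $U$ of $(b,0)$ is needed. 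On this set, the form $\sqrt{-1}\partial_{J}\bar\partial_{J}(\varphi_0\circ\pi_0)$ depends continuously in $C^2$ on $J$. By Lemma \ref{lemma: curvature model} it is a Kähler form for $J_0$, with metric $g_{J_0}$ that is uniformly comparable to $g_0$ and has holomorphic sectional curvature at most $-1/n<-\kappa$. Positivity, the curvature bound with a slightly smaller $\kappa$, and comparability with a slightly larger $C$ are all open conditions, so they persist for $\lambda$ small. One point needs care: $\varphi_0\circ\pi_0$ is independent of the fibre, and $\pi$ commutes with the chart, so $\varphi_0\circ\pi_0$ pulls back to $\varphi\circ f^{-1}\circ\pi$ with $\varphi=\varphi_0$ rescaled. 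Also, $\partial_J\bar\partial_J$ of a $J$-independent function scales correctly under the dilation, and the holomorphic sectional curvature bound is unaffected by the factor $\lambda^{-2}$ up to a constant we can absorb; if necessary, I would replace $\varphi$ by a constant multiple to restore the normalisation.

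The main obstacle is this continuity step: holomorphic sectional curvature involves second derivatives of the metric, hence fourth derivatives of $\varphi_0$ and third derivatives of $J$. It therefore requires $C^3$ convergence of $J$ on the compact set. That convergence holds because the semiflat structure is smooth, although it is not flat. Having obtained the bounds for the rescaled structure on $T^*\BB^n_1$, I would set $U$ to be the image of this region under $f\circ\delta_\lambda$ and read off (a)–(c) for $g_J$ with respect to $g_\sf$.
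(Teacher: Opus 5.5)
Your proposal is correct. Its skeleton is the paper's: transport the flat model of Lemma \ref{lemma: curvature model} by a chart with $df_0$ unitary, use that $(g_{\sf},I,J,K)$ and $f_*(g_0,I_0,J_0,K_0)$ agree at $(b,0)$, and appeal to openness. The genuine difference is your blow-up by $\delta_\lambda$, which the paper's one-line proof does not contain, and it is not cosmetic.

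Pointwise agreement plus continuity does give (a) and (c). Since $d\varphi_0(0)=0$, the value of $\sqrt{-1}\partial_J\bar\partial_J(\varphi_0\circ f^{-1}\circ\pi)$ at $(b,0)$ depends only on $J(b,0)$ and the intrinsic Hessian at that critical point, and both match the model. The holomorphic sectional curvature at $(b,0)$, however, depends on the $3$-jet of $J$ and the $4$-jet of the potential, and at unit scale these need not match the model. In a holomorphic chart the $\nabla$-flat coordinates are nonlinear, with quadratic terms governed by the cubic form of the special K\"ahler structure. The potential therefore acquires cubic and quartic Taylor terms that enter the curvature, and pointwise agreement gives no control over them. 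Your rescaling makes all such discrepancies $O(\lambda)$ in $C^\infty$ on compacta, which is exactly what (b) needs.

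The price is that the potential becomes $\varphi_0(\lambda^{-1}f^{-1}(\cdot))$, or $\lambda^2$ times it, rather than $\varphi_0\circ f^{-1}$ with $df_0$ unitary. This is harmless, since Lemma \ref{lemma: curvature} only asks for some positive function $\varphi$.

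Two small points to tidy up.
\begin{itemize}
\item The factor $\lambda^{-2}$ belongs only on $g_{\sf}$. For the complex structures, $(\delta_\lambda^*J)(x)=J(\lambda x)$ already converges, with $k$-th derivatives scaled by $\lambda^k$, so your separate remark about first derivatives of $J$ is unnecessary.
\item The metric built from $J$ and the rescaled potential is isometric to the one built from $\delta_\lambda^*J$ and $\varphi_0$, so the curvature bound carries over with no factor at all. Only the comparison with $g_{\sf}$ picks up $\lambda^{-2}$. Multiplying $\varphi$ by $\lambda^2$ restores it and only makes the curvature more negative.
\end{itemize}
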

\begin{remark}
    In fact the same holds on $T^*(B\cap U)$, and not just on $U$, due to the fact that there is a group acting on $T^*(B\cap U)$ preserving the hyperk\"ahler structure and acting transitively on the fibres. But for our application we are content with the above lemma.
\end{remark}

Applying the previous lemma to the case $B=\B'$ yields the proof of Lemma \ref{lemma: curvature}

\end{document}